\newtheorem{theorem}{Theorem}[section]
\newtheorem{proposition}{Proposition}[section]
\newtheorem{remark}{Remark}[section]
\newtheorem{lemma}{Lemma}[section]
\numberwithin{equation}{section}
\begin{document}
\large
\begin{center}
   \textbf{\large{Bargmann transfoms associated with reproducing kernel Hilbert space\\ and application to Dirichlet spaces}}
\end{center}
\begin{center}
   \textbf{Nour eddine Askour $^{1,2,a}$ and  Mohamed Bouaouid $^{1,b}$}
\end{center}
\begin{center}
$^{1}$ Department of Mathematics, Sultan Moulay Slimane University, Faculty
		of Sciences and Technics, Beni Mellal, BP 523, 23000, Morocco.
\end{center}
\begin{center}
$\&$
\end{center}
\begin{center}
$^{2}$ Department of Mathematics, Mohammed V University, Faculty of Sciences, Rabat, P.O. Box 1014, Morocco.
\end{center}

\begin{center}
$^{a}$ n.askour@usms.ma \hspace{0.25cm} and \hspace{0.25cm}$^{b}$ bouaouidfst@gmail.com
\end{center}
\section*{Abstract}
The aim of the present paper is three folds. For a reproducing kernel Hilbert space $\mathcal{A}$ (R.K.H.S) and a $\sigma-$finite measure space $(M_{1},d\mu_{1})$ for which the corresponding $L^{2}-$space is a separable Hilbert space, we first build an isometry of Bargmann type as an integral transform from $L^{2}(M_{1},d\mu_{1})$ into $\mathcal{A}$. Secondly, in the case where there exists a $\sigma-$finite measure space $(M_{2},d\mu_{2})$ such that the Hilbert space $L^{2}(M_{2},d\mu_{2})$ is separable and $\mathcal{A}\subset L^{2}(M_{2},d\mu_{2})$ the inverse isometry is also given in an explicit form as an integral transform. As consequence, we recover some classical isometries of Bargmann type. Thirdly, for the classical Dirichlet space as R.K.H.S, we elaborate a new isometry of Bargmann type. Furthermore, for this Dirichlet space, we give a new characterization, as harmonic space of a single second order elliptic partial differential operator for which, we present some spectral properties. Finally, we extend the same results to a class of generalized Bergman-Dirichlet space.
\begin{description}
  \item[Keywords:] 	Hilbert spaces with reproducing kernels; Dirichlet spaces; Orthogonal polynomials; Integral transforms; Partial differential operators; Spectral theory.
  \item[2010 Mathematics Subject Classification:] 46E22; 31C25; 33C45; 65R10; 47F05; 58C40.
\end{description}
\newpage
\tableofcontents
\newpage
\section{Introduction}
The classical Bargmann transform is an integral transform between the Schr\"{o}dinger space of square integrable functions and the Fock space of holomorphic functions. It was introduced by Bargmann \cite{Bar} in 1961 giving an isomorphism between two formulations of quantum mechanics
 \begin{align}\label{T1.1}
 \nonumber B\hspace{0.2cm}:L^{2}(\mathbb{R},& \hspace{0.2cm}dx)\longrightarrow \mathcal{A}^{2}(\mathbb{C})\subset L^{2}(\mathbb{C},
 \hspace{0.2cm}e^{-\mid z\mid^{2}}d\lambda(z))\\
 &\varphi\longmapsto B[\varphi](z):=\pi^{-\frac{3}{4}}\int_{-\infty}^{+\infty}\exp(-\frac{x^{2}}{2}+\sqrt{2}xz-\frac{z^{2}}{2})\varphi(x)dx,
 \end{align}
 where  $\mathcal{A}^{2}(\mathbb{C})$ is the Bargmann-Fock space  of the holomorphic functions integrable with respect to the Gaussian measure $e^{-\mid z\mid^{2}}d\lambda(z)$. Here, $d\lambda(z)$ is the ordinary area measure. The inner product in $\mathcal{A}^{2}(\mathbb{C})$ is inherited from $L^{2}(\mathbb{C},\hspace{0.2cm}e^{-\mid z\mid^{2}}d\lambda(z))$.\\
From then on, the Bargmann transform becames a powerful tool in mathematical analysis and mathematical physics. For this transform, several generalizations, using different approaches, have been given. Based on the fact that the Bargmann transform arises as the isometry part in the polar decomposition of the restriction operator, from holomorphic functions on complex space to functions on real space, Brian Hall has generalized the Bargmann transform to the compact Lie group \cite{Hal4,Hal5}. In the same perspective, we can also quote the following reference \cite{Ola}. The compact case was also generalized to the case of a special class of compact symmetric spaces by \cite{Hal2}. In the same way, Matthew Stenzel has given a straight forward generalization of the Bargmann transform to the special class of complex Riemann manifolds including compact symmetric spaces \cite{Ste}. In the case of complex type, the Bargmann transform associated with non compact symmetric spaces was also discussed by Brian Hall \cite{Hal1}. Here, in this work, based on the idea that the Bargmann space is a reproducing kernel Hilbert space (R.K.H.S) \cite{} and the integral kernel of the Bargmann transform appear as a generating function of the Hermite polynomials which is an orthogonal system of the Hilbert space $L^{2}(\mathbb{R},e^{-x^{2}}dx)$, we purpose a simple generalization of the Bargmann transform in the frame of R.K.H.S theory. Precisely, we will be concerned with a separable R.K.H.S $\mathcal{A}$ which will plays the same role as the Bargmann space $\mathcal{A}^{2}(\mathbb{C})$ and a separable Hilbert space $H=L^{2}(M_{1},d\mu_{1})$ which will plays the same role of the space $L^{2}(\mathbb{R},e^{-x^{2}}dx)$. More precisely, we have the following result.
\begin{theorem}\label{E11} Let $M_{2}$ be an arbitrary (non-empty) set and let $\mathcal{F}(M_{2})$ the space of all complex valued functions on $M_{2}$. Let $(M_{1},d\mu_{1})$ be a measure space for which the corresponding $L^{2}(M_{1},d\mu_{1})$ space is a separable Hilbert space and $\mathcal{A}\subset \mathcal{F}(M_{2})$ being a reproducing kernel Hilbert space (R.K.H.S). If $\mathcal{B}_{1}=(\varphi_{j})_{j\in\mathbb{Z}^{+}}$ and $\mathcal{B}_{2}=(\psi_{j})_{j\in\mathbb{Z}^{+}}$ be two orthonormal basis of $L^{2}(M_{1},d\mu_{1})$ and $\mathcal{A}$, respectively, then the following statements holds.

\begin{description}
  \item[i)] For each fixed $z\in M_{2}$, the series
  \begin{align}\label{E1.1}
  &K(z,w)=\displaystyle{\sum_{j\in\mathbb{Z}^{+}}}\overline{\varphi_{j}(w)}\psi_{j}(z),\hspace{0.25cm}w\in M_{1},
  \end{align}
  converges a.e$-d\mu_{1}$.
\item[ii)] The following integral transform
\begin{align}\label{E1.2}
		\nonumber&L^{2}(M_{1},d\mu_{1})\longrightarrow \mathcal{A}\\
		&\varphi\longmapsto B[\varphi](z):=\int_{M_{1}}K(z,w)\varphi(w)d\mu_{1}(w),\hspace{0.25cm}z\in M_{2},
\end{align}
defines a unitary isomorphism from $L^{2}(M_{1},d\mu_{1})$ into the R.K.H.S $\mathcal{A}$.
\end{description}
Moreover, if the measure $d\mu_{1}$ is a $\sigma-$finite and the set $M_{2}$ is also equipped with a  $\sigma-$finite measure $d\mu_{2}$ such that the R.K.H.S $\mathcal{A}$ is a subspace of $L^{2}(M_{2},d\mu_{2})$. Then, the inverse isomorphism $B^{-1}$ is also given by the following integral transform
 \begin{align}\label{E1.3}
		\nonumber&\mathcal{A} \longrightarrow L^{2}(M_{1},d\mu_{1})\\
	&\psi\longmapsto B^{-1}[\psi](w):=\int_{M_{2}}\overline{K(z,w)}\psi(z)d\mu_{2}(z),
\hspace{0.25cm}w\in M_{1}.
\end{align}
\end{theorem}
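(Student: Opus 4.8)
The plan is to prove the three assertions in turn, the whole argument resting on the single structural fact that an orthonormal basis of a R.K.H.S.\ is square-summable at every point of the underlying set.

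For \textbf{i)} I would argue as follows. Since $\mathcal{A}\subset\mathcal{F}(M_2)$ is a R.K.H.S., for each fixed $z\in M_2$ the evaluation functional $\psi\mapsto\psi(z)$ is bounded and is therefore represented by the reproducing kernel $K_{\mathcal A}(z,\cdot)\in\mathcal A$, so that $\psi_j(z)=\langle\psi_j,K_{\mathcal A}(z,\cdot)\rangle_{\mathcal A}$. Bessel's identity for the orthonormal basis $\mathcal B_2$ then yields $\sum_{j\in\mathbb{Z}^+}|\psi_j(z)|^2=\|K_{\mathcal A}(z,\cdot)\|_{\mathcal A}^2=K_{\mathcal A}(z,z)<\infty$. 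Thus the coefficient sequence $(\psi_j(z))_j$ lies in $\ell^2(\mathbb{Z}^+)$, and since $\mathcal B_1$ is an orthonormal basis of $L^2(M_1,d\mu_1)$ the partial sums $S_N(\cdot)=\sum_{j\le N}\overline{\varphi_j(\cdot)}\,\psi_j(z)$ satisfy $\|S_N-S_M\|^2_{L^2}=\sum_{M<j\le N}|\psi_j(z)|^2$ and hence form a Cauchy sequence in $L^2(M_1,d\mu_1)$. So $S_N$ converges in $L^2$, and passing to a subsequence gives convergence $d\mu_1$-a.e., which is exactly \eqref{E1.1}. I would record for later use that $K(z,\cdot)\in L^2(M_1,d\mu_1)$ with $\|K(z,\cdot)\|^2_{L^2}=K_{\mathcal A}(z,z)$.

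For \textbf{ii)} I would use the $L^2$-identity $\overline{K(z,\cdot)}=\sum_j\overline{\psi_j(z)}\,\varphi_j$ together with continuity of the inner product to compute, for every $z\in M_2$, that $B[\varphi](z)=\sum_j\langle\varphi,\varphi_j\rangle_{L^2}\,\psi_j(z)$. Hence $B$ is precisely the linear extension of the correspondence $\varphi_j\mapsto\psi_j$ between the two orthonormal bases. Parseval's identity on $L^2(M_1,d\mu_1)$ and on $\mathcal A$ gives $\|B[\varphi]\|_{\mathcal A}^2=\sum_j|\langle\varphi,\varphi_j\rangle|^2=\|\varphi\|^2_{L^2}$, so $B$ is isometric; that $B[\varphi]$ genuinely lies in $\mathcal A$ follows because $\sum_j\langle\varphi,\varphi_j\rangle\psi_j$ converges in the norm of $\mathcal A$ and, $\mathcal A$ being a R.K.H.S., norm convergence forces pointwise convergence to the same limit. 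Surjectivity is immediate, since $\psi=\sum_j b_j\psi_j\in\mathcal A$ is the image of $\varphi=\sum_j b_j\varphi_j$; therefore $B$ is a unitary isomorphism.

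For the inverse formula, since $B$ is unitary we have $B^{-1}=B^{\ast}$, so it suffices to verify $\langle B[\varphi],\psi\rangle_{\mathcal A}=\langle\varphi,\widetilde B[\psi]\rangle_{L^2(M_1)}$ with $\widetilde B[\psi](w):=\int_{M_2}\overline{K(z,w)}\psi(z)\,d\mu_2(z)$. Reading $\mathcal A\subset L^2(M_2,d\mu_2)$ as an isometric inclusion, I would write $\langle B[\varphi],\psi\rangle_{\mathcal A}=\int_{M_2}\big(\int_{M_1}K(z,w)\varphi(w)\,d\mu_1(w)\big)\overline{\psi(z)}\,d\mu_2(z)$, exchange the order of integration (the two $\sigma$-finiteness hypotheses are present precisely to license this Fubini step), and then recognise the inner factor as $\overline{\widetilde B[\psi](w)}$, giving \eqref{E1.3}. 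The hard part is exactly this exchange, because the double integral is not absolutely convergent in the naive sense: Cauchy–Schwarz in $w$ bounds the inner integral by $\sqrt{K_{\mathcal A}(z,z)}\,\|\varphi\|_{L^2}$, yet $\int_{M_2}K_{\mathcal A}(z,z)\,d\mu_2(z)=\sum_j\|\psi_j\|_{L^2(M_2)}^2=+\infty$, and symmetrically $\overline{K(\cdot,w)}$ need not lie in $L^2(M_2,d\mu_2)$ for fixed $w$, since the evaluations of $\mathcal B_1$ are generally not square-summable ($L^2(M_1,d\mu_1)$ is not a R.K.H.S.). To make this rigorous I would work with the truncated kernels $K_N(z,w)=\sum_{j\le N}\overline{\varphi_j(w)}\psi_j(z)$, for which Fubini is trivial: one checks $\int_{M_1}K_N(z,w)\varphi(w)\,d\mu_1(w)=P_N B[\varphi](z)$ and $\int_{M_2}\overline{K_N(z,w)}\psi(z)\,d\mu_2(z)=Q_N B^{-1}[\psi](w)$, where $P_N,Q_N$ are the orthogonal projections onto $\mathrm{span}(\psi_0,\dots,\psi_N)$ and $\mathrm{span}(\varphi_0,\dots,\varphi_N)$. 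Since $P_NB[\varphi]\to B[\varphi]$ in $\mathcal A\subset L^2(M_2)$ and $Q_NB^{-1}[\psi]\to B^{-1}[\psi]$ in $L^2(M_1)$, passing to the limit in the legitimate identity $\langle P_NB[\varphi],\psi\rangle=\langle\varphi,Q_NB^{-1}[\psi]\rangle$ yields $B^{-1}=\widetilde B$ and simultaneously clarifies that the integral in \eqref{E1.3} is to be understood as this limit, which is where the full-kernel formula acquires its precise meaning.
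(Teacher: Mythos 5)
Your argument for i) and ii) is essentially the paper's own: both hinge on the identity $\sum_{j}|\psi_j(z)|^2=K_{\mathcal A}(z,z)<\infty$ (you obtain it from the reproducing property plus Parseval; the paper cites Papadakis' expansion theorem, which is the same fact), both realize $K(z,\cdot)$ as an $L^2(M_1,d\mu_1)$-limit, and both identify the integral transform $B$ with the basis-exchange unitary $\varphi_j\mapsto\psi_j$ --- the paper merely interposes the Riesz representation theorem where you use continuity of the inner product, a cosmetic difference. The genuine divergence is in the inverse formula. The paper also sets $B^{-1}=B^{*}$, but then derives \eqref{E1.3} by interchanging the two integrations with a bare appeal to Fubini's theorem, never verifying absolute integrability of $(z,w)\mapsto K(z,w)\varphi(w)\overline{\psi(z)}$ on $M_1\times M_2$; as you correctly observe, this hypothesis cannot be extracted from the natural Cauchy--Schwarz estimate, since $\int_{M_2}K_{\mathcal A}(z,z)\,d\mu_2(z)=\sum_j\|\psi_j\|^2_{L^2(M_2,d\mu_2)}$ diverges whenever $\mathcal A$ is infinite-dimensional, and $K(\cdot,w)$ for fixed $w$ need not belong to $L^2(M_2,d\mu_2)$ at all. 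Your substitute --- the truncated kernels $K_N$, the exact identity $\langle P_N B[\varphi],\psi\rangle=\langle\varphi,Q_N\widetilde{B}[\psi]\rangle$, and a passage to the limit --- closes this hole rigorously, at the price of reading the integral in \eqref{E1.3} as a limit of truncated integrals rather than as an a priori convergent Lebesgue integral; the paper's route buys the stronger literal reading of \eqref{E1.3}, but only via an unjustified step. One caveat you share with the paper: what is actually proved in i) is $L^2$-convergence of the kernel series, hence a.e.\ convergence of a subsequence of its partial sums; a.e.\ convergence of the full series, as literally asserted, would require an additional argument of Rademacher--Menshov type.
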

There are many ways of computing the integral kernel appearing in the Bargmann transform. In particular, this involves the Blattner-Kostant-Sternberg theory \cite{Ola}. An other way is to use theory of Hermite functions and their generating function \cite{Bar,Zhu1}. Based on this last idea, various authors have introduced new isometries considered as transformations of Bargmann type. For more details, we cite \cite{Elk,Elw,Zou3,Zou1,Zou4,Zou2}.\\
A closed inspection shows that the last cited authors have used the same technical method to deal with case by case theirs isometries of Bargmann type. However, these cases can be brought in an unified framework recovering the classical analysis, $q-$analysis as well as the quaternionic analysis.  This is the purpose of the first part of the present work. Concretely, we first elaborate a general result giving an isometry of Bargmann type and its inverse, using simple geometric intrinsic properties of Hilbert space. This is the subject of the above theorem $(\ref{T1.1})$.\\
 In one hand, these theorems allowed us to recover the results of the last authors being cited above. Effectively, to not overloud the text, we will restrict our treatment to the following three examples: the classical Bargmann transform \cite{Bar}, the second Bargmann transform \cite{Bar} and the generalized Bargmann transform \cite{Elw}.\\
On the other hand, theorem  $(\ref{T2.2})$, with the help of some technical summability methods, allowed us to define and construct Bargmann transforms associated with the Dirichlet space and the generalized Bergmann-Dirichlet space. Parallel to the construction of these Bargmann transforms corresponding to the Dirichlet space and to generalized  Dirichlet space, we characterize these spaces as harmonic spaces of some elliptic second order partial differential operators.\\
 Before going ahead, we give a concise picture of  theses characterizations.\\
 We recall that the Dirichlet space (\cite{Elf}) $\mathcal{D}$ on the unit disk $\mathbb{D}$ is the vector space of holomorphic functions on $\mathbb{D}$ for which the Dirichlet integral
 \begin{align}\label{E1.5}
\mathbf{D}(f)=\frac{1}{\pi}\int_{\mathbb{D}}\mid f^{'}(z)\mid^{2}d\lambda(z),
\end{align}
is finite, where $d\lambda(z)$ is the Lebesgue area measure on $\mathbb{D}$. For this Dirichlet space, we prove the following characterization
\begin{align}\label{E1.6}
 \mathcal{D}=\{F\in D(\tilde{\Delta}),\hspace{0.25cm}\tilde{\Delta}F=0\},
 \end{align}
where $\tilde{\Delta}$ is the partial differential operator defined by
\begin{align}\label{E1.7}
 \tilde{\Delta}=-4(1-\mid z\mid^{2})[(1-\mid z\mid^{2})\frac{\partial^{2}}{\partial z\partial\overline{z}}-2\overline{z}\frac{\partial}{\partial\overline{z}}],
 \end{align}
 acting on the Hilbert space $L^{2}(\mathbb{D},d\lambda(z))$, with the dense domain
 \begin{align}\label{E1.8}
 D(\tilde{\Delta}):=\{F\in L^{2}(\mathbb{D},d\lambda(z)),\hspace{0.25cm}\tilde{\Delta}F\in L^{2}(\mathbb{D},d\lambda(z))\hspace{0.25cm}and\hspace{0.25cm}\frac{\partial F}{\partial z}\in L^{2}(\mathbb{D},d\lambda(z))\}.
 \end{align}
 Also, for the Dirichlet space viewed as Hilbert space endowed with the following scalar product
 \begin{align}\label{E1.9}
 <f,g>_{\mathcal{D}}=f(0)\overline{g(0)}+\int_{\mathbb{D}}f^{'}(z)\overline{g^{'}(z)}d\lambda(z),
 \end{align}
we establishe the following Bargmann transform
\begin{align}\label{E1.10}
 \nonumber L^{2}(\mathbb{R}_{+},& dx)\longrightarrow \mathcal{D}\\
 &f\longmapsto B[f](z):=\int_{0}^{+\infty}K(z,x)f(x)dx,
 \end{align}
  where $K(z,x)$ is given by
  \begin{align}\label{E1.11}
 K(z,x)=\frac{e^{\frac{x}{2}}}{\sqrt{\pi}}[1+\frac{z}{\Gamma(\frac{3}{2})}\int_{0}^{+\infty}\sqrt{t}e^{-t}
 (1-ze^{-t})^{-2}\exp(\frac{-xze^{-t}}{1-ze^{-t}})L_{1}(\frac{x}{1-ze^{-t}})dt].
   \end{align}
In the same way, we deal with the generalized Bergmann-Dirichlet space (\cite{Elh}) $\mathcal{D}_{m}^{\alpha}$, $\alpha>-1$,
 $m\in\mathbb{Z}_{+}$ defined as the space of analytic function $f(z)$ on the unit disk $\mathbb{D}$
\begin{align}\label{E1.12}
\displaystyle{f(z)=\sum_{j=0}^{+\infty}}a_{j}z^{j},\hspace{0.25cm} z\in \mathbb{D},
\end{align}
for which the generalized Dirichlet integral
\begin{align}\label{E1.13}
\mathcal{D}_{m}^{\alpha}(f)=\frac{1}{\pi}\int_{\mathbb{D}}\mid f^{(m)}(z)\mid^{2} d\mu_{\alpha}(z),
\end{align}
is finite, where $d\mu_{\alpha}(z)=(1-\mid z\mid^{2})^{\alpha}d\lambda(z).$\\
Any function $f\in \mathcal{D}_{m}^{\alpha}$ splits as
\begin{align}\label{E1.14}
f(z)=f_{1,m}(z)+f_{2,m}(z),
\end{align}
where
$\displaystyle{f_{1,m}(z)=\sum_{j=0}^{m-1}a_{j}z^{j}}$ and
 $\displaystyle{f_{2,m}(z)=\sum_{j=m}^{+\infty}a_{j}z^{j}},$
with the convention that $f_{1,m}(z)=0$  when $m=0.$\\
The generalized Bergmann-Dirichlet space equipped with the following scalar product
\begin{align}\label{E1.15}
<f,g>_{\alpha,m}=<f_{1,m},g_{1,m}>_{\alpha}+<f_{2,m},g_{2,m}>_{\alpha},
\end{align}
where
\begin{align}\label{E1.16}
<f,g>_{\alpha}=\int_{\mathbb{D}}f(z)\overline{g(z)}d\mu_{\alpha}(z),
\end{align}
is R.K.H.S \cite{Elh}.\\
For this functional space $\mathcal{D}_{m}^{\alpha}$, we prove the following characterization

\begin{align}\label{E1.17}
 \mathcal{D}_{m}^{\alpha}=\{F\in D(\tilde{\Delta}_{\alpha}),\hspace{0.25cm}\tilde{\Delta}_{\alpha}F=0\},
 \end{align}
where  $\tilde{\Delta}_{\alpha}$ is the elliptic partial differential operator defined by
\begin{align}\label{E1.18}
 \tilde{\Delta}_{\alpha}:=-4(1-\mid z\mid^{2})[(1-\mid z\mid^{2})\frac{\partial^{2}}
 {\partial z\partial\overline{z}}-(\alpha+2)\overline{z}\frac{\partial}{\partial\overline{z}}],
 \end{align}
acting on the Hilbert space $L^{2,\alpha}(\mathbb{D})=L^{2}(\mathbb{D},(1-\mid z\mid^{2})^{\alpha}d\lambda(z))$,
 with the dense domain
 \begin{align}\label{E1.19}
 D(\tilde{\Delta}_{\alpha}):=\{F\in L^{2,\alpha}(\mathbb{D}),\hspace{0.25cm}\tilde{\Delta}_{\alpha}F\in L^{2,\alpha}(\mathbb{D})\hspace{0.25cm}and\hspace{0.25cm}\frac{\partial^{m} F}
 {\partial z^{m}}\in L^{2,\alpha}(\mathbb{D})\}.
 \end{align}
For this generalized Bergmann-Dirichlet space, we establish the following Bargmann transform
\begin{align}\label{E1.20}
 \nonumber L^{2}(\mathbb{R}_{+},& dx)\longrightarrow \mathcal{D}_{m}^{\alpha}\\
 &f\longmapsto B[f](z):=\int_{0}^{+\infty}K(z,x)f(x)dx,
 \end{align}
  where
 \begin{align}\label{E1.21}
  \nonumber K(z,x)&=\frac{x^{\frac{-\alpha}{2}}e^{\frac{x}{2}}}{\sqrt{\pi \Gamma(1+\alpha)}}\displaystyle{\sum_{0\leq j<m}}z^{j}L_{j}^{(\alpha)}(x)\\
   &\hspace{-1.8cm}+\frac{m!z^{m}(\Gamma(\frac{3}{2}))^{-m}\Gamma(\frac{1}{2}))^{1-m}x^{\frac{-\alpha}{2}}
   e^{\frac{x}{2}}}{\sqrt{\pi \Gamma(1+\alpha)}}
   \int_{0}^{+\infty}\omega_{(\alpha,m)}(t)(1-ze^{-t})^{-\alpha-m-1}
 \exp(\frac{-xze^{-t}}{1-ze^{-t}})L_{m}^{(\alpha)}(\frac{x}{1-ze^{-t}})dt.
   \end{align}
and $\varpi_{(\alpha,m)}$ is the function defined as follows
\begin{align}\label{E1.22}
\varpi_{(\alpha,m)}(t)=\sqrt{t}e^{-t}*[(\sqrt{t}e^{-2t})*(\frac{e^{-(\alpha+2)t}}{\sqrt{t}})]*...
*[(\sqrt{t}e^{-mt})*(\frac{e^{-(\alpha+m)t}}{\sqrt{t}})], \hspace{0.25cm}m\geq2.
\end{align}
The notation $f*g$ means the following convolution product
\begin{align}\label{E1.23}
f*g(x)=\int_{0}^{x}f(x-y)g(y)dy.
\end{align}
This paper is summarized as follows. In section 2, we build in a general way a Bargmann transform associated with R.K.H.S and we apply this to recover the classical Bargmann transform on $\mathbb{C}$ (\cite{Bar}), the second Bargmann transform on the unit disk \cite{Bar} and the generalized second Bargmann transforms \cite{Elw}.
  In the section 3, we prove the proposition $(\ref{P3.1})$ related to the characterization of the Dirichlet space as harmonic
  space of a single elliptic second order partial differential operator. Also, in this
  section we prove the proposition $(\ref{P3.3})$ in which we give the Bargmann transform associated with the Dirichlet space.
  The section 4 will be devoted to the generalized Bergman-Dirichlet space. In this section we follow the same lines as in the section 3.
\section{Bargmann transform associated with R.K.H.S}
\subsection{New Bargmann transform associated with R.K.H.S}
In this subsection, based on the reproducing kernel theory \cite{Aro}, we construct a Bargmann transform associated with an abstract reproducing kernel Hilbert space. We have the following result.
\begin{theorem}\label{T2.1} Let $L^{2}(M_{1},d\mu_{1})$ and $L^{2}(M_{2},d\mu_{2})$ be a two separable Hilbert spaces and let $\mathcal{A}\subseteq L^{2}(M_{2},d\mu_{2})$ be a reproducing kernel Hilbert subspace, where $d\mu_{1}$ and $d\mu_{2}$ are a $\sigma-$finite measures. Let $\mathcal{B}_{1}=(\varphi_{j})_{j\in\mathbb{Z}^{+}}$ and $\mathcal{B}_{2}=(\psi_{j})_{j\in\mathbb{Z}^{+}}$ be two orthonormal basis of $L^{2}(M_{1},d\mu_{1})$ and $\mathcal{A}$, respectively,  then we have the following statements
\begin{description}
  \item[i)] For each fixed $z\in M_{2}$, the series
  \begin{align}\label{E2.1}
  K(z,w)=\displaystyle{\sum_{j\in\mathbb{Z}^{+}}}\overline{\varphi_{j}(w)}\psi_{j}(z), \hspace{0.25cm}w\in M_{1},
  \end{align}
  converges a.e$-d\mu_{1}$ and $K(z,.)\in L^{2}(M_{1},d\mu_{1}).$
  \item[ii)] The following integral transform
  \begin{align}\label{E2.2}
		\nonumber&L^{2}(M_{1},d\mu_{1})\longrightarrow \mathcal{A}\\
		&\varphi\longmapsto B[\varphi](z):=\int_{M_{1}}K(z,w)\varphi(w)d\mu_{1}(w),\hspace{0.25cm}z\in M_{2},
		\end{align}
defines a unitary isomorphism from $L^{2}(M_{1},d\mu_{1})$ into the R.K.H.S $\mathcal{A}.$
 \item[iii)] The inverse isomorphism $B^{-1}$ is given by the integral transform
		\begin{align}\label{E2.3}
		\nonumber&\mathcal{A} \longrightarrow L^{2}(M_{1},d\mu_{1})\\
		&\psi\longmapsto B^{-1}[\psi](w):=\int_{M_{2}}\overline{K(z,w)}\psi(z)d\mu_{2}(z),
\hspace{0.25cm}w\in M_{1}.
\end{align}
\item[iv)] We have the following pairwise  formula
   \begin{align}\label{E2.4}
   \hspace{-2cm}\textbf{(a)} \hspace{3cm} \psi_{j}(z)=\int_{M_{1}}K(z,w)\varphi_{j}(w)d\mu_{1}(w),\hspace{0.25cm}z\in M_{2}.
   \end{align}
  \begin{align}\label{E2.5}
  \hspace{-2cm}\textbf{(b)} \hspace{3cm} \varphi_{j}(w)=\int_{M_{2}}\overline{K(z,w)}\psi_{j}(z)d\mu_{2}(z),\hspace{0.25cm}w\in M_{1}.
\end{align}
\end{description}
\end{theorem}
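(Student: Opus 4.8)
The plan is to recognize $B$ as nothing more than the unitary operator that carries the orthonormal basis $\mathcal{B}_1$ onto the orthonormal basis $\mathcal{B}_2$, written out through its ``matrix kernel'' $K$. The single structural input that makes everything work is the reproducing kernel property of $\mathcal{A}$: for each fixed $z\in M_2$ the evaluation functional $\psi\mapsto\psi(z)$ is bounded on $\mathcal{A}$, and its squared norm is exactly $\mathcal{K}(z,z)=\sum_{j}|\psi_j(z)|^2<\infty$, where $\mathcal{K}$ denotes the reproducing kernel of $\mathcal{A}$. Thus $(\psi_j(z))_j\in\ell^2$ for every fixed $z$. This is the observation I would isolate first.

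For \textbf{i)}, I fix $z$ and apply the Riesz--Fischer theorem to the orthonormal system $(\overline{\varphi_j})_j$ of $L^2(M_1,d\mu_1)$ (it is orthonormal since complex conjugation is an antiunitary of $L^2$): because $(\psi_j(z))_j\in\ell^2$, the series $\sum_j\psi_j(z)\,\overline{\varphi_j(\cdot)}$ converges in $L^2(M_1,d\mu_1)$ to a function I call $K(z,\cdot)$, with $\|K(z,\cdot)\|^2_{L^2(M_1)}=\sum_j|\psi_j(z)|^2=\mathcal{K}(z,z)$. This already yields $K(z,\cdot)\in L^2(M_1,d\mu_1)$; the pointwise a.e. statement then follows by passing to the a.e.-convergent subsequence furnished by the $L^2$ limit (the full a.e. convergence being understood in this $L^2$ sense).

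For \textbf{ii)}, I introduce the ``abstract'' operator $U\colon L^2(M_1,d\mu_1)\to\mathcal{A}$, $U\varphi=\sum_j\langle\varphi,\varphi_j\rangle\,\psi_j$ (convergent in $\mathcal{A}$); since $U$ sends $(\varphi_j)$ bijectively to $(\psi_j)$ it is a unitary isomorphism onto $\mathcal{A}$ by construction, and the work is to check $B=U$ pointwise. Writing the defining integral as an inner product, $B\varphi(z)=\int_{M_1}K(z,w)\varphi(w)\,d\mu_1(w)=\langle\varphi,\overline{K(z,\cdot)}\rangle_{L^2(M_1)}$, and inserting the $L^2$-expansion $\overline{K(z,\cdot)}=\sum_j\overline{\psi_j(z)}\,\varphi_j$ from \textbf{i)}, continuity of the inner product gives $B\varphi(z)=\sum_j\psi_j(z)\langle\varphi,\varphi_j\rangle$. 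On the other hand, since point evaluation at $z$ is continuous on the R.K.H.S $\mathcal{A}$, it commutes with the $\mathcal{A}$-convergent sum, so $(U\varphi)(z)=\sum_j\langle\varphi,\varphi_j\rangle\psi_j(z)$. Hence $B\varphi=U\varphi$ and $B$ is the asserted unitary. I stress that this step needs \emph{no} Fubini, only the $L^2(M_1)$-convergence from \textbf{i)} together with continuity of evaluation in a R.K.H.S. Specializing $\varphi=\varphi_j$ gives the pairing formula \textbf{iv)(a)}, $B\varphi_j=\psi_j$, at once.

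Finally, for \textbf{iii)} and \textbf{iv)(b)}, I would use $B^{-1}=B^{\ast}$ and identify the adjoint with the conjugate-kernel integral. Testing against $\varphi\in L^2(M_1)$ and $\psi\in\mathcal{A}\subseteq L^2(M_2,d\mu_2)$ with the inherited inner product, $\langle B\varphi,\psi\rangle_{L^2(M_2)}=\int_{M_2}\!\Big(\int_{M_1}K(z,w)\varphi(w)\,d\mu_1(w)\Big)\overline{\psi(z)}\,d\mu_2(z)$, and exchanging the two integrations formally produces $\langle\varphi,B^{-1}\psi\rangle_{L^2(M_1)}$ with $B^{-1}\psi(w)=\int_{M_2}\overline{K(z,w)}\psi(z)\,d\mu_2(z)$; taking $\psi=\psi_j$ then gives \textbf{iv)(b)}. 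The main obstacle is precisely this exchange of order of integration: the joint kernel $K$ is \emph{not} in $L^2(M_1\times M_2)$ (its norm would be $\sum_j 1$), and $\mathcal{K}(z,z)=\sum_j|\psi_j(z)|^2$ is in general not $d\mu_2$-integrable, so Tonelli's absolute-integrability test is unavailable directly. I would circumvent this with the $\sigma$-finiteness hypotheses and a truncation/density argument: replace $K$ by the finite partial kernels $K_N(z,w)=\sum_{j\le N}\overline{\varphi_j(w)}\psi_j(z)$, for which Fubini is trivial and $\int_{M_2}\overline{K_N(z,w)}\psi(z)\,d\mu_2(z)=\sum_{j\le N}\langle\psi,\psi_j\rangle\varphi_j(w)$; the right-hand side converges in $L^2(M_1)$ to $B^{\ast}\psi=B^{-1}\psi$ as $N\to\infty$, and the remaining point is to show the left-hand integrals converge (for a.e. $w$ along a subsequence, and in $L^2(M_1)$) to $\int_{M_2}\overline{K(z,w)}\psi(z)\,d\mu_2(z)$, where one must exploit the square-integrability of the particular $\psi\in\mathcal{A}$ rather than of $K(\cdot,w)$ alone. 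This convergence of the inverse integral is the delicate heart of the statement; everything else is the soft ``change of orthonormal basis'' bookkeeping.
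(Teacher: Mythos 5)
For parts \textbf{(i)}, \textbf{(ii)} and \textbf{(iv)(a)} your argument is correct and is essentially the paper's own proof, organized in the opposite direction: the paper bounds the functional $\varphi\mapsto\sum_j\langle\varphi,\varphi_j\rangle\psi_j(z)$ by Cauchy--Schwarz together with the identity $\sum_j|\psi_j(z)|^2=K_{\mathcal{A}}(z,z)$ (Papadakis' theorem, formula $(\ref{E2.11})$), produces a representing vector $h_z$ by the Riesz representation theorem, and then identifies $h_z=\overline{K(z,\cdot)}$; you instead build $K(z,\cdot)$ directly by Riesz--Fischer and match $B\varphi(z)$ against $(U\varphi)(z)$ using continuity of evaluation on $\mathcal{A}$. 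Same ingredients, same conclusions. Your parenthetical hedge about the a.e.\ convergence in \textbf{(i)} also matches what the paper actually proves: both arguments establish $L^{2}(M_{1})$-convergence and then identify the limit a.e., not genuine pointwise a.e.\ convergence of the series.

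The genuine gap is in \textbf{(iii)} (and hence \textbf{(iv)(b)}, which the paper deduces from it), and it is exactly the step you flagged and left open. At the point where your truncation argument stops, the paper's proof, in $(\ref{E2.25})$, simply interchanges the two integrations ``by Fubini''. That interchange is unjustified: Fubini requires $\int_{M_{2}}\int_{M_{1}}|K(z,w)\varphi(w)\psi(z)|\,d\mu_{1}(w)\,d\mu_{2}(z)<\infty$, and this can fail --- already in the paper's own first application. For the classical Bargmann kernel $K(z,x)=\pi^{-3/4}\exp(\sqrt{2}xz-z^{2}/2)$ with $d\mu_{2}(z)=e^{-|z|^{2}}d\lambda(z)$ one computes $|K(z,x)|^{2}e^{-|z|^{2}}=\pi^{-3/2}e^{-2u^{2}+2\sqrt{2}xu}$ with $u=\operatorname{Re}z$, which is independent of $\operatorname{Im}z$, so that $K(\cdot,x)\notin L^{2}(M_{2},d\mu_{2})$; moreover, taking normalized coherent states $k_{a}(z)=e^{\overline{a}z-|a|^{2}/2}$ one can check that $\psi=\sum_{j\geq1}j^{-1}k_{icj}$ (points spaced along the imaginary axis, $c$ large, coefficients square-summable but not summable) lies in $\mathcal{A}^{2}(\mathbb{C})$, yet $\int_{M_{2}}|K(z,x)\psi(z)|\,d\mu_{2}(z)=+\infty$ for every $x$, and the double integral above diverges as well. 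Consequently part \textbf{(iii)}, read as an absolutely convergent pointwise integral valid for every $\psi\in\mathcal{A}$, cannot be proved --- not by your truncation scheme, and not by the paper's Fubini step, which is simply an error that the paper does not acknowledge. What is provable, and what your $K_{N}$ scheme does deliver, is the operator identity $B^{-1}\psi=\lim_{N}\sum_{j\leq N}\langle\psi,\psi_{j}\rangle\varphi_{j}$ in $L^{2}(M_{1})$ (i.e.\ statement (iii) of theorem $(\ref{T2.2})$), together with the integral formula for those $\psi$ for which the integral happens to converge absolutely, such as finite linear combinations of the $\psi_{j}$ in the concrete examples. So the ``delicate heart'' you could not close is not a missing lemma on your side; it is a defect of the statement itself, which the paper's proof conceals rather than resolves.
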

\begin{proof} First, we set $\mathcal{H}_{1}=L^{2}(M_{1},d\mu_{1})$ and $\mathcal{H}_{2}=L^{2}(M_{2},d\mu_{2}).$ Let $\tilde{B}$ be the following linear operator defined by
	\begin{align}\label{E2.6}
			\nonumber&\tilde{B}:\mathcal{H}_{1}\longrightarrow \mathcal{A}\\
		&\varphi=\displaystyle{\sum_{j\in\mathbb{Z}^{+}}}<\varphi,\varphi_{j}>_{\mathcal{H}_{1}}\varphi_{j}\longmapsto \tilde{B}[\varphi]=\displaystyle{\sum_{j\in\mathbb{Z}^{+}}}<\varphi,\varphi_{j}>_{\mathcal{H}_{1}}\psi_{j}.
			\end{align}
By theory of Fourier series in Hilbert space, $\tilde{B}$ is well defined as a  unitary isomorphism from $\mathcal{H}_{1}$ into $\mathcal{A}$. Now, let us prove that  the operator $\tilde{B}$ coincides with the integral operator $B$ defined in $(\ref{E2.2})$.\\
To do so, we first consider the following series
\begin{align}\label{E2.7}
 \tilde{B}[\varphi](z)=\displaystyle{\sum_{j\in\mathbb{Z}^{+}}}<\varphi,\varphi_{j}>_{\mathcal{H}_{1}}\psi_{j}(z).
\end{align}
By the Cauchy-Chwartz inequality, we have
\begin{align}\label{E2.8}
\mid \tilde{B}[\varphi](z)\mid\leq
(\displaystyle{\sum_{j\in\mathbb{Z}^{+}}}\mid<\varphi,\varphi_{j}>_{\mathcal{H}_{1}}\mid^{2})^{\frac{1}{2}}
(\displaystyle{\sum_{j\in\mathbb{Z}^{+}}}\mid\psi_{j}(z)\mid^{2})^{\frac{1}{2}}.
\end{align}
For later use, it is recalled that
\begin{align}\label{E2.9}
\parallel\varphi\parallel_{\mathcal{H}_{1}}=(\displaystyle{\sum_{j\in\mathbb{Z}^{+}}}\mid<\varphi,\varphi_{j}>_{\mathcal{H}_{1}}
			\mid^{2})^{\frac{1}{2}}.
\end{align}
For proving that the series  $\tilde{B}[\varphi](z)$, defined in $(\ref{E2.7})$, is absolutely convergent, it suffice to show that the involved series
\begin{align}\label{E2.10}
\rho(z)=(\displaystyle{\sum_{j\in\mathbb{Z}^{+}}}\mid\psi_{j}(z)\mid^{2})^{\frac{1}{2}},
\end{align}
appearing in the equation $(\ref{E2.8})$, is  convergent.
 Let $K_{\mathcal{A}}(z,w)$ be the reproducing kernel of the Hilbert space $\mathcal{A}$. According to Papadakis theorem \cite[p.12]{Pau}, we have  the following formula
 \begin{align}\label{E2.11}
 K_{\mathcal{A}}(z,w)=\displaystyle{\sum_{j\in\mathbb{Z}^{+}}}f_{j}(z)\overline{f_{j}(w)},\hspace{0.25cm}z,w\in M_{1},
 \end{align}
 for any orthonormal basis $\{f_{j}\}_{j\in\mathbb{Z}^{+}}$ of the Hilbert space $\mathcal{A}$. Moreover, the series in the left hand side of $(\ref{E2.11})$ is pointwise convergent. It follows that the series  $\rho(z)$ defined in  $(\ref{E2.10})$ coincides  with the number $\sqrt{K_{\mathcal{A}}(z,z)}$. Thus, the convergence of the series $\rho(z)$ is obtained.\\
 Returning back to the inequality  $(\ref{E2.8})$ and using $(\ref{E2.9})$ with $(\ref{E2.10})$, we find the following estimate
 \begin{align}\label{E2.12}
			\mid\tilde{B}[\varphi](z)\mid \leq \rho(z)\parallel\varphi\parallel_{\mathcal{H}_{1}}.
\end{align}
 The above inequality traduces the continuity of the following linear functional
 \begin{align}\label{E2.13}
			\nonumber&H_{1}\longrightarrow \mathbb{C}\\
			&\varphi\longmapsto \tilde{B}[\varphi](z),
			\end{align}
for each fixed $z\in M_{2}$.\\
Then, by Riesz-theorem, there exists $h_{z}\in \mathcal{H}_{1}$ such that
\begin{align}\label{E2.14}
\tilde{B}[\varphi](z)=<\varphi,h_{z}>_{\mathcal{H}_{1}}.
\end{align}
From the definition of $\tilde{B}$ given in $(\ref{E2.6})$, we obtain the following equality
\begin{align}\label{E2.15}
 \displaystyle{\sum_{j\in\mathbb{Z}^{+}}}<\varphi,\varphi_{j}>_{\mathcal{H}_{1}}\psi_{j}(z)=
 <\varphi,h_{z}>_{\mathcal{H}_{1}}.
 \end{align}
 Recalling that $\{\varphi_{j}\}_{j\in\mathbb{Z}^{+}}$ is an orthonormal basis of $\mathcal{H}_{1}$ and using the fact that  the series
 \begin{align}\label{E2.16}
 \displaystyle{\sum_{j\in\mathbb{Z}^{+}}}\mid\psi_{j}(z)\mid^{2}=\rho^{2}(z),
 \end{align}
 is convergent, we obtain that the series
 $\displaystyle{\sum_{j\in\mathbb{Z}^{+}}}\overline{\psi_{j}(z)}\varphi_{j}$
converges in $\mathcal{H}_{1}.$ This last result combined with the continuity of hermitian scalar product imply that the equation $(\ref{E2.15})$ can be rewritten as
\begin{align}\label{E2.17} <\varphi,\displaystyle{\sum_{j\in\mathbb{Z}^{+}}}\overline{\psi_{j}(z)}\varphi_{j}>_{\mathcal{H}_{1}}=<\varphi,h_{z}>_{\mathcal{H}_{1}},\hspace{0.25cm}
			for\hspace{0.25cm}all\hspace{0.25cm}
			\varphi\in \mathcal{H}_{1}.
\end{align}
This last  equation leads to the following equation
\begin{align}\label{E2.18}
K(z,w):=\displaystyle{\sum_{j\in\mathbb{Z}^{+}}}\overline{\varphi_{j}}(w)\psi_{j}(z)=\overline{h_{z}}(w),
\hspace{0.25cm}\texttt{a.e}-d\mu_{1}\hspace{0.25cm}in \hspace{0.25cm}M_{1},
\end{align}
for each fixed $z\in M_{2}$. Thus, the first point of the theorem $(\ref{T2.1})$ is proved.\\
Thanks to the equation $(\ref{E2.18})$, the right hand side of the equation $(\ref{E2.14})$ can be written as
\begin{align}\label{E2.19}
<\varphi,h_{z}>_{\mathcal{H}_{1}}=\int_{M_{1}}K(z,w)\varphi(w)d\mu_{1}(w),\hspace{0.25cm}\varphi\in\mathcal{H}_{1} ,\hspace{0.25cm}z\in M_{2}.
\end{align}
Hence, the equation $(\ref{E2.14})$ becomes
\begin{align}\label{E2.20}
\tilde{B}[\varphi](z)=\int_{M_{1}}K(z,w)\varphi(w)d\mu_{1}(w)=B[\varphi](z),\hspace{0.25cm}\varphi\in\mathcal{H}_{1},\hspace{0.25cm}z\in M_{2}.
\end{align}
Thus, this last equality gives the second point of theorem $(\ref{T2.1})$.\\
For ending the proof, it remains to prove that the inverse isomorphism $B^{-1}$ is given by the following integral transform
\begin{align}\label{E2.21}
B^{-1}[\psi](z):=\int_{M_{2}}\overline{K(w,z)}\psi(w)d\mu_{2}(w),\hspace{0.25cm}\psi\in\mathcal{H}_{2},\hspace{0.25cm}z\in M_{1}.
\end{align}
By using the fact that $B$ is a unitary isomorphism, we have
\begin{align}\label{E2.22}
			<B[\varphi_{1}],B[\varphi_{2}]>_{\mathcal{H}_{2}}
=<\varphi_{1},\varphi_{2}>_{\mathcal{H}_{1}},\hspace{0.25cm}\varphi_{1},\hspace{0.2cm}\varphi_{2}\in\mathcal{H}_{1}.
\end{align}
Let $B^{*}:\mathcal{A} \longrightarrow  H_{1}$ be the adjoint operator of $B$ defined by the following equation
			\begin{align}\label{E2.23}
			<B[\varphi],\psi>_{\mathcal{H}_{2}}=<\varphi,B^{*}[\psi]>_{\mathcal{H}_{1}}.
			\end{align}
Using the equation $(\ref{E2.22})$ with $(\ref{E2.23})$, we obtain
\begin{align}\label{E2.24}
<\varphi_{1},B^{*}B(\varphi_{2})>_{\mathcal{H}_{1}}=<\varphi_{1},\varphi_{2}>_{\mathcal{H}_{1}}\hspace{0.25cm}for\hspace{0.2cm} all \hspace{0.25cm} \varphi_{1}\in \mathcal{H}_{1}.
\end{align}
It follows that $B^{*}B=I_{1}$, where $I_{1}$ is the identity of $\mathcal{H}_{1}$, and hence, $B^{-1}=B^{*}.$ By applying the definition of $B^{*}$  and using the Fubini theorem \cite[p.187,p.48]{Ber,Val}, we get
\begin{align}\label{E2.25}
			\nonumber<\varphi,B^{*}[\psi]>_{\mathcal{H}_{1}}&=<B[\varphi],\psi>_{\mathcal{H}_{2}}\\
			\nonumber&=\int_{M_{2}}\int_{M_{1}}K(z,w)\varphi(w)\overline{\psi(z)}d\mu_{1}(w)d\mu_{2}(z)\\
			\nonumber&=\int_{M_{1}}\int_{M_{2}}K(z,w)\varphi(w)\overline{\psi(z)}d\mu_{2}(z)d\mu_{1}(w)\\
			&=\int_{M_{1}}\varphi(w)\overline{\int_{M_{2}}\overline{K(z,w)}\psi(z)d\mu_{2}(z)}d\mu_{1}(w).
\end{align}
Finally, we find
\begin{align}\label{E2.26}
			B^{-1}[\psi](w)=B^{*}[\psi](w)=\int_{M_{2}}\overline{K(z,w)}\psi(z)d\mu_{2}(z),\hspace{0.25cm} w\in M_{1}.
\end{align}
Taking the operator $\tilde{B}$ defined in $(\ref{E2.6})$ and the formula $(\ref{E2.20})$, we obtain easily the first point $(a)$ in $(iv)$. Also, by using the fact that the operator $\tilde{B}$ defined in   $(\ref{E2.46})$ is a one to one isometry, we can write the operator $\tilde{B}^{-1}$ as follows
\begin{align}\label{E2.27}
		\tilde{B}^{-1}: \hspace{0.2cm}\nonumber&\mathcal{A} \longrightarrow \mathcal{H}_{1}\\
		&\psi\longmapsto \tilde{B}^{-1}[\psi]=\sum_{j\in \mathbb{Z}_{+}}<\psi,\psi_{j}>_{\mathcal{H}_{2}}\varphi_{j}.
\end{align}
Then, by exploiting the expression of the last operator $\tilde{B}^{-1}$ and the formula  $(\ref{E2.21})$, we get the second point $(b)$ in $(iv)$.\\
The proof of theorem is closed.
\end{proof}
\begin{remark} In the case where $B_{1}=\{\varphi_{j}\}_{j\in \mathbb{Z}_{+}}$ is just only an orthonormal family and does not form a complete system in $\mathcal{H}_{1}=L^{2}(M_{1},d\mu_{1})$. The theorem $(\ref{T2.1})$ remains true if we take the closed subspace spanned by $\{\varphi_{j}\}_{j\in \mathbb{Z}_{+}}$ instead of the whole space $\mathcal{H}_{1}.$
\end{remark}
\begin{remark}
Notice that there exists a R.K.H.S which is not of $L^{2}-$type, as will be seen in the next section for the Dirichlet spaces \cite{Elf}. For these type-spaces, the theorem $(\ref{T2.1})$ can not be directly applied. However, to overcome this problem we can state the following theorem which is just a light modification of the first one.
\end{remark}
\begin{theorem}\label{T2.2} Let $L^{2}(M_{1},d\mu_{1})$ be a separable Hilbert space and let $\mathcal{A}$ be a reproducing kernel Hilbert space equipped with a scalar product $<,>_{\mathcal{A}}$. Let $\mathcal{B}_{1}=(\varphi_{j})_{j\in\mathbb{Z}^{+}}$ and $\mathcal{B}_{2}=(\psi_{j})_{j\in\mathbb{Z}^{+}}$ be an orthonormal basis of $L^{2}(M_{1},d\mu_{1})$ and $\mathcal{A}$  respectively. Then, we have the following statements
\begin{description}
  \item[i)] For each $z\in M_{2}$, the series
  \begin{align}\label{E2.28}
  K(z,w)=\displaystyle{\sum_{j\in\mathbb{Z}^{+}}}\overline{\varphi_{j}(w)}\psi_{j}(z),\hspace{0.25cm}w\in M_{1},
  \end{align}
  converges a.e$-d\mu_{1}$ and $K(z,.)\in L^{2}(M_{1},d\mu_{1}).$
  \item[ii)] The following integral transform
  \begin{align}\label{E2.29}
		\nonumber&L^{2}(M_{1},d\mu_{1})\longrightarrow \mathcal{A}\\
		&\varphi\longmapsto B[\varphi](z):=\int_{M_{1}}K(z,w)\varphi(w)d\mu_{1}(w),\hspace{0.25cm}z\in M_{2},
		\end{align}
defines a unitary isomorphism from $L^{2}(M_{1},d\mu_{1})$ into the R.K.H.S $\mathcal{A}.$
 \item[iii)] The inverse isomorphism $B^{-1}$ is given by the following series
		\begin{align}\label{E2.30}
		\nonumber&\mathcal{A} \longrightarrow L^{2}(M_{1},d\mu_{1})\\
		&\psi\longmapsto B^{-1}[\psi]:=\sum_{j=0}^{+\infty}<\psi,\psi_{j}>_{\mathcal{A}}\varphi_{j}.
		\end{align}
\end{description}
\end{theorem}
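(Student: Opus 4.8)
The plan is to follow the architecture of the proof of Theorem~$(\ref{T2.1})$, retaining verbatim those steps that depend only on the reproducing kernel structure of $\mathcal{A}$, and replacing the inverse computation (which previously used the ambient embedding $\mathcal{A}\subseteq L^{2}(M_{2},d\mu_{2})$ through a Fubini interchange) by a purely abstract Fourier-series expression. Accordingly, I would first introduce the operator
$$
\tilde{B}[\varphi]=\sum_{j\in\mathbb{Z}^{+}}<\varphi,\varphi_{j}>_{\mathcal{H}_{1}}\psi_{j},\qquad \mathcal{H}_{1}=L^{2}(M_{1},d\mu_{1}),
$$
and observe that, since it carries the orthonormal basis $\mathcal{B}_{1}$ onto the orthonormal basis $\mathcal{B}_{2}$, the theory of Fourier series in Hilbert space immediately yields that $\tilde{B}$ is a well-defined unitary isomorphism from $\mathcal{H}_{1}$ onto $\mathcal{A}$.

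For point \textbf{i)}, the key observation is that the convergence of $\rho(z)=(\sum_{j}\mid\psi_{j}(z)\mid^{2})^{1/2}$ never appealed to the $L^{2}(M_{2})$ structure in Theorem~$(\ref{T2.1})$: by the Papadakis theorem applied to the R.K.H.S $\mathcal{A}$ with the orthonormal basis $\mathcal{B}_{2}$, one has $\sum_{j}\mid\psi_{j}(z)\mid^{2}=K_{\mathcal{A}}(z,z)<\infty$ for each fixed $z\in M_{2}$. Hence $\rho(z)<\infty$, and the Cauchy--Schwarz inequality gives the pointwise estimate $\mid\tilde{B}[\varphi](z)\mid\leq\rho(z)\parallel\varphi\parallel_{\mathcal{H}_{1}}$. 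Since $(\overline{\psi_{j}(z)})_{j}$ is square-summable, the series $\sum_{j}\overline{\psi_{j}(z)}\varphi_{j}$ converges in $\mathcal{H}_{1}$ to some $h_{z}$, and the identification $\overline{h_{z}(w)}=\sum_{j}\psi_{j}(z)\overline{\varphi_{j}(w)}=K(z,w)$, valid a.e$-d\mu_{1}$, shows that $K(z,\cdot)\in\mathcal{H}_{1}$. This establishes \textbf{i)}.

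For point \textbf{ii)}, I would invoke the Riesz representation theorem on the continuous linear functional $\varphi\mapsto\tilde{B}[\varphi](z)$, whose continuity comes from the estimate just obtained, to write $\tilde{B}[\varphi](z)=<\varphi,h_{z}>_{\mathcal{H}_{1}}$; substituting $\overline{h_{z}}=K(z,\cdot)$ converts this into the integral transform $\int_{M_{1}}K(z,w)\varphi(w)\,d\mu_{1}(w)=B[\varphi](z)$, so that $B=\tilde{B}$ and $B$ is in particular the unitary isomorphism produced in the first step. For point \textbf{iii)}, since $\tilde{B}$ is unitary and sends $\varphi_{j}$ to $\psi_{j}$, its inverse sends $\psi_{j}$ back to $\varphi_{j}$, whence $B^{-1}[\psi]=\tilde{B}^{-1}[\psi]=\sum_{j}<\psi,\psi_{j}>_{\mathcal{A}}\varphi_{j}$; no integral representation is available here precisely because $\mathcal{A}$ is not assumed to sit inside an $L^{2}$ space.

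The only genuine point to get right — and the sole divergence from Theorem~$(\ref{T2.1})$ — is recognizing exactly where the earlier proof used the hypothesis $\mathcal{A}\subseteq L^{2}(M_{2},d\mu_{2})$. That hypothesis entered solely in expressing the adjoint $B^{*}=B^{-1}$ as an integral kernel transform via the Fubini interchange in $(\ref{E2.25})$; every other step, including the crucial summability $\sum_{j}\mid\psi_{j}(z)\mid^{2}=K_{\mathcal{A}}(z,z)$, used only that $\mathcal{A}$ is a reproducing kernel Hilbert space. Dropping the ambient $L^{2}$ embedding therefore costs nothing in parts \textbf{i)} and \textbf{ii)} and merely forces the inverse to be recorded in its abstract Fourier-series form in part \textbf{iii)}.
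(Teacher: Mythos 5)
Your proposal is correct and follows essentially the same route as the paper: the paper's proof likewise observes that the arguments for \textbf{i)} and \textbf{ii)} from Theorem~(\ref{T2.1}) carry over unchanged (since they rest only on the reproducing kernel structure of $\mathcal{A}$, via the Papadakis expansion and the Riesz representation), and then obtains \textbf{iii)} from Fourier series theory applied to the unitary map $\varphi_{j}\mapsto\psi_{j}$. Your explicit identification of the Fubini step in $(\ref{E2.25})$ as the sole place where the hypothesis $\mathcal{A}\subseteq L^{2}(M_{2},d\mu_{2})$ was used is exactly the point the paper leaves implicit.
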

\begin{proof}
For $(i)$ and $(ii)$ the proofs given in theorem $(\ref{T2.1})$ remain unchanged.\\
For the proof of $(iii)$, we consider the associated Bargmann transform
\begin{align}\label{E2.31}
		\nonumber&L^{2}(M_{1},d\mu_{1})\longrightarrow (\mathcal{A},<,>_{\mathcal{A}})\\
		&\varphi\longmapsto B[\varphi]:=\sum_{j=0}^{+\infty}<\varphi,\varphi_{j}>_{\mathcal{H}_{1}}\psi_{j}.
		\end{align}
By a direct computation with the use of the Fourier's series theory in the Hilbert space, we can prove that the inverse of the transformation $B$ is given by the following Fourier series
\begin{align}\label{E2.32}
B^{-1}[\psi]=\sum_{j=0}^{+\infty}<\psi,\psi_{j}>_{\mathcal{A}}\varphi_{j}.
\end{align}
\end{proof}
Thanks to the above theorem, we can rederive some classical Bargmann transforms and its generalizations. Indeed, as examples we recover the classical Bargmann transform on $\mathbb{C}$ \cite{Bar}, the second Bargmann transform on the unit disk $\mathbb{D}$ \cite{Bar} and its generalization given by Intissar \emph{et} \emph{al.} \cite{Elw}.\\
Firstly, we are concerned with the classical Bargmann transform.
\subsection{Derivation of the classical Bargmann transform on the complex plan}
In this subsection, we deal with the following Bargmann transform
\begin{align}\label{E2.33}
		\nonumber&L^{2}(\mathbb{R},dx)\longrightarrow \emph{A}^{2}(\mathbb{C})\subset L^{2}(\mathbb{C},e^{-\mid z\mid^{2}}d\lambda(z))\\
		&\varphi\longmapsto B[\varphi](z):=\int_{-\infty}^{+\infty}\pi^{\frac{-3}{4}}\exp(-\frac{z^{2}}{2}+\sqrt{2}xz-\frac{x^{2}}{2})
\varphi(x)dx,\hspace{0.25cm}z\in \mathbb{C}.
\end{align}
The transformation $B$ is one to one isometry from $L^{2}(\mathbb{R},dx)$ into the Bargmann space  $\emph{A}^{2}(\mathbb{C})$ of holomorphic functions on $\mathbb{C}$ being $L^{2}-$integrable with respect to the Gaussian measure $d\mu_{2}(z)=e^{-\mid z\mid^{2}}d\lambda(z)$, where $d\lambda(z)$ is the Lebesgue measure on $\mathbb{C}.$\\
We recall that the Bargmann space $\emph{A}^{2}(\mathbb{C})$ is a R.K.H.S having
\begin{align}\label{E2.34}
\emph{K}(z,w)=\pi^{-1}e^{z\overline{w}},
\end{align}
as the reproducing kernel \cite{Bar}.\\
Now, in order to build this classical Bargmann transform by using theorem $(\ref{T2.1})$, we consider $$M_{1}=\mathbb{R},\hspace{0.25cm}d\mu_{1}(x)=e^{-x^{2}}dx,\hspace{0.25cm}M_{2}=\mathbb{C}\hspace{0.25cm}and\hspace{0.25cm}d\mu_{2}(z)=e^{-\mid z\mid^{2}}d\lambda(z).$$
It is well know that an orthogonal basis of $L^{2}(\mathbb{R},e^{-x^{2}}dx)$ is given in terms of the Hermite orthogonal polynomials \cite[p.250]{Mag},
\begin{align}\label{E2.35}
H_{j}(x)=j! \sum_{k=0}^{[\frac{j}{2}]}\frac{(-1)^{k}(2x)^{j-2k}}{k!(j-2k)!},
\end{align}
where $[\frac{j}{2}]$ is the integer part of the number $\frac{j}{2}.$\\
The corresponding normalized polynomials \cite[p.109]{Bea} defined by
\begin{align}\label{E2.36}
\varphi_{j}(x)=\frac{1}{\pi^{\frac{1}{4}}\sqrt{j!2^{j}}}H_{j}(x),\hspace{0.25cm}j\in\mathbb{Z}_{+},
\end{align}
forms an orthonormal basis of $L^{2}(\mathbb{R},e^{-x^{2}}dx)$.\\
 A canonical orthonormal basis \cite[p.201]{Bar} of the Bargmann space $\emph{A}^{2}(\mathbb{C})$ is given by
\begin{align}\label{E2.37}
\psi_{j}(z)=\frac{z^{j}}{\sqrt{\pi j!}}, \hspace{0.25cm}j\in \mathbb{Z}_{+}.
\end{align}
By using $(i)$ of theorem $(\ref{T2.1})$, the series
\begin{align}\label{E2.38}
  \nonumber K(z,x)&=\displaystyle{\sum_{j\in\mathbb{Z}^{+}}}\overline{\varphi_{j}(x)}\psi_{j}(z)\\
  &=\pi^{-\frac{3}{4}}\displaystyle{\sum_{j\in\mathbb{Z}^{+}}}\frac{H_{j}(x)}{j!}(\frac{z}{\sqrt{2}})^{j},
  \end{align}
converges, for each $z\in \mathbb{C},$ $\texttt{a.e}-d\mu_{1}(x)$ $in$ $\mathbb{R}.$\\
Using the generating  function \cite[p.102]{Mou} for the Hermite polynomials,
\begin{align}\label{E2.39}
\sum_{j=0}^{+\infty}\frac{H_{j}(x)}{j!}t^{j}=\exp(2xt-t^{2}),
\end{align}
for $t=\frac{z}{\sqrt{2}},$ the expression $(\ref{E2.38})$ becomes
\begin{align}\label{E2.40}
K(z,x)=\pi^{-\frac{3}{4}}\exp(\sqrt{2}xz-\frac{z^{2}}{2}).
\end{align}
Now, by using  $(ii)$ of theorem $(\ref{T2.1})$, the following transform
\begin{align}\label{E2.41}
\tilde{B}[\varphi](z):=\int_{-\infty}^{+\infty}\pi^{\frac{-3}{4}}\exp(-\frac{z^{2}}{2}+\sqrt{2}xz-x^{2})
\varphi(x)dx,\hspace{0.25cm}z\in \mathbb{C},
\end{align}
defines a unitary isomorphism from $L^{2}(\mathbb{R},e^{-x^{2}}dx)$
 into the Bargmann space $\emph{A}^{2}(\mathbb{C})$.\\
 It is easy to see that the following linear transform
 \begin{align}\label{E2.42}
		\nonumber T:\hspace{0.25cm}L^{2}(\mathbb{R},dx)&\longrightarrow L^{2}(\mathbb{R},e^{-x^{2}}dx)\\
		&f\longmapsto e^{\frac{x^{2}}{2}}f,
\end{align}
 is a unitary isomorphism. Then, by the composition $B=\tilde{B}\circ T$, we recover the Bargmann transform defined in $(\ref{E2.33}).$\\
 Moreover, by using $(iii)$ of theorem $(\ref{T2.1})$, the inverse isomorphism $B^{-1}$ is given by the integral transform
		\begin{align}\label{E2.43}
		\nonumber&L^{2}(\mathbb{C},e^{-\mid z\mid^{2}}d\lambda(z))\longrightarrow L^{2}(\mathbb{R},dx)\\
		&\psi\longmapsto B^{-1}[\psi](x):=\int_{\mathbb{C}}\pi^{\frac{-3}{4}}\exp(-\frac{\overline{z}^{2}}{2}+\sqrt{2}x\overline{z}-\frac{x^{2}}{2})
\psi(z)e^{-\mid z\mid^{2}}d\lambda(z),\hspace{0.25cm}x\in \mathbb{R}.
\end{align}
Secondly, we will be interested in the so-called second Bargmann transform \cite[p.203]{Bar}.
\subsection{Derivation of the second Bargmann transform on the unit disk}
In this subsection, we treat the following second Bargmann transform
\begin{align}\label{E2.44}
		\nonumber&L^{2}(\mathbb{R}_{+},\frac{x^{\delta}}{\Gamma(1+\delta)}dx)\longrightarrow  \emph{A}^{2,\delta}(\mathbb{D}) \subset L^{2}(\mathbb{D},\frac{\delta}{\pi}(1-\mid z\mid^{2})^{\delta-1}d\lambda(z))\\
		&\varphi\longmapsto B[\varphi](z):=\int_{0}^{+\infty}\frac{\exp(-\frac{x}{2}(\frac{1+z}{1-z}))}{(1-z)^{\delta+1}}
\varphi(x)\frac{x^{\delta}}{\Gamma(1+\delta)}dx,\hspace{0.25cm}z\in \mathbb{C},
\end{align}
where $\delta$ is a positive real number representing the parameter $\gamma$ in \cite[p.203]{Bar} and $\Gamma$ is the Euler Gamma function \cite[p.1]{Mag}. The transformation $B$ maps  isometrically the Hilbert space $L^{2}(\mathbb{R}_{+},\frac{x^{\delta}}{\Gamma(1+\delta)}dx)$ into the Bergman space  $\emph{A}^{2,\delta}(\mathbb{D})$ of holomorphic functions on unit disk $\mathbb{D}$ being $L^{2}-$integrable with respect to the  measure $d\mu_{2}(z)=\frac{\delta}{\pi}(1-\mid z\mid^{2})^{\delta-1}d\lambda(z)$.\\
To build the above transformation by the use of theorem $(\ref{T2.1})$, we first recall that the
Bergman space $\emph{A}^{2,\delta}(\mathbb{D})$ is a R.K.H.S with the following reproducing kernel \cite{Hed,Zhu2}.
\begin{align}\label{E2.45}
\emph{K}^{\delta}(z,w)=\frac{1}{(1-z\overline{w})^{\delta+1}}.
\end{align}
Secondly, we consider
$$M_{1}=\mathbb{R}_{+},\hspace{0.25cm}d\mu_{1}(x)=x^{\delta}e^{-x}dx,\hspace{0.25cm}M_{2}=\mathbb{D}\hspace{0.25cm}
and\hspace{0.25cm}d\mu_{2}(z)=\frac{\delta}{\pi}(1-\mid z\mid^{2})^{\delta-1}d\lambda(z).$$
The classical orthogonal basis of  $L^{2}(\mathbb{R}_{+},e^{-x}x^{\delta} dx)$ is given by the Laguerre polynomial
\cite[p.113]{Bea}
\begin{align}\label{E2.46}
L_{j}^{(\delta)}(x)=\sum_{k=0}^{j}\frac{(-1)^{k}(\delta+1)_{k}}{k!(j-k)!}x^{k},
\end{align}
where $(\alpha)_{k}=\alpha(\alpha+1)...(\alpha+k-1)$ is the Bokhammer symbol.\\
The corresponding orthonormalized polynomials \cite[p.115]{Bea} defined by
\begin{align}\label{E2.47}
\varphi_{j}(x)=\sqrt{\frac{j!}{\Gamma(\delta+j+1)}}L_{j}^{(\delta)}(x),\hspace{0.25cm}j\in\mathbb{Z}_{+},
\end{align}
constitute an orthonormal basis of the Hilbert space $L^{2}(\mathbb{R}_{+},x^{\delta}e^{-x} dx)$.\\
It is not hard to see that the family
\begin{align}\label{E2.48}
\psi_{j}(z)=\sqrt{\frac{\Gamma(1+\delta+j)}{j!\delta\Gamma(\delta)}}z^{j}, \hspace{0.25cm}j\in \mathbb{Z}_{+},
\end{align}
forms an orthonormal basis of the Bergman space $\emph{A}^{2,\delta}(\mathbb{D})$.\\
Now, by using  $(i)$ of theorem $(\ref{T2.1})$, we can see that the following series
\begin{align}\label{E2.49}
  \nonumber K(z,x)&=\displaystyle{\sum_{j\in\mathbb{Z}^{+}}}\overline{\varphi_{j}(x)}\psi_{j}(z)\\
  &=[\delta\Gamma(\delta)]^{-\frac{1}{2}}\displaystyle{\sum_{j\in\mathbb{Z}^{+}}}z^{j}L_{j}^{(\delta)}(x),
  \end{align}
converges for each fixed $z\in \mathbb{D},$ $\texttt{a.e}-d\mu_{1}(x)$ $in$ $\mathbb{R}_{+}.$\\
By applying the canonical functional relation $\Gamma(\delta+1)=\delta\Gamma(\delta)$, the formula (\ref{E2.49}) can be rewritten as
\begin{align}\label{E2.50}
  K(z,x)&=[\Gamma(\delta+1)]^{-\frac{1}{2}}\displaystyle{\sum_{j\in\mathbb{Z}^{+}}}z^{j}L_{j}^{(\delta)}(x).
  \end{align}
Now, with the help of the generating  function \cite[p.114]{Bea} for the Laguerre  polynomials,
\begin{align}\label{E2.51}
\sum_{j=0}^{+\infty}z^{j}L_{j}^{(\delta)}(x)=\frac{1}{(1-z)^{\delta+1}}\exp(-\frac{xz}{1-z}),
\end{align}
the function $K(z,x)$ given in equation $(\ref{E2.49})$ becomes
\begin{align}\label{E2.52}
K(z,x)=\frac{[\Gamma(\delta+1)]^{-\frac{1}{2}}}{(1-z)^{\delta+1}}\exp(-\frac{xz}{1-z}).
\end{align}
Its follows from $(ii)$ of theorem $(\ref{T2.1})$ that the integral transform
\begin{align}\label{E2.53}
\tilde{B}[\varphi](z):=\int_{0}^{+\infty}\frac{[\Gamma(\delta+1)]^{-\frac{1}{2}}}{(1-z)^{\delta+1}}
\exp(-\frac{xz}{1-z})\varphi(x)x^{\delta}e^{-x}dx,\hspace{0.25cm}z\in \mathbb{C},
\end{align}
maps isometrically the Hilbert space $L^{2}(\mathbb{R}_{+},x^{\delta}e^{-x} dx)$
 into the Bergman space $\emph{A}^{2,\delta}(\mathbb{C})$.\\
 Also in a natural way, we have the following unitary isomorphism
 \begin{align}\label{E2.54}
		\nonumber T:\hspace{0.25cm}L^{2}(\mathbb{R}_{+},&\frac{x^{\delta}}{\Gamma(1+\delta)}dx)\longrightarrow L^{2}(\mathbb{R}_{+},x^{\delta}e^{-x} dx)\\
		&\hspace{1cm}f\longmapsto \frac{e^{\frac{x}{2}}}{\sqrt{\Gamma(1+\delta)}}f.
\end{align}
Then, the  composition $B=\tilde{B}\circ T$ gives the  desired second  Bargmann transform defined in $(\ref{E2.44}).$\\
  Moreover, by using $(iii)$ of theorem $(\ref{T2.1})$ and the well known algebraic relation $B^{-1}=T^{-1}\circ \tilde{B}^{-1}$ the inverse isomorphism $B^{-1}$ can be given by the following integral transform
  \begin{align}\label{E2.55}
		\nonumber &\emph{A}^{2,\delta}(\mathbb{D}) \subset L^{2}(\mathbb{D},\frac{\delta}{\pi}(1-\mid z\mid^{2})^{\delta-1}d\lambda(z))\longrightarrow L^{2}(\mathbb{R}_{+},\frac{x^{\delta}}{\Gamma(1+\delta)}dx)\\
\nonumber\\
		&\psi\longmapsto B^{-1}[\psi](x):=\frac{\delta}{\pi}\int_{\mathbb{D}}
\frac{\exp(-\frac{x}{2}(\frac{1+\overline{z}}{1-\overline{z}}))}{(1-\overline{z})^{\delta+1}}
\psi(z)(1-\mid z\mid^{2})^{\delta-1}d\lambda(z),\hspace{0.25cm}x\in \mathbb{R}_{+}.
\end{align}
Now, we will be concerned with a new isometry called the generalized second Bargmann transform associated with the hyperbolic Landau-Levels on the Poincaré disk \cite{Elw}.
\subsection{Derivation of the generalized second Bargmann transform}
Our goal in this subsection is to show, concretely, that the generalized second Bargmann transform \cite{Elw} can be built rigourously by applying theorem $(\ref{T2.1})$ without using the coherent states language.\\
  Precisely, we will be concerned with the $L^{2}-$eigenspaces
 \begin{align}\label{E2.56}
 A_{\ell}^{2,\nu}(\mathbb{D}):=\{F\in L^{2,\nu}(\mathbb{D}),\hspace{0.25cm}H_{\nu}F=\varepsilon^{\nu}_{\ell}F\},
 \end{align}
 where $\varepsilon^{\nu}_{\ell}=4\ell(2\nu-\ell-1)$, $\nu>\frac{1}{2}$ and $H_{\nu}$ is the second order differential operator
 \begin{align}\label{E2.57}
 H_{\nu}=-4(1-\mid z\mid^{2})[(1-\mid z\mid^{2})\frac{\partial^{2}}{\partial z\partial\overline{z}}-2\nu\overline{z}\frac{\partial}{\partial\overline{z}}],
 \end{align}
 acting on the Hilbert space $L^{2,\nu}(\mathbb{D}):=(L^{2}(\mathbb{D}),(1-\mid z\mid^{2})^{2\nu-2}d\lambda(z))$ with a maximal domain
 \begin{align}\label{E2.58}
 D(H_{\nu}):=\{F\in L^{2,\nu}(\mathbb{D}),\hspace{0.25cm}H_{\nu}F\in L^{2,\nu}(\mathbb{D})\}.
 \end{align}
 More precisions on the origin of the expression of the differential operator $H_{\nu}$, as well as some area in which has been considered, will be given in the next section.\\
  The operator $H_{\nu}$ is self adjoint and the associated point spectrum is given by
\begin{align}\label{E2.59}
 \sigma_{p}(H_{\nu})=\{4\ell(2\nu-\ell-1),\hspace{0.25cm}\ell=0,1,2,...,[\nu-\frac{1}{2}]\},
 \end{align}
where $[x]$ is the integer part of real number $x.$\\
According to \cite{Elw,Hep,Zha}, the reproducing kernel of the Hilbert space $A_{\ell}^{2,\nu}(\mathbb{D})$ is
 \begin{align}\label{E2.60}
 \nonumber K_{\ell}^{\nu}(z,w)&=(\frac{2(\nu-\ell)-1}{\pi})(1-z\overline{w})^{-2\nu}(\frac{\mid1-z\overline{w}\mid^{2}}{(1-\mid z\mid^{2})(1-\mid w\mid^{2})})^{\ell}\\
 &\times P_{\ell}^{(0,2(\nu-\ell)-1)}(2\frac{(1-\mid z\mid^{2})(1-\mid w\mid^{2})}{\mid1-z\overline{w}\mid^{2}}-1).
\end{align}
The generalized second Bargmann transform reads as
\begin{align}\label{E2.61}
 \nonumber B_{\ell}^{\nu}:\hspace{0.25cm}L^{2}(\mathbb{R}^{*}_{+},& \frac{dx}{x})\longrightarrow A_{\ell}^{2,\nu}(\mathbb{D})\\
 &\varphi\longmapsto B_{\ell}^{\nu}[\varphi],
 \end{align}
 where
 \begin{align}\label{E2.62}
 \nonumber\hspace{2cm}B_{\ell}^{\nu}[\varphi](z):&=\bigg(\frac{\ell!(2(\nu-\ell)-1)}{\pi\Gamma(2\nu-\ell)}\bigg)^{\frac{1}{2}}
 \\
 &\hspace{-4cm}\times\int_{0}^{+\infty}x^{\nu-\ell}\bigg(\frac{1-\mid z\mid^{2}}{\mid1-z\mid^{2}}\bigg)^{-\ell}(1-z)^{-2\nu}\exp(-\frac{x}{2}(\frac{z+1}{z-1}))L^{2(\nu-\ell)-1}_{\ell}\bigg(x\frac{1-\mid z\mid^{2}}{\mid1-z\mid^{2}}\bigg)\varphi(x)\frac{dx}{x}.
 \end{align}
 In order to recover the above isometry, we consider
$$M_{1}=\mathbb{R}_{+},\hspace{0.25cm}d\mu_{1}(x)=x^{\alpha}e^{-x}dx,\hspace{0.2cm}\alpha>-1,
\hspace{0.25cm}M_{2}=\mathbb{D}\hspace{0.25cm}and\hspace{0.25cm}d\mu_{2}(z)=(1-\mid z\mid^{2})^{2\nu-2}d\lambda(z).$$
  Using (\ref{E2.47}), an orthonormal basis of the Hilbert space $L^{2}(\mathbb{R}_{+},x^{\alpha}e^{-x}dx)$ can be given in terms of the Laguerre polynomial by
 \begin{align}\label{E2.63}
\varphi_{j}^{(\alpha)}(x)=\sqrt{\frac{j!}{\Gamma(\alpha+j+1)}}L_{j}^{(\alpha)}(x),\hspace{0.25cm}j\in\mathbb{Z}_{+}.
\end{align}
An orthonormal basis \cite{Elw} of the RKHS  $A_{\ell}^{2,\nu}(\mathbb{D})$ can be expressed in terms of the Jacobi polynomials $P_{j}^{(\alpha,\beta)}(.)$  (\cite[p.116]{Bea}) as follows
\begin{align}\label{E2.64}
\nonumber \psi_{j}^{\nu,\ell}(z)&=(-1)^{j}\bigg(\frac{2(\nu-\ell)-1}{\pi}\bigg)^{\frac{1}{2}}\bigg(\frac{j!\Gamma(2(\nu-\ell)+\ell)}{\ell!\Gamma(2(\nu-\ell)+j)}\bigg
)^{\frac{1}{2}}
(1-\mid z\mid^{2})^{-\ell}(\overline{z})^{\ell-j}\\
&\times P_{j}^{(\ell-j,2(\nu-\ell)-1)}(1-2\mid z\mid^{2}),\hspace{0.25cm}j\in\mathbb{Z}_{+}.
\end{align}
From $(i)$ of theorem $(\ref{T2.1})$, the following series
\begin{align}\label{E2.65}
  \nonumber K(z,x)&=\displaystyle{\sum_{j\in\mathbb{Z}^{+}}}\overline{\varphi_{j}^{\alpha}(x)}\psi_{j}^{\nu,\ell}(z)\\
  \nonumber&=\bigg(\frac{2(\nu-\ell)-1}{\pi}\bigg)^{\frac{1}{2}}\bigg(\frac{\Gamma(2(\nu-\ell)+\ell)}{\ell!}\bigg)^{\frac{1}{2}}
  (1-\mid z\mid^{2})^{-\ell}(\overline{z})^{\ell}\\
  &\times\displaystyle{\sum_{j\in\mathbb{Z}^{+}}}
  \frac{j!(-\overline{z}^{-1})^{j}}{[\Gamma(1+j+\alpha)\Gamma(2(\nu-\ell)+j)]\frac{1}{2}}L_{j}^{(\alpha)}(x)
  P_{j}^{(\ell-j,2(\nu-\ell)-1)}(1-2\mid z\mid^{2}),
\end{align}
converges for each $z\in \mathbb{D}$ and $a.e-d\mu_{1}(x)$.\\
By using together the symmetry relation \cite[p.210]{Mag}
\begin{align}\label{E2.66}
P_{j}^{(\delta,\beta)}(x)=(-1)^{j}P_{j}^{(\beta,\delta)}(-x),
\end{align}
 for $\delta=\ell-j$, $\beta=2(\nu-\ell)-1$, $x=1-2\mid z\mid^{2}$ and the formula \cite[p.212]{Mag}
 \begin{align}\label{E2.67}
P_{j}^{(\beta,\gamma)}(t)=\tbinom{j+\beta}{j}\prescript{}{2}{F}_1^{}(-j,\beta+\gamma+j+1,1+\beta,\frac{1-t}{2}),
\end{align}
for $\gamma=\ell-j$, $\beta=2(\nu-\ell)-1$ and $t=2\mid z\mid^{2}-1$, the equation $(\ref{E2.65})$ becomes
\begin{align}\label{E2.68}
  \nonumber &K(z,x)=\bigg(\frac{2(\nu-\ell)-1}{\pi}\bigg)^{\frac{1}{2}}\bigg(\frac{\Gamma(2(\nu-\ell)+\ell)}{\ell!}\bigg)^{\frac{1}{2}}
  (1-\mid z\mid^{2})^{-\ell}(\overline{z})^{\ell}\\
  &\times\displaystyle{\sum_{j\in\mathbb{Z}^{+}}}
  \frac{j!(-\overline{z}^{-1})^{j}(-1)^{j}}{[\Gamma(1+j+\alpha)\Gamma(2(\nu-\ell)+j)]\frac{1}{2}}
  \tbinom{2(\nu-\ell)-1+j}{j}L_{j}^{(\alpha)}(x)\prescript{}{2}{F}_1^{}(-j,2(\nu-\ell)+\ell,2(\nu-\ell),1-\mid z\mid^{2}).
\end{align}
In order to use the bilateral generating function \cite[p.213]{Ran}
\begin{align}\label{E2.69}
\nonumber \sum_{j=0}^{+\infty}(\lambda)^j\prescript{}{2}{F}_1^{}(-j,b,1+\alpha,y)L_j^{(\alpha)}(x)&=
\frac{(1-\lambda)^{b-1-\alpha}}{(1-\lambda+\lambda y)^{b}} \exp (\frac{-x\lambda}{1-\lambda})\\
&\times_1 F_1(b,1+\alpha,\frac{xy\lambda}{(1-\lambda)(1-\lambda+\lambda y)}),
\end{align}
we chose $\alpha=2(\nu-\ell)-1$ in $(\ref{E2.68})$ which is positive (since $\nu>\frac{1}{2}$) and we set $\lambda=\frac{1}{\overline{z}}$, $b=2(\nu-\ell)+\ell$ and $y=1-\mid z\mid^{2}$. Then, after replacing  $\binom{2(\nu-\ell)-1+j}{j}$ by $\frac{\Gamma(2(\nu-\ell)+j)}{j!\Gamma(2(\nu-\ell))}$ and using the above generating formula, the involved series in the equation $(\ref{E2.68})$ becomes
\begin{align}\label{E2.70}
\nonumber&\frac{1}{\Gamma(2(\nu-\ell))}\sum_{j=0}^{+\infty}(\frac{1}{\overline{z}})^j\prescript{}{2}{F}_1^{}(-j,2(\nu-\ell)+\ell,2(\nu-\ell),1-|z|^2)L_j^{2(\nu-\ell)-1}(x)\\
&=\frac{1}{\Gamma(2(\nu-\ell))}\frac{(\overline{z}-1)^{\ell} \overline{z}^{-\ell}}{(1-z)^{2(\nu-\ell)+\ell}} \exp (\frac{-x}{\overline{z}-1})_1 F_1(2(\nu-\ell)+\ell,2(\nu-\ell),\frac{-x(1-|z|^2)}{|1-z|^2}).
\end{align}
Applying the formula \cite[p.267]{Mag}
\begin{align}\label{E2.71}
\prescript{}{1}{F}_1^{}(a,c,t)=e^{t}\prescript{}{1}{F}_1^{}(c-a,c,-t),
\end{align}
for $a=2(\nu-\ell)+\ell$, $c=2(\nu-\ell)$ and $t=\frac{-x(1-\mid z\mid^{2})}{\mid z-1\mid^{2}}$, the equation $(\ref{E2.70})$ takes the form
\begin{align}\label{E2.72}
\nonumber&\frac{1}{\Gamma(2(\nu-\ell))}\sum_{j=0}^{+\infty}(\frac{1}{\overline{z}})^j\prescript{}{2}{F}_1^{}(-j,2(\nu-\ell)+\ell,2(\nu-\ell),1-|z|^2)L_j^{2(\nu-\ell)-1}(x)\\
&=\frac{1}{\Gamma(2(\nu-\ell))}\frac{(\overline{z}-1)^{\ell} \overline{z}^{-\ell}}{(1-z)^{2(\nu-\ell)+\ell}} \exp (\frac{-x}{\overline{z}-1})\exp(\frac{-x(1-|z|^2)}{|1-z|^2})\prescript{}{1}{F}_1^{}(-\ell,2(\nu-\ell),\frac{x(1-|z|^2)}{|1-z|^2}).
\end{align}
Using a direct computation and the following relation \cite[p.287]{Mag}
\begin{align}\label{E2.73}
\prescript{}{1}{F}_1^{}(-n,1+\gamma,s)=\frac{n!\Gamma(1+\gamma)}{\Gamma(n+\gamma +1)}L_{n}^{(\gamma)}(s),
\end{align}
for $n=\ell$, $\gamma=2(\nu-\ell)-1$ and  $s=\frac{x(1-|z|^2)}{|1-z|^2}$, the function $k(z,x)$ defined in $(\ref{E2.68})$ becomes
\begin{align}\label{E2.74}
k(z,x)=\bigg(\frac{\ell!(2(\nu-\ell)-1)}{\pi\Gamma(2\nu-\ell)}\bigg)
^{\frac{1}{2}}(\frac{1-|z|^2}{|1-z|^2})^{-\ell}(1-z)^{-2\nu}\exp(\frac{xz}{z-1})L_{\ell}^{2(\nu-\ell)-1}(\frac{x(1-|z|^2)}{|1-z|^2}).
\end{align}
Then, by using $(ii)$ of theorem $(\ref{T2.1})$, we obtain the following isometry
\begin{align}\label{E2.75}
 \nonumber\tilde{B}_{\ell}^{\nu}:\hspace{0.25cm}L^{2}(\mathbb{R}^{*}_{+},& x^{2(\nu-\ell)-1}e^{-x}dx)\longrightarrow A_{\ell}^{2,\nu}(\mathbb{D})\\
 &\varphi\longmapsto \tilde{B}_{\ell}^{\nu}[\varphi],
 \end{align}
 where
 \begin{align}\label{E2.76}
 \tilde{B}_{\ell}^{\nu}[\varphi](z):&=\bigg(\frac{\ell!(2(\nu-\ell)-1)}{\pi\Gamma(2\nu-\ell)}\bigg)^{\frac{1}{2}}
 (\frac{1-\mid z\mid^{2}}{\mid1-z\mid^{2}})^{-\ell}(1-z)^{-2\nu}\\
 \nonumber&\times\int_{0}^{+\infty}\exp(\frac{xz}{z-1})L^{2(\nu-\ell)-1}_{\ell}(x\frac{1-\mid z\mid^{2}}{\mid1-z\mid^{2}})x^{2(\nu-\ell)-1}\exp(-x)\varphi(x)dx.
 \end{align}
Now, we consider the following canonical isometry
\begin{align}\label{E2.77}
 \nonumber T:\hspace{0.25cm}L^{2}(\mathbb{R}^{*}_{+},& \frac{dx}{x})\longrightarrow L^{2}(\mathbb{R}^{*}_{+}, x^{2(\nu-\ell)-1}e^{-x}dx)\\
 &\varphi\longmapsto x^{\ell-\nu}e^{\frac{x}{2}}\varphi.
 \end{align}
The composition $T\circ \tilde{B}_{\ell}^{\nu}$ gives our desired isometry $B_{\ell}^{\nu}$ defined in $(\ref{E2.62})$.\\
Moreover, by using $(iii)$ of theorem $(\ref{T2.1})$ the inverse isomorphism\\ $(B_{\ell}^{\nu})^{-1}=(\tilde{B}_{\ell}^{\nu})^{-1}\circ T^{-1}$ is given by the following integral transform
\begin{align}\label{E2.78}
A_{\ell}^{2,\nu}(\mathbb{D})&\longrightarrow L^{2}(\mathbb{R}^{*}_{+},\frac{dx}{x})\\
 \nonumber&\psi\longmapsto (B_{\ell}^{\nu})^{-1}[\psi],
 \end{align}
 where
 \begin{align}\label{E2.79}
 \nonumber(B_{\ell}^{\nu})^{-1}[\psi](x):=\bigg(\frac{\ell!(2(\nu-\ell)-1)}{\pi\Gamma(2\nu-\ell)}\bigg)^{\frac{1}{2}}
 \\
 &\hspace{-4cm}\hspace{-4cm}\times\int_{\mathbb{D}}x^{\nu-\ell}\bigg(\frac{1-\mid z\mid^{2}}{\mid1-z\mid^{2}}\bigg)^{-\ell}(1-\overline{z})^{-2\nu}
 \exp(-\frac{x}{2}(\frac{\overline{z}+1}{\overline{z}-1}))L^{2(\nu-\ell)-1}_{\ell}\bigg(x\frac{1-\mid z\mid^{2}}{\mid1-z\mid^{2}}\bigg)\psi(z)(1-\mid z\mid^{2})^{2\nu-2}d\lambda(z).
 \end{align}
\section{Dirichlet space and associated Bargmann transform}
\subsection{Dirichlet space as harmonic space}
In this subsection, we discuss the Dirichlet space on the unit disk. Before going ahead, we recall that the classical Dirichlet space $\mathcal{D}$ on the unit disk $\mathbb{D}$ is defined as the class of analytic functions
\begin{align}\label{E3.1}
f(z)=\sum_{n=0}^{+\infty}a_{n}z^{n},
\end{align}
for which the Dirichlet integral
\begin{align}\label{E3.2}
\mathbf{D}(f)=\frac{1}{\pi}\int_{\mathbb{D}}\mid f^{'}(z)\mid^{2}d\lambda(z)=\sum_{n=0}^{+\infty}n\mid a_{n}\mid^{2},
\end{align}
is finite \cite{Elf}.\\
The Dirichlet integral appears as the trace of the commutator of the Toeplitz operator $T_{f}$ and its adjoint $T^{*}_{f}$ in the case where $f$ is a bounded analytic function on $\mathbb{D}$ (\cite{Ara1}).\\
 The Dirichlet space intervenes with a great importance in operator theory, as well as in modern analysis and potential theory (see \cite{Elf} and the references therein).\\
The Dirichlet space $\mathcal{D}$ is characterized as the unique Hilbert space of analytic functions on $\mathbb{D}$ that are invariant by the M\"{o}bius group transforms \cite{Ara2}
\begin{align}\label{E3.3}
\varphi_{(a,b)}:=b\frac{z-a}{1-\overline{a}z}, \hspace{0.25cm}\mid a\mid<1, \hspace{0.25cm}\mid b\mid=1.
\end{align}
Also, in \cite{Wu}, the Dirichlet space  $\mathcal{D}$ can be viewed as a subspace of the Sobolev space $\mathscr{L}^{2,1}(\mathbb{D})$ defined as the completion of the space of the functions of class one on $\mathbb{D}$ under the norm
\begin{align}\label{E3.4}
\parallel f\parallel=\{\mid\int_{\mathbb{D}}fdA\mid^{2}+\int_{\mathbb{D}}(\mid\frac{\partial f}{\partial z}\mid+\mid\frac{\partial f}{\partial \overline{z}}\mid)dA\}^{\frac{1}{2}},
\end{align}
where $dA(z)=\frac{1}{\pi} d\lambda(z)$ and $\frac{\partial }{\partial z}$, $\frac{\partial }{\partial \overline{z}}$ are the classical Cauchy operators.\\
Here, our first goal is to give a new characterization for the  Dirichlet space $\mathcal{D}$.\\
Concretely, we shall prove that the Dirichlet space  $\mathcal{D}$ is the null space of a suitable partial differential operator.\\
The construction of the desired operator needs some harmonic analysis tools. Precisely, let us recall that the M\"{o}bius group $G=SU(1,1)$ consists of all $2\times2$ complex matrices
\begin{align}\label{E3.5}
g=\left(
    \begin{array}{cc}
      \alpha & \beta \\\\
      \overline{\beta} & \overline{\alpha} \\
    \end{array}
  \right),\hspace{0.25cm} \mid\alpha\mid^{2}-\mid\beta\mid^{2}=1.
\end{align}
It acts on $\mathbb{D}$ by the homographical maps
\begin{align}\label{E3.6}
z\longmapsto g.z=g(z)=\frac{\alpha z+\beta}{\overline{\beta}z+\overline{\alpha}}, \hspace{0.25cm} z\in \mathbb{D}.
\end{align}
It is noted that all holomorphic automorphisms on $\mathbb{D}$ can be obtained by such a symmetry. The Lie algebra  $su(1,1)$ of the group $SU(1,1)$ is generated by the basis
\begin{align}\label{E3.7}
Z=\left(
    \begin{array}{cc}
      i & 0 \\
      0 & -i \\
    \end{array}
  \right),\hspace{0.25cm}
A=\left(
    \begin{array}{cc}
      0 & 1 \\
      1 & 0\\
    \end{array}
  \right),\hspace{0.25cm}
  B=\left(
    \begin{array}{cc}
      0 & i \\
      -i & 0\\
    \end{array}
  \right).
\end{align}
The associated Casimir element ( see \cite{Far} for the general theory) is given by \cite{Hep}
\begin{align}\label{E3.8}
\Box=Z^{2}-A^{2}-B^{2}.
\end{align}
Let $\gamma$ be a fixed real parameter. Consider the Hilbert space $L^{2}(\mathbb{D},d\mu_{\gamma})$ where
$d\mu_{\gamma}(z)=(1-\mid z\mid^{2})^{\gamma-2}d\lambda(z)$ and  $d\lambda(z)=dxdy$ is the Lebesgue measure on $\mathbb{D}$.\\
For $g\in SU(1,1)$, we consider the following transformation
\begin{align}\label{E3.9}
T^{\gamma}(g):\hspace{0.25cm}f(z)\longmapsto f(g(z))\{g^{'}(z)\}^{\frac{\gamma}{2}}=(\overline{\beta}z+\overline{\alpha})^{-\gamma}f(\frac{\alpha z+\beta}{\overline{\beta}z+\overline{\alpha}}).
\end{align}
In the case where $\gamma\in\mathbb{R}$$\backslash$$\mathbb{Z}$, $T^{\gamma}$ gives a projective representation of the group $SU(1,1)$ and a genuine representation of the universal covering group of $SU(1,1)$. In the case $\gamma\in \mathbb{Z}$, $T^{\gamma}$ gives a continuous unitary representation of the group $SU(1,1)$. The transformation $T^{\gamma}$ induces a representation of the Lie algebra $su(1,1)$ and its universal enveloping algebra on the space of $C^{\infty}-$vectors for $T^{\gamma}$, which is also denoted by $T^{\gamma}$. By a direct computation, the corresponding vectors fields are
 \begin{align}\label{E3.10}
 T^{\gamma}(Z)=2iz\frac{\partial}{\partial z}-2i\overline{z}\frac{\partial}{\partial \overline{z}}+i\gamma,
 \end{align}
\begin{align}\label{E3.11}
 T^{\gamma}(A)=(1-z^{2})\frac{\partial}{\partial z}+(1-\overline{z}^{2})\frac{\partial}{\partial \overline{z}}-\gamma z,
 \end{align}
 \begin{align}\label{E3.12}
 T^{\gamma}(B)=i(1+z^{2})\frac{\partial}{\partial z}-i(1+\overline{z}^{2})\frac{\partial}{\partial \overline{z}}+i\gamma\overline{z}.
 \end{align}
Therefore, the Casimir operator (or the invariant Laplacian) is given by
\begin{align}\label{E3.13}
\square_{\gamma}=T^{\gamma}(\square)=-4(1-\mid z\mid^{2})\{(1-\mid z\mid^{2})\frac{\partial^{2}}{\partial z\partial \overline{z}}
-\gamma\overline{z}\frac{\partial}{\partial\overline{z}}\}-\gamma^{2}+2\gamma.
\end{align}
This class of operators has been extensively considered by several authors in many subjects. In one dimension case with using the same class of such operators expressed in complex coordinate of the upper half plan, Fay \cite{Fay} has studied the spectral theory of these operators on the class of authomorphic functions associated with a Fuchsian group. The classical $L^{2}-$theory follows for the trivial group $G=\{e\}$ where $e$ is the identity element of $G$. In higher dimensions, G. Zhang \cite{Zha} has studied in an harmonic analysis framework the irreducible decomposition of a suitable representation of the group $SU(1,n)$. He determined the discrete part which is associated with the eigenspace of such operator. Also, he found the reproducing kernels of these eigenspaces.\\
Here, the above operators given in $(\ref{E3.13})$ will play an important role in the characterization of the Dirichlet space as harmonic space of a second order  elliptic partial differential operator. Concretely, we well deal with the operator $\square_{2}$. We have the following proposition.
\begin{proposition} \label{P3.1} Let $\tilde{\Delta}$ be the partial differential operator defined by
\begin{align}\label{E3.14}
 \tilde{\Delta}:=\square_{2}=-4(1-\mid z\mid^{2})[(1-\mid z\mid^{2})\frac{\partial^{2}}{\partial z\partial\overline{z}}-2\overline{z}\frac{\partial}{\partial\overline{z}}],
 \end{align}
 acting on the Hilbert space $L^{2}(\mathbb{D},d\lambda(z))$, with the dense domain
 \begin{align}\label{E3.15}
 D(\tilde{\Delta}):=\{F\in L^{2}(\mathbb{D},d\lambda(z)),\hspace{0.25cm}\tilde{\Delta}F\in L^{2}(\mathbb{D},d\lambda(z))\hspace{0.25cm}and\hspace{0.25cm}\frac{\partial F}{\partial z}\in L^{2}(\mathbb{D},d\lambda(z))\}.
 \end{align}
 Then, $\mathcal{D}$ is the null space of $\tilde{\Delta}$
 \begin{align}\label{E3.16}
 \mathcal{D}=\{F\in D(\tilde{\Delta}),\hspace{0.25cm}\tilde{\Delta}F=0\}.
 \end{align}
 \end{proposition}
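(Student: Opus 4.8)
The plan is to prove the two inclusions of $(\ref{E3.16})$ separately, the inclusion $\mathcal{D}\subseteq\{F\in D(\tilde{\Delta}):\tilde{\Delta}F=0\}$ being immediate and the reverse one carrying all the content. First I would observe that any $f\in\mathcal{D}$ is holomorphic, so $\partial f/\partial\bar z=0$; since every term of the operator in $(\ref{E3.14})$ contains a factor $\partial/\partial\bar z$, this gives $\tilde{\Delta}f=0\in L^2(\mathbb{D},d\lambda)$ at once. To place $f$ in the domain $(\ref{E3.15})$ I would note that $\partial f/\partial z=f'\in L^2(\mathbb{D},d\lambda)$ is exactly the finiteness of the Dirichlet integral $(\ref{E3.2})$, and that writing $f=\sum_n a_nz^n$ one has $\|f\|_{L^2(\mathbb{D},d\lambda)}^2=\pi\sum_{n\ge0}|a_n|^2/(n+1)\le\pi|a_0|^2+\pi\sum_{n\ge1}n|a_n|^2<\infty$, so that $f\in L^2(\mathbb{D},d\lambda)$ too.

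For the reverse inclusion I would start from $\tilde{\Delta}F=0$. Since $1-|z|^2>0$ on $\mathbb{D}$, this is equivalent to $(1-|z|^2)\,\partial_z\partial_{\bar z}F=2\bar z\,\partial_{\bar z}F$, an equation whose principal part $(1-|z|^2)\,\partial_z\partial_{\bar z}$ is elliptic with real-analytic coefficients; by elliptic regularity $F$ is real-analytic on $\mathbb{D}$ and admits a locally uniformly convergent expansion $F=\sum_{a,b\ge0}c_{ab}\,z^a\bar z^b$. A direct computation then yields the three-term action
\[ \tilde{\Delta}(z^a\bar z^b)=-4ab\,z^{a-1}\bar z^{b-1}+8b(a+1)\,z^a\bar z^b-4b(a+2)\,z^{a+1}\bar z^{b+1}. \]
Collecting the coefficient of each monomial $z^a\bar z^b$ decouples the equation $\tilde{\Delta}F=0$ along the diagonals $a-b=k$, and setting $u_b$ equal to $b$ times the coefficient of the monomial on that diagonal reduces it to the discrete Laplace equation $u_{b+1}-2u_b+u_{b-1}=0$; hence $u_b$ is affine in $b$ on each diagonal.

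I would then split into cases. For $k\ge0$ the lowest monomial $z^k$ (the $b=0$ term) forces two consecutive values of $u_b$ to vanish, so $u_b\equiv0$ and only the holomorphic term $c_{k,0}z^k$ survives. For $k<0$, say $k=-\ell$, the relation coming from the lowest monomial $\bar z^\ell$ leaves a one-parameter family $c_{a,a+\ell}=p\,(a+1)/(a+\ell)$; the corresponding angular sector of $F$ equals $\bar z^\ell\sum_{a\ge0}c_{a,a+\ell}\,r^{2a}$, and since these coefficients tend to the nonzero constant $p$, this sector grows like $p\,\bar z^\ell/(1-|z|^2)$ near $\partial\mathbb{D}$ and so fails to be square-integrable against $d\lambda$. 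Because $F\in L^2(\mathbb{D},d\lambda)$ and the angular sectors $e^{ik\theta}$ are mutually orthogonal, each sector of $F$ is itself in $L^2$, which forces $p=0$ for every $k<0$. Thus $F=\sum_{k\ge0}c_{k,0}z^k$ is holomorphic, and the remaining domain hypothesis $\partial F/\partial z\in L^2(\mathbb{D},d\lambda)$ says precisely that the Dirichlet integral $(\ref{E3.2})$ is finite, giving $F\in\mathcal{D}$.

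The hard part will be this reverse inclusion, and within it the case $k<0$: one must show that the non-holomorphic solution families are genuinely excluded by square-integrability, i.e. quantify the boundary blow-up of order $(1-|z|^2)^{-1}$ and confirm the resulting radial integral $\int_0^1 r^{2\ell+1}(1-r^2)^{-2}\,dr$ diverges. The two supporting technical points are the use of elliptic regularity to legitimate the monomial expansion and its term-by-term differentiation on compact subsets, and the orthogonality of the angular sectors, which is what lets me pass from $F\in L^2$ to each individual sector being in $L^2$ and thereby kill the spurious parameter $p$.
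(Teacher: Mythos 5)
Your forward inclusion is essentially the paper's (the same Parseval computation $\|f\|_{L^2(\mathbb{D},d\lambda)}^{2}=\pi\sum_{n\ge 0}|a_n|^2/(n+1)$, plus the remark that finiteness of the Dirichlet integral is exactly $\partial f/\partial z\in L^2$), but your reverse inclusion takes a genuinely different route. The paper never solves the equation $\tilde{\Delta}F=0$ directly: it observes that $\tilde{\Delta}$ has the same formal expression as the operator $H_{1}$ of Section 2, quotes the literature \cite{Elw,Hep,Zha} for the fact that the eigenspace $\mathcal{A}^{2,1}_{0}(\mathbb{D})=\{F\in L^{2}(\mathbb{D},d\lambda):H_{1}F=0\}$ is a R.K.H.S whose reproducing kernel is the Bergman kernel $\frac{1}{\pi}(1-z\overline{w})^{-2}$, and then invokes the uniqueness theorem for reproducing kernels (Proposition 3.3 of \cite{Pau}) to conclude $\mathcal{A}^{2,1}_{0}(\mathbb{D})=\mathcal{A}^{2}(\mathbb{D})$, whence holomorphy of any null solution. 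You instead re-derive this key fact by hand. Your computation is correct: $\tilde{\Delta}(z^{a}\bar z^{b})=-4ab\,z^{a-1}\bar z^{b-1}+8b(a+1)\,z^{a}\bar z^{b}-4b(a+2)\,z^{a+1}\bar z^{b+1}$ is right, collecting coefficients and dividing by $-4(a+1)$ gives $(b+1)c_{a+1,b+1}-2bc_{a,b}+(b-1)c_{a-1,b-1}=0$, so $u_{b}=b\,c_{b+k,b}$ does satisfy the discrete Laplace equation, and the two solution families (only $c_{k,0}z^{k}$ for $k\ge 0$; $c_{a,a+\ell}=p(a+1)/(a+\ell)$ for $k=-\ell<0$, killed by the divergence of $\int_{0}^{1}r^{2\ell+1}(1-r^{2})^{-2}dr$) are exactly right. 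What your approach buys is self-containedness: the hard analytic input that the paper outsources to the weighted Plancherel literature is proved from scratch. What the paper's approach buys is brevity and immunity to the regularity issue discussed next.

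There is, however, one genuine gap as written: the assertion that elliptic regularity yields an expansion $F=\sum_{a,b}c_{ab}z^{a}\bar z^{b}$ converging locally uniformly on all of $\mathbb{D}$. Analytic hypoellipticity does give that $F$ is real-analytic on $\mathbb{D}$ (the operator is elliptic in the interior with polynomial coefficients), but real-analyticity does not give a single globally convergent monomial expansion: the Taylor series at the origin can have small radius of convergence, e.g. $F(z)=(\tfrac14+|z|^{2})^{-1}$ is real-analytic on $\mathbb{D}$ while its expansion $4\sum_{n}(-4)^{n}|z|^{2n}$ converges only for $|z|<\tfrac12$. This matters because your elimination of the non-holomorphic families rests entirely on their blow-up near $|z|=1$, i.e.\ outside whatever disk of convergence you actually have. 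The repair is standard and uses the rotation invariance you already exploit implicitly: since $\tilde{\Delta}$ commutes with rotations, each Fourier mode $F_{k}(re^{i\theta})=e^{ik\theta}f_{k}(r)$ of $F$ is again an $L^{2}$ null solution (and $\|F_{k}\|_{L^{2}}\le\|F\|_{L^{2}}$ by the orthogonality you invoke), hence real-analytic on $\mathbb{D}$; your recurrence identifies $F_{k}$ near the origin with $c_{k}z^{k}$ for $k\ge 0$, respectively with $p_{\ell}\bar z^{\ell}h_{\ell}(|z|^{2})$, $h_{\ell}(s)=\sum_{a\ge 0}\frac{a+1}{a+\ell}s^{a}$, for $k=-\ell<0$; and since these candidate functions are themselves real-analytic on all of $\mathbb{D}$ (the series $h_{\ell}$ has radius of convergence $1$), the identity theorem for real-analytic functions upgrades the local identification to all of $\mathbb{D}$, after which your boundary estimate and the divergent radial integral legitimately force $p_{\ell}=0$. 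Equivalently, one can note that each $f_{k}$ solves a radial ODE whose only singular points are $r=0$ and $r^{2}=1$, so its Frobenius series converges on the whole of $[0,1)$. With this one repair, your proof is complete and constitutes a valid, self-contained alternative to the paper's argument.
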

\begin{proof}Let $f(z)=\sum_{j=0}^{+\infty}a_{j}z^{j}$ be a holomorphic function on the unit disk. By using the polar coordinates, we have
\begin{align}\label{E3.17}
\int_{\mathbb{D}}\mid f(z)\mid^{2}d\lambda(z)=2\pi \int_{0}^{1}\bigg[\int_{0}^{2\pi}\mid \sum_{j=0}^{+\infty} a_{j} r^{j}e^{ij\theta}\mid^{2}\frac{d\theta}{2\pi}\bigg]rdr.
\end{align}
By Parseval's formula, for each $r\in[0,1]$, we obtain
\begin{align}\label{E3.18}
\int_{0}^{2\pi}\mid \sum_{j=0}^{+\infty} a_{j} r^{j}e^{ij\theta}\mid^{2}\frac{d\theta}{2\pi}=\sum_{j=0}^{+\infty}r^{2j}\mid a_{j}\mid^{2}.
\end{align}
Inserting the last equality in the right hand side of the equation $(\ref{E3.17})$ and using the monotone convergence theorem, the equation $(\ref{E3.17})$ becomes
\begin{align}\label{E3.19}
\int_{\mathbb{D}}\mid f(z)\mid^{2}d\lambda(z)=\pi\sum_{j=0}^{+\infty}\frac{\mid a_{j}\mid^{2}}{j+1}.
\end{align}
Using the following inequality
\begin{align}\label{E3.20}
\pi\sum_{j=1}^{+\infty}\frac{\mid a_{j}\mid^{2}}{j+1}\leq\pi\sum_{j=1}^{+\infty}j\mid a_{j}\mid^{2},
\end{align}
we obtain the inclusion
\begin{align}\label{E3.21}
\mathcal{D}\subset\{f\hspace{0.2cm} \mathbb{D}\longmapsto \mathbb{C}\hspace{0.2cm}holomorphic,\hspace{0.25cm}\int_{\mathbb{D}}\mid f(z)\mid^{2}d\lambda(z)<\infty\}.
\end{align}
Recall that the Bergman space on the unit disk
\begin{align}\label{E3.22}
\mathcal{A}^{2}(\mathbb{D})=\{f\hspace{0.2cm} \mathbb{D}\longmapsto \mathbb{C}\hspace{0.2cm}holomorphic,\hspace{0.25cm}\int_{\mathbb{D}}\mid f(z)\mid^{2}d\lambda(z)<\infty\},
\end{align}
is a R.K.H.S with the following reproducing kernel
\begin{align}\label{E3.23}
K(z,w)=\frac{1}{\pi}(\frac{1}{1-z\overline{w}})^{2}.
\end{align}
Two references, in this context, are \cite{Hed,Zhu2}.\\
Now, we return back to the eigenspace  $\mathcal{A}^{2,\nu}_{\ell}(\mathbb{D})$ defined in the equation (\ref{E2.56}) in which we set $\nu=1$ and $\ell=0$. From the equation (\ref{E2.60}), it is not difficult to see that the reproducing kernel of the space $\mathcal{A}^{2,1}_{0}(\mathbb{D})$ is
\begin{align}\label{E3.24}
K^{1}_{0}(z,w)=\frac{1}{\pi}(\frac{1}{1-z\overline{w}})^{2}.
\end{align}
The both spaces $\mathcal{A}^{2}(\mathbb{D})$ and $\mathcal{A}^{2,1}_{0}(\mathbb{D})$ have the same reproducing kernel. Using the proposition $3.3$ in $\cite[p.9]{Pau}$, we get
\begin{align}\label{E3.25}
\mathcal{A}^{2}(\mathbb{D})=\mathcal{A}^{2,1}_{0}(\mathbb{D}):=\{F\in L^{2}(\mathbb{D},d\lambda(z)),\hspace{0.2cm}H_{1}F=0\},
\end{align}
where $H_{1}$ is the operator defined by $(\ref{E2.56})$ and $(\ref{E2.57})$ in which we have set $\nu=1$.\\
Then, the inclusion $(\ref{E3.21})$ combined with the equalities $(\ref{E3.22})$, $(\ref{E3.25})$ and the fact $\tilde{\Delta}F=H_{1}F$ lead to the following inclusion
\begin{align}\label{E3.26}
\hspace{-0.1cm}\mathcal{D}\subset\{F\in L^{2}(\mathbb{D},d\lambda(z)),\hspace{0.1cm}\frac{\partial F}{\partial z}\in L^{2}(\mathbb{D},d\lambda(z))\hspace{0.2cm}and\hspace{0.2cm} \tilde{\Delta}F=0\}
=\{F\in D(\tilde{\Delta}),\hspace{0.2cm}\tilde{\Delta}F=0\}.
\end{align}
Conversely, let $F\in D(\tilde{\Delta})$ such that  $\tilde{\Delta}F=0=H_{1}F$. Then, we have
\begin{align}\label{E3.27}
\frac{\partial F}{\partial z}\in L^{2}(\mathbb{D},d\lambda(z))\hspace{0.2cm}and\hspace{0.2cm} F\in\{F\in L^{2}(\mathbb{D},d\lambda(z)),\hspace{0.2cm}H_{1}F=0\}=\mathcal{A}^{2,1}_{0}(\mathbb{D}).
\end{align}
Finally, by using $(\ref{E3.25})$ the holomorphy of $F$ follows. Then, we get $F$ is an element of the Dirichlet space. Hence, we obtain
\begin{align}\label{E3.28}
 \mathcal{D}=\{F\in D(\tilde{\Delta}),\hspace{0.2cm}\tilde{\Delta}F=0\}.
 \end{align}
 The proof is closed.
\end{proof}
Moreover, we can give some spectral results for the operator $\tilde{\Delta}$. Precisely, by using the same notations as in the proposition $(\ref{P3.1})$, we can state the following.
\begin{proposition}\label{P3.2}
\begin{description}
  \item[(i)] The operator $\tilde{\Delta}$ is closable and admits a self-adjoint extension.
  \item[(ii)] The operator $\tilde{\Delta}$ is an unbounded non self-adjoint operator.
  \item[(iii)] $0$ belongs to the point spectrum of  $\tilde{\Delta}.$
\end{description}
\end{proposition}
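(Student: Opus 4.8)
The plan is to deduce all three items from a single structural observation: $\tilde{\Delta}$ is exactly the operator $H_{1}$ of $(\ref{E2.57})$ (take $\nu=1$; the additive constant $-\gamma^{2}+2\gamma$ in $\square_{\gamma}$ vanishes at $\gamma=2$, and the weight $(1-|z|^{2})^{2\nu-2}$ becomes $1$, so the ambient space $L^{2,1}(\mathbb{D})$ is precisely $L^{2}(\mathbb{D},d\lambda(z))$), but carrying the strictly smaller domain $(\ref{E3.15})$. Indeed, since the two differential expressions coincide, $\{F\in L^{2}:\tilde{\Delta}F\in L^{2}\}=D(H_{1})$, so $D(\tilde{\Delta})=D(H_{1})\cap\{F:\partial F/\partial z\in L^{2}(\mathbb{D},d\lambda)\}$ and hence $\tilde{\Delta}\subseteq H_{1}$. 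By the statement recalled around $(\ref{E2.59})$, $H_{1}$ is self-adjoint on its maximal domain $(\ref{E2.58})$.

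For (i) this containment settles everything at once. As $H_{1}$ is self-adjoint it is closed, hence closable, and a restriction of a closable operator is closable (equivalently $\tilde{\Delta}^{*}\supseteq H_{1}^{*}=H_{1}$ is densely defined), so $\tilde{\Delta}$ is closable; moreover $H_{1}$ is itself a self-adjoint operator extending $\tilde{\Delta}$, which is the required self-adjoint extension.

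For the non-self-adjointness in (ii) I would show that the inclusion $\tilde{\Delta}\subseteq H_{1}$ is \emph{proper}, for which it suffices to exhibit one $F\in D(H_{1})\setminus D(\tilde{\Delta})$. I take $F(z)=(1-|z|^{2})^{-s}$ with $0<s<\tfrac12$ (e.g. $s=\tfrac14$): a direct computation gives $\partial F/\partial z=s\bar z(1-|z|^{2})^{-s-1}$ and $H_{1}F=-4s(1-|z|^{2})^{1-s}-4s(s-1)|z|^{2}(1-|z|^{2})^{-s}$, and using $\int_{\mathbb{D}}(1-|z|^{2})^{-\beta}\,d\lambda<\infty\iff\beta<1$ one checks that $F,H_{1}F\in L^{2}(\mathbb{D},d\lambda)$ while $\partial F/\partial z\notin L^{2}(\mathbb{D},d\lambda)$; thus $F\in D(H_{1})\setminus D(\tilde{\Delta})$ and $\tilde{\Delta}\subsetneq H_{1}$. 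Taking adjoints reverses the inclusion, giving $\tilde{\Delta}^{*}\supseteq H_{1}^{*}=H_{1}\supsetneq\tilde{\Delta}$, whence $\tilde{\Delta}\neq\tilde{\Delta}^{*}$ (equivalently, a proper symmetric restriction of a self-adjoint operator cannot be maximal symmetric, so it is not self-adjoint). For unboundedness I use that $C^{\infty}_{c}(\mathbb{D})\subseteq D(\tilde{\Delta})$ (all three defining conditions hold trivially for smooth compactly supported functions) together with the high-frequency test functions $F_{n}(z)=\psi(z)e^{inx}$, $x=\operatorname{Re}z$, for a fixed $\psi\in C^{\infty}_{c}(\mathbb{D})$. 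Since $\partial^{2}/\partial z\partial\bar z=\tfrac14\Delta$, the leading term of $\tilde{\Delta}F_{n}$ is $(1-|z|^{2})^{2}n^{2}\psi e^{inx}$, whose coefficient is bounded below on the compact set $\operatorname{supp}\psi$; hence $\|F_{n}\|=\|\psi\|$ stays fixed while $\|\tilde{\Delta}F_{n}\|\gtrsim n^{2}\to\infty$, so $\tilde{\Delta}$ is unbounded.

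Finally (iii) is immediate: the constant function $1$ lies in $D(\tilde{\Delta})$ (it is bounded, $\partial 1/\partial z=0$, and $\tilde{\Delta}1=0$, all in $L^{2}$) and satisfies $\tilde{\Delta}1=0$, so $0$ is an eigenvalue; equivalently, by Proposition $(\ref{P3.1})$ the null space of $\tilde{\Delta}$ is the Dirichlet space $\mathcal{D}\neq\{0\}$. The only genuinely computational point, and the main obstacle, is verifying that the witness $F=(1-|z|^{2})^{-s}$ belongs to $D(H_{1})$ but not to $D(\tilde{\Delta})$; the conceptual crux is recognizing $\tilde{\Delta}$ as a proper symmetric restriction of the self-adjoint $H_{1}$, after which (i)--(iii) follow from standard operator theory.
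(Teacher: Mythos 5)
Your proof is correct and rests on the same structural observation as the paper's: with $\nu=1$ the operator $H_{1}$ of $(\ref{E2.57})$--$(\ref{E2.58})$ is self-adjoint, and $\tilde{\Delta}$ is the same differential expression restricted to the smaller domain $(\ref{E3.15})$, so $\tilde{\Delta}\subset H_{1}$; items (i) and (iii) then follow exactly as in the paper (for (iii) the paper likewise just reinterprets Proposition \ref{P3.1}). The difference is in (ii), where you go further than the paper on two points. First, the paper merely asserts ``it is not hard to see that $D(\tilde{\Delta})\neq D(H_{1})$'' and then derives the contradiction $H_{1}=(H_{1})^{*}\subset(\tilde{\Delta})^{*}=\tilde{\Delta}$; your adjoint-reversal argument $\tilde{\Delta}^{*}\supseteq H_{1}^{*}=H_{1}\supsetneq\tilde{\Delta}$ is the same mechanism, but you actually exhibit a witness for the strict inclusion, $F(z)=(1-|z|^{2})^{-s}$ with $0<s<\tfrac{1}{2}$, and your computations check out: $F$ and $H_{1}F=-4s(1-|z|^{2})^{1-s}-4s(s-1)|z|^{2}(1-|z|^{2})^{-s}$ are square integrable because $2s<1$, while $|\partial F/\partial z|^{2}=s^{2}|z|^{2}(1-|z|^{2})^{-2s-2}$ fails to be integrable near $\partial\mathbb{D}$, so $F\in D(H_{1})\setminus D(\tilde{\Delta})$. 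Second, you prove unboundedness via the oscillating functions $\psi(z)e^{inx}$ supported compactly in $\mathbb{D}$, a point on which the paper's proof is entirely silent even though the statement of (ii) claims it. So your proposal is not merely correct: it closes two gaps that the paper leaves open, at the cost of the explicit computation with the witness $F$, which the paper avoids by assertion and by citing Schm\"udgen's extension criterion.
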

\begin{proof}
We return back to the self-adjoint of operator $H_{\nu}$ defined by $(\ref{E2.57})$ and  $(\ref{E2.58})$, in which we set $\nu=1$. It is easy to see that the operator $H_{1}$ is an extension of the operator $\tilde{\Delta}$ (See \cite[p.4]{Kon} for general theory). We write this fact by
\begin{align}\label{E3.29}
\tilde{\Delta}\subset H_{1}.
\end{align}
Thus $(i)$ is proved.\\
It is not hard to see that $D(\tilde{\Delta})\neq D(H_{1}).$ Then, if we suppose that  $\tilde{\Delta}$ is self-adjoint, we obtain from $(\ref{E3.29})$ and the proposition $(1.6)$ in \cite[p.9]{Kon} that
\begin{align}\label{E3.30}
H_{1}=(H_{1})^{*}\subset(\tilde{\Delta})^{*}=\tilde{\Delta}.
\end{align}
Then, by $(\ref{E3.29})$ and $(\ref{E3.30})$, we get $H_{1}=\tilde{\Delta}$ which implies that $D(\tilde{\Delta})=D(H_{1}).$ This contradicts the fact $D(\tilde{\Delta})\neq D(H_{1})$. Thus, $(ii)$ is proved.\\
The point $(iii)$ is just an other way to state the proposition $(\ref{P3.1})$. The proof is closed.
\end{proof}
\subsection{Bargmann transform associated with Dirichlet space}
Now, we turn to the Dirichlet space. The usual convenient norm associated with this space is given by
\begin{align}\label{E3.31}
 \mathcal{N}(f)=\mid f(0) \mid^{2}+\int_{\mathbb{D}}\mid f^{'}(z)\mid^{2}d\lambda(z).
\end{align}
The corresponding reproducing kernel \cite[p.51]{Arc} reads as
\begin{align}\label{E3.32}
K(z,w)=\frac{1}{\pi}(1+\log(\frac{1}{1-z\overline{w}})).
\end{align}
The following result gives a Bargmann transform associated with the classical Dirichlet space.
\begin{proposition}\label{P3.3} For the classical Dirichlet space, we have the following associated Bargmann transform
 \begin{align}\label{E3.33}
 \nonumber L^{2}(\mathbb{R}_{+},& e^{-x}dx)\longrightarrow \mathcal{D}\\
 &f\longmapsto B[f](z):=\int_{0}^{+\infty}K(z,x)f(x)dx,
 \end{align}
  where,
 \begin{align}\label{E3.34}
 K(z,x)=\frac{1}{\sqrt{\pi}}[1+\frac{z}{\Gamma(\frac{3}{2})}
 \int_{0}^{+\infty}\sqrt{t}e^{-t}(1-ze^{-t})^{-2}\exp(\frac{-xze^{-t}}{1-ze^{-t}})L_{1}(\frac{x}{1-ze^{-t}})dt].
 \end{align}
  \end{proposition}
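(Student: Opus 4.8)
The plan is to apply Theorem $(\ref{T2.2})$ directly, taking $M_{1}=\mathbb{R}_{+}$, $d\mu_{1}(x)=e^{-x}dx$ and $\mathcal{A}=\mathcal{D}$, so that, unlike the earlier examples, no intermediate multiplication operator is needed. First I would fix the two orthonormal bases. On the source side, specializing the normalized Laguerre family $(\ref{E2.47})$ to $\delta=0$ gives $\varphi_{j}(x)=L_{j}^{(0)}(x)=L_{j}(x)$, an orthonormal basis of $L^{2}(\mathbb{R}_{+},e^{-x}dx)$. On the target side, the monomials $z^{j}$ are mutually orthogonal in $\mathcal{D}$; the normalization producing $\psi_{0}(z)=\pi^{-1/2}$ and $\psi_{j}(z)=(j\pi)^{-1/2}z^{j}$ for $j\geq 1$ is the one fixed by requiring that the Papadakis reconstruction $\sum_{j}\psi_{j}(z)\overline{\psi_{j}(w)}$ return the reproducing kernel $(\ref{E3.32})$, which simultaneously pins down the basis and confirms its completeness.

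With these bases, parts $(i)$ and $(ii)$ of Theorem $(\ref{T2.2})$ immediately yield that $B$ is a unitary isomorphism onto $\mathcal{D}$, so the only substantial work is to put the kernel $K(z,x)=\sum_{j\geq 0}\overline{\varphi_{j}(x)}\psi_{j}(z)$ into the closed form $(\ref{E3.34})$. Since the Laguerre polynomials are real, this reads
\begin{equation*}
K(z,x)=\frac{1}{\sqrt{\pi}}\Big[1+\sum_{j\geq 1}\frac{z^{j}}{\sqrt{j}}L_{j}(x)\Big],
\end{equation*}
so everything reduces to summing $S(z,x)=\sum_{j\geq 1}j^{-1/2}z^{j}L_{j}(x)$ in closed form.

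The key computational device is the integral representation $j^{-1/2}=\frac{j}{\Gamma(\frac{3}{2})}\int_{0}^{+\infty}\sqrt{t}\,e^{-jt}dt$, the point being that the extra factor $j$ it carries is exactly what will later generate the polynomial $L_{1}$. Substituting this and interchanging sum and integral gives
\begin{equation*}
S(z,x)=\frac{1}{\Gamma(\frac{3}{2})}\int_{0}^{+\infty}\sqrt{t}\,\sum_{j\geq 1}j\,(ze^{-t})^{j}L_{j}(x)\,dt.
\end{equation*}
Writing $G(u,x)=\sum_{j\geq 0}u^{j}L_{j}(x)=(1-u)^{-1}\exp(-\tfrac{xu}{1-u})$ from $(\ref{E2.51})$ with $\delta=0$, one has $\sum_{j\geq 1}j\,u^{j}L_{j}(x)=u\,\partial_{u}G$, and a short computation using $L_{1}(y)=1-y$ yields $u\,\partial_{u}G=u(1-u)^{-2}\exp(-\tfrac{xu}{1-u})L_{1}(\tfrac{x}{1-u})$. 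Setting $u=ze^{-t}$ and taking the factor $z$ outside the integral reproduces precisely the bracketed expression in $(\ref{E3.34})$, completing the identification of the kernel and hence the proof.

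I expect the main obstacle to be the rigorous justification of this summability step, namely the $\texttt{a.e.}-d\mu_{1}$ convergence of the defining series of $K(z,\cdot)$ together with the interchange of $\sum_{j}$ and $\int_{0}^{+\infty}\sqrt{t}\,e^{-jt}dt$. This I would handle by restricting to compact subsets $\mid z\mid\leq r<1$ and dominating the Laguerre sum through the closed form of $G(u,x)$ combined with standard growth bounds for $L_{j}$, so that dominated convergence applies; part $(i)$ of Theorem $(\ref{T2.2})$ already supplies the pointwise convergence that makes these manipulations legitimate.
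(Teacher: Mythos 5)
Your proposal is correct and takes essentially the same route as the paper: the same choice of bases $\varphi_{j}=L_{j}$ and $\psi_{0}=\pi^{-1/2}$, $\psi_{j}=(\pi j)^{-1/2}z^{j}$, the same Gamma-integral trick $j^{-1/2}=\frac{j}{\Gamma(\frac{3}{2})}\int_{0}^{+\infty}\sqrt{t}\,e^{-jt}dt$ carrying the extra factor $j$ that produces $L_{1}$, and the same dominated-convergence scheme (truncation of the $t$-integral plus Laguerre growth bounds) to justify the interchange of sum and integral. The only cosmetic difference is that you obtain the summation identity $\sum_{j\geq 1}j\,u^{j}L_{j}(x)=u(1-u)^{-2}\exp(-\tfrac{xu}{1-u})L_{1}(\tfrac{x}{1-u})$ by differentiating the basic generating function $(\ref{E2.51})$, whereas the paper cites it directly as the bilateral formula $(\ref{E3.47})$; these are the same identity.
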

  \begin{proof}  In order to build the above isometry, we consider
  $M_{1}=\mathbb{R}_{+},$ $d\mu_{1}(x)=e^{-x}dx,$ and $\mathcal{A}=\mathcal{D},$ where $<,>_{\mathcal{A}}$ is the scalar product associated with the norm defined in $(\ref{E3.31})$.\\
  We return to the equation $(\ref{E2.47})$ and we set $\delta=0$. Then, an orthonormal basis of the Hilbert space $L^{2}(\mathbb{R}_{+},e^{-x}dx)$ can be given by the Laguerre polynomials
  \begin{align}\label{E3.35}
  \varphi_{j}(x)=L_{j}(x),\hspace{0.25cm}j\in\mathbb{Z}.
  \end{align}
  An orthonormal basis of the Dirichlet space $\mathcal{D}$ is
  \begin{align}\label{E3.36}
   \psi_{j}(z)=\left\{
                 \begin{array}{ll}
                   \sqrt{\frac{1}{\pi}};&j=0\hbox{} \\\\
                   \sqrt{\frac{1}{\pi j}}z^{j};&j\geq 1. \hbox{}
                 \end{array}
               \right.
\end{align}
The proof is easy and can be omitted.\\
From $(ii)$ of theorem $(\ref{T2.2})$, the following series
\begin{align}\label{E3.37}
K(z,x)=\sum_{j=0}^{+\infty}\overline{\varphi_{j}}(x)\psi_{j}(z),
\end{align}
 converges $a.e$ $d\mu_{1}(x)$ for each $z\in \mathbb{D}.$ \\
 By replacing $\varphi_{j}(x)$ and $\psi_{j}(z)$ by its above expressions given in $(\ref{E3.35})$ and $(\ref{E3.36})$, the kernel $K(z,x)$ can be written as
\begin{align}\label{E3.38}
   \nonumber K(z,x)&=\frac{1}{\sqrt{\pi}}[1+\sum_{j=1}^{+\infty}\frac{1}{\sqrt{j}}z^{j}L_{j}(x)]\\
   \nonumber&=\frac{1}{\sqrt{\pi}}[1+z\sum_{j=0}^{+\infty}\frac{1}{\sqrt{j+1}}z^{j}L_{j+1}(x)]\\
  \nonumber&=\frac{1}{\sqrt{\pi}}[1+z\sum_{j=0}^{+\infty}\frac{1}{\sqrt{j+1}}[\frac{1}{j+1}][\frac{(j+1)!}{j!}z^{j}L_{j+1}(x)]\\
  &=\frac{1}{\sqrt{\pi}}[1+z\sum_{j=0}^{+\infty}\frac{1}{(j+1)\frac{3}{2}}[\frac{(j+1)!}{j!}z^{j}L_{j+1}(x)].
\end{align}
 Using the formula given in \cite[p.42]{Sch}
 \begin{align}\label{E3.39}
 \frac{1}{\lambda^{s}}=\frac{1}{\Gamma(s)}\mathscr{L}(t^{s-1})(\lambda):=\frac{1}{\Gamma(s)}\int_{0}^{+\infty}e^{-\lambda t}t^{s-1}dt,\hspace{0.25cm} Re(\lambda)>0,
 \end{align}
  for $\lambda=j+1$ and $s=\frac{3}{2}$, the formula $(\ref{E3.38})$ can be rewritten as
\begin{align}\label{E3.40}
  \nonumber K(z,x)&=\frac{1}{\sqrt{\pi}}[1+\frac{z}{\Gamma(\frac{3}{2})}\sum_{j=0}^{+\infty}
   \int_{0}^{+\infty}\sqrt{t}e^{-(j+1)t}dt
 [\frac{(j+1)!}{j!}z^{j}L_{j+1}(x)]\\
 &=\frac{1}{\sqrt{\pi}}[1+\frac{z}{\Gamma(\frac{3}{2})}\sum_{j=0}^{+\infty}\int_{0}^{+\infty}\sqrt{t}e^{-t}(ze^{-t})^{j}
 dt[\frac{(j+1)!}{j!}z^{j}L_{j+1}(x)].
   \end{align}
To compute the above sum, we consider the following series
\begin{align}\label{E3.41}
S_{a}(z,x)=\sum_{j=0}^{+\infty}\int_{a}^{+\infty}\sqrt{t}e^{-t}(ze^{-t})^{j}
 [\frac{(j+1)!}{j!}z^{j}L_{j+1}]dt,\hspace{0.25cm}a\geq0.
\end{align}
Note that
\begin{align}\label{E3.42}
K(z,x)=\frac{1}{\sqrt{\pi}}[1+\frac{z}{\Gamma(\frac{3}{2})}S_{0}(z,x)].
\end{align}
First, we have to prove that the series $S_{a}(z,x)$ converge, for all $a>0$. To do so, we recall the following asymptotic formula for the Laguerre polynomials \cite[p.248]{Mag}
 \begin{align}\label{E3.43}
 L_{j}^{(\beta)}=x^{\frac{-\beta}{2}-\frac{1}{4}}O(j^{\frac{\beta}{2}-\frac{1}{4}}),
 \end{align}
  as $j\longrightarrow+\infty$, for $\frac{c}{j}\leq x\leq w$ where $c$ and $w$ are fixed positive constants. Then, for $\beta=0$ and $j$ enough large $j\geq j_{0}$, we obtain the following estimate
  \begin{align}\label{E3.44}
 \mid L_{j}(x)\mid\leq M \mid j x\mid^{\frac{-1}{4}},\hspace{0.25cm}j\geq j_{0},
 \end{align}
 where $M$ is a positive constant. Then, by $(\ref{E3.44})$ and for $k\in\mathbb{Z}_{+}$ enough large with $t>a$, we get the following inequality
\begin{align}\label{E3.45}
    \nonumber\mid\sum_{j=0}^{k}(ze^{-t})^{j}(j+1)L_{j+1}(x)\mid&\leq \sum_{j=0}^{j_{0}-1}(j+1)L_{j+1}(x)+\sum_{j=j_{0}}^{k}e^{-aj}M
   \mid x\mid^{\frac{-1}{4}}(j+1)^{\frac{3}{4}}\\
    \nonumber&\leq\sum_{j=0}^{j_{0}-1}(j+1)L_{j+1}(x)+M
   \mid x\mid^{\frac{-1}{4}}\sum_{j=j_{0}}^{+\infty}e^{-aj}(j+1)^{\frac{3}{4}}\\
   &=C_{x,a}<+\infty.
   \end{align}
Applying the Lebesgue dominated convergence theorem, we can interchange the sum and the integral in $(\ref{E3.41})$ in which we consider $a>0$. Then, we obtain
\begin{align}\label{E3.46}
S_{a}(z,x)=\int_{a}^{+\infty}\sqrt{t}e^{-t}\sum_{j=0}^{+\infty}(ze^{-t})^{j}
 [\frac{(j+1)!}{j!}z^{j}L_{j+1}]dt,\hspace{0.25cm}a>0.
  \end{align}
 Now, using the following generating function \cite[p.104]{Mou}
 \begin{align}\label{E3.47}
 \sum_{j=0}^{+\infty}\frac{(j+k)!}{k!j!}L^{(\beta)}_{j+k}(y)s^{j}=
(1-s)^{-\beta-k-1}\exp(\frac{-ys}{1-s})L^{(\beta)}_{k}(\frac{y}{1-s}),
\end{align}
for  $k=1$, $\beta=0$, $y=x$ and $s=ze^{-t}$, the equation $(\ref{E3.46})$ becomes
 \begin{align}\label{E3.48}
 S_{a}(z,x)=\int_{a}^{+\infty}\sqrt{t}e^{-t}(1-ze^{-t})^{-2}\exp(\frac{-xze^{-t}}{1-ze^{-t}})L_{1}(\frac{x}{1-ze^{-t}})dt.
 \end{align}
 We rewrite the above function $S_{a}(z,x)$ as
 \begin{align}\label{E3.49}
  S_{a}(z,x)=\int_{0}^{+\infty}1_{[a,+\infty[}\sqrt{t}e^{-t}(1-ze^{-t})^{-2}\exp(\frac{-xze^{-t}}{1-ze^{-t}})L_{1}(\frac{x}{1-ze^{-t}})dt,
  \end{align}
 where $1_{[a,+\infty[}$ is the characteristic function associated with the set $[a,+\infty[\subset\mathbb{R}_{+}$. Then,
 by the continuity of the functions $s\longmapsto e^{s}$ and $s\longmapsto L_{1}(s)$, the involved function in the right hand side of $(\ref{E3.49})$ satisfies the following estimate
 \begin{align}\label{E3.50}
  \mid 1_{[a,+\infty[}\sqrt{t}e^{-t}(1-ze^{-t})^{-2}\exp(\frac{-xze^{-t}}{1-ze^{-t}})L_{1}(\frac{x}{1-ze^{-t}})\mid\leq \tilde{C}_{z,x}\sqrt{t}e^{-t}, \hspace{0.25cm}t\geq0,
 \end{align}
  where $\tilde{C}_{z,x}$ is a positive constant.\\
By making appeal of Lebesgue dominated convergence theorem, we get
\begin{align}\label{E3.51}
  \lim_{a\longrightarrow 0}S_{a}(z,x)=\int_{0}^{+\infty}\sqrt{t}e^{-t}(1-ze^{-t})^{-2}\exp(\frac{-xze^{-t}}{1-ze^{-t}})L_{1}(\frac{x}{1-ze^{-t}})dt.
  \end{align}
Hence, we obtain
 \begin{align}\label{E3.52}
 S_{0}(z,x)=\int_{0}^{+\infty}\sqrt{t}e^{-t}(1-ze^{-t})^{-2}\exp(\frac{-xze^{-t}}{1-ze^{-t}})L_{1}(\frac{x}{1-ze^{-t}})dt.
  \end{align}
  Finally, returning back to the equation $(\ref{E3.42})$, we get the desired kernel
\begin{align}\label{E3.53}
 K(z,x)=\frac{1}{\sqrt{\pi}}+\frac{z}{\sqrt{\pi}\Gamma(\frac{3}{2})}\int_{0}^{+\infty}\sqrt{t}e^{-t}
 (1-ze^{-t})^{-2}\exp(\frac{-xze^{-t}}{1-ze^{-t}})L_{1}(\frac{x}{1-ze^{-t}})dt.
   \end{align}
\end{proof}
\begin{remark}
By considering the following natural isometry
\begin{align}\label{E3.54}
		\nonumber T:\hspace{0.25cm}L^{2}(\mathbb{R}_{+},dx)&\longrightarrow L^{2}(\mathbb{R}_{+},e^{-x}dx)\\
		&f\longmapsto T(f)(x)=e^{\frac{x}{2}}f(x),
\end{align}
we obtain in a canonical way from the above proposition the following isometry
\begin{align}\label{E3.55}
 \nonumber L^{2}(\mathbb{R}_{+},& dx)\longrightarrow \mathcal{D}\\
 &f\longmapsto B[f](z):=\int_{0}^{+\infty}K(z,x)f(x)dx,
 \end{align}
  where,
  \begin{align}\label{E3.56}
 K(z,x)=\frac{e^{\frac{x}{2}}}{\sqrt{\pi}}[1+\frac{z}{\Gamma(\frac{3}{2})}\int_{0}^{+\infty}\sqrt{t}e^{-t}
 (1-ze^{-t})^{-2}\exp(\frac{-xze^{-t}}{1-ze^{-t}})L_{1}(\frac{x}{1-ze^{-t}})dt].
   \end{align}
\end{remark}
\section{Generalized Bergman-Dirichlet space and associated Bargmann transform}
\subsection{Generalized Bergman-Dirichlet space as harmonic space}
In this section, we intend to associate a new Bargmann transforms to a class of generalized Bergman-Dirichlet space $\mathcal{D}_{m}^{\alpha}$, $\alpha>-1$, $m\in\mathbb{Z}_{+}$ called weighted Bergman-Dirichlet space of order $m$. The functional space $\mathcal{D}_{m}^{\alpha}$ have been considered by Ahmed Intissar \emph{et} al. in \cite{Elh}. In order to avoid any confusion, it should be noted that the space $\mathcal{D}_{m}^{\alpha}$ was denoted in the last reference by $\mathcal{A}^{2,\alpha}_{m}(\mathbb{D}_{R})$, with $R=1$.\\
To introduce the  weighted Bergman-Dirichlet space  $\mathcal{D}_{m}^{\alpha}$ of order $m$, we need to fixe some notations.\\
Let $\alpha>-1$ and let $L^{2}(\mathbb{D},d\mu_{\alpha})=L^{2,\alpha}(\mathbb{D})$ the space of complex valued functions on $\mathbb{D}$ that are square integrable with respect to the density measure\\
 $d\mu_{\alpha}(z)=(1-\mid z\mid^{2})^{\alpha}d\lambda(z).$ The space $L^{2,\alpha}(\mathbb{D})$ is a Hilbert space with the norm
\begin{align}\label{E4.1}
\parallel f\parallel_{\alpha}^{2}=\int_{\mathbb{D}}\mid f(z)\mid^{2}d\mu_{\alpha}(z),
\end{align}
corresponding to the hermitian scalar product
\begin{align}\label{E4.2}
<f,g>_{\alpha}=\int_{\mathbb{D}}f(z)\overline{g(z)}d\mu_{\alpha}(z).
\end{align}
$Hol(\mathbb{\mathbb{D}})$ denotes the vector space of all convergent entire series $\displaystyle{f(z)=\sum_{j=0}^{+\infty}}a_{j}z^{j}$ on $\mathbb{D}$. For any arbitrary non negative $m\in\mathbb{Z}_{+}$, a function $f\in Hol(\mathbb{\mathbb{D}})$ splits as
\begin{align}\label{E4.3}
f(z)=f_{1,m}(z)+f_{2,m}(z),
\end{align}
where
$\displaystyle{f_{1,m}(z)=\sum_{j=0}^{m-1}a_{j}z^{j}}$ and $\displaystyle{f_{2,m}(z)=\sum_{j=m}^{+\infty}a_{j}z^{j}},$
with the convention that $f_{1,m}(z)=0$  when $m=0.$\\
Note that we have
\begin{align}\label{E4.4}
f^{(m)}(z)=f_{2,m}^{(m)}(z).
\end{align}
Therefore, the space $\mathcal{D}_{m}^{\alpha}$ is defined as the space of holomorphic functions
\begin{align}\label{E4.5}
\mathcal{D}_{m}^{\alpha}=\{f\hspace{0.2cm}\mathbb{D}\longrightarrow \mathbb{C}\hspace{0.2cm} holomorphic ,\hspace{0.2cm}\int_{\mathbb{D}}\mid f^{(m)}(z)\mid^{2}d\mu_{\alpha}(z)<+\infty\},
\end{align}
endowed with the following scalar product
\begin{align}\label{E4.6}
<f,g>_{\alpha,m}=<f_{1,m},g_{1,m}>_{\alpha}+<f_{2,m},g_{2,m}>_{\alpha},
\end{align}
for given $f,g\in \mathcal{D}_{m}^{\alpha}$.\\
According to \cite{Elh}, the space  $\mathcal{D}_{m}^{\alpha}$ with $\alpha>-1$ is non trivial and an element\\ $\displaystyle{f(z)=\sum_{j=0}^{+\infty}a_{j}z^{j}}$ belongs to $\mathcal{D}_{m}^{\alpha}$ if and only if
 \begin{align}\label{E4.7}
 \displaystyle{\sum_{j=m}^{+\infty}}\frac{(j!)^{2}}{(j-m)!\Gamma(j-m+\alpha+2)}\mid a_{j}\mid^{2}<+\infty.
 \end{align}
 Moreover, the space $\mathcal{D}_{m}^{\alpha}$ is a R.K.H.S and its reproducing kernel is given by \cite{Elh}
 \begin{align}\label{E4.8}
 K_{m}^{\alpha}(z,w)=\frac{\alpha+1}{\pi}\{\sum_{j=0}^{m-1}\frac{(\alpha+2)_{j}(z\overline{w})^{j}}{j!}+
 \frac{(z\overline{w})^{m}}{(m!)^{2}}\prescript{}{3}{F}_2^{}(^{1,1,\alpha+2}_{j+1,j+1}\mid z\overline{w})\}.
 \end{align}
For $\alpha=0$ and $m=1$, the corresponding reproducing kernel reduces further to be reproducing kernel of the classical Dirichlet space $\mathcal{D}$
\begin{align}\label{E4.9}
 K_{1}^{0}(z,w)=\frac{1}{\pi}(1+z\overline{w}\prescript{}{2}{F}_1^{}(^{1,1}_{2}\mid z\overline{w})\}=\frac{1}{\pi}(1+\log(\frac{1}{1-z})).
 \end{align}
 Before building an associated Bargmann transform with the generalized Dirichlet space $\mathcal{D}_{m}^{\alpha}$, we will give, in the same way as in  the case of the  classical Dirichlet space, a characterization of the functional space $\mathcal{D}_{m}^{\alpha}$ as harmonic space of a single elliptic partial differential operator. Precisely, we have the following proposition
 \begin{proposition}\label{P4.1}
Let $\alpha>-1$, $m\in\mathbb{Z}_{+}$ and $\tilde{\Delta}_{\alpha}$ be the partial differential operator defined by
\begin{align}\label{E4.10}
 \tilde{\Delta}_{\alpha}:=-4(1-\mid z\mid^{2})[(1-\mid z\mid^{2})\frac{\partial^{2}}{\partial z\partial\overline{z}}-(\alpha+2)\overline{z}\frac{\partial}{\partial\overline{z}}].
 \end{align}
 It acts on the Hilbert space $L^{2,\alpha}(\mathbb{D})=L^{2}(\mathbb{D},(1-\mid z\mid^{2})^{\alpha}d\lambda(z))$,
 with the domain
 \begin{align}\label{E4.11}
 D(\tilde{\Delta}_{\alpha}):=\{F\in L^{2,\alpha}(\mathbb{D}),\hspace{0.25cm}\tilde{\Delta}_{\alpha}F\in L^{2,\alpha}(\mathbb{D})\hspace{0.25cm}and\hspace{0.25cm}\frac{\partial^{m} F}{\partial z^{m}}\in L^{2,\alpha}(\mathbb{D})\}.
 \end{align}
 Then, we have
 \begin{align}\label{E4.12}
 \mathcal{D}_{m}^{\alpha}=\{F\in D(\tilde{\Delta}_{\alpha}),\hspace{0.25cm}\tilde{\Delta}_{\alpha}F=0\}.
 \end{align}
 \end{proposition}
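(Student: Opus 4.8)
The plan is to follow the strategy of Proposition \ref{P3.1}, using that $\tilde{\Delta}_{\alpha}$ is, for the right choice of parameter, exactly the operator $H_{\nu}$ of (\ref{E2.57}). Indeed, setting $\nu=\frac{\alpha+2}{2}$ one has $2\nu-2=\alpha$, so $L^{2,\nu}(\mathbb{D})=L^{2,\alpha}(\mathbb{D})$, and the differential expressions of $\tilde{\Delta}_{\alpha}$ and $H_{\nu}$ coincide. Since $\alpha>-1$ gives $\nu>\frac12$, the point spectrum (\ref{E2.59}) contains the value $0$ (corresponding to $\ell=0$), so the null eigenspace $A_{0}^{2,\nu}(\mathbb{D})$ of (\ref{E2.56}) is well defined; the whole proof reduces to identifying this null eigenspace with the holomorphic functions and to a coefficient comparison.

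First I would prove the inclusion $\mathcal{D}_{m}^{\alpha}\subseteq\{F\in D(\tilde{\Delta}_{\alpha}),\ \tilde{\Delta}_{\alpha}F=0\}$. Let $f(z)=\sum_{j}a_{j}z^{j}\in\mathcal{D}_{m}^{\alpha}$. As $f$ is holomorphic, $\frac{\partial f}{\partial\overline{z}}=0$, and since every term of $\tilde{\Delta}_{\alpha}$ carries the factor $\frac{\partial}{\partial\overline{z}}$, we get $\tilde{\Delta}_{\alpha}f=0$ at once. The requirement $\frac{\partial^{m}f}{\partial z^{m}}=f^{(m)}\in L^{2,\alpha}(\mathbb{D})$ is precisely the defining condition in (\ref{E4.5}). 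It remains to check $f\in L^{2,\alpha}(\mathbb{D})$. A computation in polar coordinates gives $\int_{\mathbb{D}}|z^{j}|^{2}d\mu_{\alpha}(z)=\pi\frac{j!\,\Gamma(\alpha+1)}{\Gamma(j+\alpha+2)}$, and comparing this with the membership condition (\ref{E4.7}), the ratio $\frac{j!/\Gamma(j+\alpha+2)}{(j!)^{2}/\big((j-m)!\,\Gamma(j-m+\alpha+2)\big)}$ is $O(j^{-2m})$ as $j\to+\infty$. Hence finiteness of (\ref{E4.7}) forces $\sum_{j}|a_{j}|^{2}\frac{j!}{\Gamma(j+\alpha+2)}<+\infty$, i.e. $f\in L^{2,\alpha}(\mathbb{D})$, and therefore $f\in D(\tilde{\Delta}_{\alpha})$ with $\tilde{\Delta}_{\alpha}f=0$.

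For the reverse inclusion I would argue exactly as in Proposition \ref{P3.1} via the reproducing kernel. Putting $\ell=0$ in (\ref{E2.60}) collapses the Jacobi factor to $1$ and, with $\nu=\frac{\alpha+2}{2}$, yields $K_{0}^{\nu}(z,w)=\frac{\alpha+1}{\pi}(1-z\overline{w})^{-(\alpha+2)}$. This is exactly the reproducing kernel of the weighted Bergman space $A^{2,\alpha}(\mathbb{D})$ of holomorphic functions in $L^{2,\alpha}(\mathbb{D})$ (a short monomial computation confirms the constant). Since $A_{0}^{2,\nu}(\mathbb{D})=\{F\in L^{2,\alpha}(\mathbb{D}),\ H_{\nu}F=0\}$ and $A^{2,\alpha}(\mathbb{D})$ share this kernel, the proposition $3.3$ in \cite[p.9]{Pau} gives their equality; in particular every $L^{2,\alpha}$-solution of $H_{\nu}F=0$ is holomorphic. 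Now let $F\in D(\tilde{\Delta}_{\alpha})$ with $\tilde{\Delta}_{\alpha}F=0$. Because $\tilde{\Delta}_{\alpha}$ and $H_{\nu}$ are the same differential expression, this reads $H_{\nu}F=0$, so $F\in A_{0}^{2,\nu}(\mathbb{D})=A^{2,\alpha}(\mathbb{D})$ is holomorphic, while membership in $D(\tilde{\Delta}_{\alpha})$ furnishes $f^{(m)}=\frac{\partial^{m}F}{\partial z^{m}}\in L^{2,\alpha}(\mathbb{D})$; thus $F\in\mathcal{D}_{m}^{\alpha}$. Combining the two inclusions yields (\ref{E4.12}).

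I expect the main obstacle to be the reverse inclusion, and precisely the passage from a weak $L^{2,\alpha}$-solution of $\tilde{\Delta}_{\alpha}F=0$ to a genuinely holomorphic function: a priori $F$ is only an element of $L^{2,\alpha}(\mathbb{D})$, so one cannot simply deduce holomorphy from $\frac{\partial F}{\partial\overline{z}}=0$. This is resolved by the spectral and reproducing-kernel identification $A_{0}^{2,\nu}(\mathbb{D})=A^{2,\alpha}(\mathbb{D})$ together with the uniqueness statement of \cite{Pau}. One must, however, be careful that the equation ``$\tilde{\Delta}_{\alpha}F=0$'' on the restricted domain $D(\tilde{\Delta}_{\alpha})$ matches ``$H_{\nu}F=0$'' on the maximal domain of $H_{\nu}$; the extra constraint $\frac{\partial^{m}F}{\partial z^{m}}\in L^{2,\alpha}(\mathbb{D})$ only shrinks the domain and is exactly what encodes membership in $\mathcal{D}_{m}^{\alpha}$.
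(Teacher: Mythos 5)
Your proof is correct and follows essentially the same route as the paper: both identify $\tilde{\Delta}_{\alpha}$ with $H_{\nu}$ for $\nu=\frac{\alpha}{2}+1$, prove $\mathcal{D}_{m}^{\alpha}\subset L^{2,\alpha}(\mathbb{D})$ by the same Gamma-ratio asymptotics comparing (\ref{E4.7}) with the monomial norms, and obtain holomorphy of null-space elements via the coincidence of the $\ell=0$ eigenspace kernel (\ref{E2.60}) with the weighted Bergman kernel together with Proposition $3.3$ of \cite{Pau}. The only cosmetic difference is that the paper first establishes $\mathcal{D}_{m}^{\alpha}\subset\mathcal{A}^{2,\alpha}(\mathbb{D})$ and then reads off both inclusions from the kernel identity, whereas you organize the same ingredients directly into the two inclusions.
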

\begin{proof}Let $\displaystyle{f(z)=\sum_{j=0}^{+\infty}a_{j}z^{j}}$ be a holomorphic function on $\mathbb{D}$.\\
 By using the same method as for the computation of the integral appearing in $(\ref{E3.17})$, we can give by a direct computation the following  formula for the square norm of $f$ in $L^{2,\alpha}(\mathbb{D})$
\begin{align}\label{E4.13}
\int_{\mathbb{D}}\mid f(z)\mid^{2}(1-\mid z\mid^{2})^{\alpha}d\lambda(z)
=\pi\sum_{j=0}^{+\infty}\frac{\Gamma(j+1)\Gamma(\alpha+1)}{\Gamma(j+\alpha+2)}\mid a_{j}\mid^{2}.
\end{align}
Then, the following weighted Bergman space on $\mathbb{D}$
\begin{align}\label{E4.14}
\mathcal{A}^{2,\alpha}(\mathbb{D})=\{f\hspace{0.2cm}\mathbb{D}\longrightarrow \mathbb{C}\hspace{0.2cm} holomorphic ,\hspace{0.2cm}\int_{\mathbb{D}}\mid f(z)\mid^{2}(1-\mid z\mid^{2})^{\alpha}d\lambda(z)<+\infty\}
\end{align}
can be rewritten as
\begin{align}\label{E4.15}
\mathcal{A}^{2,\alpha}(\mathbb{D})=\{\displaystyle{f(z)=\sum_{j=0}^{+\infty}a_{j}z^{j}}\hspace{0.2cm} holomorphic \hspace{0.2cm}on\hspace{0.2cm} \mathbb{D}\hspace{0.2cm}and\hspace{0.2cm}\pi\sum_{j=0}^{+\infty}\frac{\Gamma(j+1)\Gamma(\alpha+1)}{\Gamma(j+\alpha+2)}\mid a_{j}\mid^{2}<+\infty\}.
\end{align}
It is well known that the weighted Bergman space $\mathcal{A}^{2,\alpha}(\mathbb{D})$ defined in $(\ref{E4.14})$ is a R.K.H.S, with
the reproducing kernel \cite{Hed,Zhu2}
\begin{align}\label{E4.16}
K(z,w)=\frac{\alpha+1}{\pi}(1-z\overline{w})^{-\alpha-2}.
\end{align}
Now, by using the asymptotic formula \cite[p.22]{Bea}
\begin{align}\label{E4.17}
\frac{\Gamma(z+a)}{\Gamma(z)}\sim z^{a}\hspace{0.2cm} when \hspace{0.2cm}\mid z\mid\longrightarrow+\infty,\hspace{0.25cm}a\in\mathbb{C},
\end{align}
for $z=j+1$ and $a=\alpha+1$, we obtain
\begin{align}\label{E4.18}
\pi\frac{\Gamma(j+1)\Gamma(\alpha+1)}{\Gamma(j+\alpha+2)}\sim \pi \Gamma(\alpha+1) (1+j)^{-\alpha-1}
\hspace{0.2cm} as \hspace{0.2cm}j\longrightarrow+\infty .
\end{align}

Thus, a function $\displaystyle{f(z)=\sum_{j=0}^{+\infty}a_{j}z^{j}}$ belongs to $\mathcal{A}^{2,\alpha}(\mathbb{D})$ if and only if
\begin{align}\label{E4.19}
\sum_{j=0}^{+\infty}(1+j)^{-\alpha-1}\mid a_{j}\mid^{2}<+\infty.
\end{align}
Also, by using the asymptotic formula $(\ref{E4.17})$, the membership test, given in  $(\ref{E4.7})$ for the functional space $\mathcal{D}_{m}^{\alpha}$ can be rewritten as
 \begin{align}\label{E4.20}
 \displaystyle{f(z)=\sum_{j=0}^{+\infty}a_{j}z^{j}}\in \mathcal{D}_{m}^{\alpha}\Longleftrightarrow \sum_{j=m}^{+\infty}(1+j)^{2m-\alpha-1}\mid a_{j}\mid^{2}<+\infty.
 \end{align}
Considering  $(\ref{E4.19})$ and $(\ref{E4.20})$ with the following inequality
\begin{align}\label{E4.21}
\sum_{j=m}^{+\infty}(1+j)^{-\alpha-1}\mid a_{j}\mid^{2}\leq\sum_{j=m}^{+\infty}(1+j)^{2m-\alpha-1}\mid a_{j}\mid^{2},
\end{align}
we obtain the following inclusion
\begin{align}\label{E4.22}
\mathcal{D}_{m}^{\alpha}\subset\mathcal{A}^{2,\alpha}(\mathbb{D}).
\end{align}
Now, we return back to the partial differential operator defined by $(\ref{E2.57})$ and  $(\ref{E2.58})$ in which we choose the parameter $\nu=\frac{\alpha}{2}+1$. Then, we get the following linear partial differential operator
\begin{align}\label{E4.23}
 \Delta_{\alpha}:=H_{\frac{\alpha}{2}+1}=-4(1-\mid z\mid^{2})[(1-\mid z\mid^{2})\frac{\partial^{2}}{\partial z\partial\overline{z}}-(\alpha+2)\overline{z}\frac{\partial}{\partial\overline{z}}]
\end{align}
acting on the Hilbert space $L^{2,\alpha}(\mathbb{D})=L^{2}(\mathbb{D},(1-\mid z\mid^{2})^{\alpha}d\lambda(z))$,
 with the maximal domain
 \begin{align}\label{E4.24}
 D(\Delta_{\alpha}):=\{F\in L^{2,\alpha}(\mathbb{D}),\hspace{0.25cm}\Delta_{\alpha}F\in L^{2,\alpha}(\mathbb{D})\}.
 \end{align}
From $(\ref{E2.59})$, it follows that the point spectrum of $\Delta_{\alpha}$ is given by
\begin{align}\label{E4.25}
\sigma(\Delta_{\alpha})=\{4l(\alpha-l+1),\hspace{0.25cm}l=0,1,2,...,[\frac{\alpha-1}{2}]\}.
\end{align}
The corresponding eigenespaces
\begin{align}\label{E4.26}
\mathcal{E}_{\ell}^{\alpha}(\mathbb{D}):=\mathcal{A}_{\ell}^{2,\frac{\alpha}{2}+1}(\mathbb{D})
\end{align}
associated with the eigenvalue $E_{\alpha}(\ell)=4\ell(\alpha-\ell+1)$ is a R.K.H.S with the following reproducing kernel
\begin{align}\label{E4.27}
 \nonumber\tilde{K}_{\ell}^{\alpha}(z,w)&:=K_{\ell}^{\frac{\alpha}{2}+1}(z,w)\\
 \nonumber&=(\frac{\alpha+1-2\ell}{\pi})(1-z\overline{w})^{-\alpha-2}(\frac{\mid1-z\overline{w}\mid^{2}}{(1-\mid z\mid^{2})(1-\mid w\mid^{2})})^{\ell}\\
&\times P_{\ell}^{(0,\alpha+1-2\ell)}(2\frac{(1-\mid z\mid^{2})(1-\mid w\mid^{2})}{\mid1-z\overline{w}\mid^{2}}-1),
\end{align}
where we have used $(\ref{E2.60})$.\\
It follows that, for $\ell=0$, the reproducing kernel of the eigenspace $\mathcal{E}_{0}^{\alpha}(\mathbb{D})$  is reduced to
\begin{align}\label{E4.28}
\tilde{K}_{0}^{\alpha}(z,w)=\frac{\alpha+1}{\pi}(1-z\overline{w})^{-\alpha-2}
\end{align}
being the reproducing kernel of the Bergman space $\mathcal{A}^{2,\alpha}(\mathbb{D})$. From $(\ref{E4.16})$ and $(\ref{E4.28})$, we see that the both spaces $\mathcal{A}^{2,\alpha}(\mathbb{D})$ and  $\mathcal{E}_{0}^{\alpha}(\mathbb{D})$
have the same reproducing kernel. Thus, by the proposition $(3.3)$ in \cite[p.9]{Pau}, we obtain the following equality
\begin{align}\label{E4.29}
\mathcal{E}_{0}^{\alpha}(\mathbb{D}):=\{F\in L^{2,\alpha}(\mathbb{D}),\hspace{0.25cm}\Delta_{\alpha}F=0\}=
\mathcal{A}^{2,\alpha}(\mathbb{D}).
\end{align}
Then, the inclusion  $(\ref{E4.22})$ becomes
\begin{align}\label{E4.30}
\mathcal{D}_{m}^{\alpha}\subset\{F\in L^{2,\alpha}(\mathbb{D}),\hspace{0.25cm}\Delta_{\alpha}F=0\}.
\end{align}
By using the holomorphy of an element $F$ in $\mathcal{D}_{m}^{\alpha}$, we have $\frac{\partial F}{\partial \overline{z}}=0$, then we obtain\\
$\Delta_{\alpha}F=0$. Consequently, we get the following inclusion
 \begin{align}\label{E4.31}
\mathcal{D}_{m}^{\alpha}&\subset\{F\in L^{2,\alpha}(\mathbb{D}),\hspace{0.25cm}\frac{\partial^{m} F}{\partial z^{m}}\in L^{2,\alpha}(\mathbb{D}),\hspace{0.25cm}\Delta_{\alpha}F=0\}
=\{F\in D(\tilde{\Delta}_{\alpha}),\hspace{0.25cm}\tilde{\Delta}_{\alpha}F=0\}.
\end{align}
Conversely, let $F\in D(\tilde{\Delta}_{\alpha})$ such that $\tilde{\Delta}_{\alpha}F=0=\Delta_{\alpha}F.$ Then, with the help of $(\ref{E4.29})$, we have
\begin{align}\label{E4.32}
F\in\{F\in L^{2,\alpha}(\mathbb{D}),\hspace{0.25cm}\Delta_{\alpha}F=0\}=\mathcal{A}^{2,\alpha}(\mathbb{D}).
\end{align}
Thus, the holomorphy of $F$ follows.\\
Then, $F$ is holomorphic on $\mathbb{D}$, $F\in L^{2,\alpha}(\mathbb{D})$ and $\frac{\partial^{m} F}{\partial z^{m}}\in L^{2,\alpha}(\mathbb{D}).$ This means that $F$ is an element of  the generalized Dirichlet space $\mathcal{D}_{m}^{\alpha}.$ Hence, we get
\begin{align}\label{E4.33}
\{F\in D(\tilde{\Delta}_{\alpha}),\hspace{0.25cm}\tilde{\Delta}_{\alpha}F=0\} \subset\mathcal{D}_{m}^{\alpha}.
\end{align}
The inclusions $(\ref{E4.31})$ and $(\ref{E4.33})$  end the proof.
\end{proof}
In the same way as in the  proposition $(\ref{P3.2})$, we can state the following result.
\begin{proposition}\label{P4.2}
\begin{description}
  \item[(i)] The operator $\tilde{\Delta}_{\alpha}$ is closable and admits a self-adjoint extension.
  \item[(ii)] The operator $\tilde{\Delta}_{\alpha}$ is an unbounded non self-adjoint operator.
  \item[(iii)] $0$ belongs to the point spectrum of  $\tilde{\Delta}_{\alpha}.$
\end{description}
\end{proposition}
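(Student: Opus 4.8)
The plan is to mirror the proof of Proposition \ref{P3.2} almost verbatim, now using the self-adjoint operator $\Delta_{\alpha}=H_{\frac{\alpha}{2}+1}$ introduced in $(\ref{E4.23})$ in place of $H_{1}$, and invoking Proposition \ref{P4.1} in place of Proposition \ref{P3.1}. First I would note that, since $\alpha>-1$ gives $\nu=\frac{\alpha}{2}+1>\frac{1}{2}$, the operator $\Delta_{\alpha}=H_{\frac{\alpha}{2}+1}$ is self-adjoint by the spectral statement recorded just after $(\ref{E2.58})$. Because $\tilde{\Delta}_{\alpha}$ and $\Delta_{\alpha}$ act by the same differential expression, and the domain $(\ref{E4.11})$ of $\tilde{\Delta}_{\alpha}$ is obtained from the maximal domain $(\ref{E4.24})$ of $\Delta_{\alpha}$ by imposing the extra requirement $\partial^{m}F/\partial z^{m}\in L^{2,\alpha}(\mathbb{D})$, we have the operator inclusion $\tilde{\Delta}_{\alpha}\subset\Delta_{\alpha}$. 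Thus $\tilde{\Delta}_{\alpha}$ possesses the self-adjoint (hence closed) extension $\Delta_{\alpha}$, so $\tilde{\Delta}_{\alpha}^{*}\supseteq\Delta_{\alpha}$ is densely defined and $\tilde{\Delta}_{\alpha}$ is closable. This settles (i).

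For (ii), I would argue by contradiction exactly as in Proposition \ref{P3.2}: were $\tilde{\Delta}_{\alpha}$ self-adjoint, then combining $\tilde{\Delta}_{\alpha}\subset\Delta_{\alpha}$ with Proposition $1.6$ of \cite[p.9]{Kon} would give $\Delta_{\alpha}=\Delta_{\alpha}^{*}\subset\tilde{\Delta}_{\alpha}^{*}=\tilde{\Delta}_{\alpha}$, forcing $\Delta_{\alpha}=\tilde{\Delta}_{\alpha}$ and in particular $D(\tilde{\Delta}_{\alpha})=D(\Delta_{\alpha})$. The crux is therefore to establish that this domain equality fails, that is $D(\tilde{\Delta}_{\alpha})\subsetneq D(\Delta_{\alpha})$, and this is where I expect the only genuine work (and where one needs $m\geq1$). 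By the coefficient criteria $(\ref{E4.19})$ and $(\ref{E4.20})$ the exponent $2m-\alpha-1$ strictly exceeds $-\alpha-1$ as soon as $m\geq1$, so the weighted Bergman space $\mathcal{A}^{2,\alpha}(\mathbb{D})$ strictly contains $\mathcal{D}_{m}^{\alpha}$. I would then pick any holomorphic $f=\sum a_{j}z^{j}\in\mathcal{A}^{2,\alpha}(\mathbb{D})\setminus\mathcal{D}_{m}^{\alpha}$: since $f$ is holomorphic, $\partial f/\partial\overline{z}=0$ and hence $\Delta_{\alpha}f=0\in L^{2,\alpha}(\mathbb{D})$, so $f\in D(\Delta_{\alpha})$; but $f\notin\mathcal{D}_{m}^{\alpha}$ means precisely $\partial^{m}f/\partial z^{m}\notin L^{2,\alpha}(\mathbb{D})$, so $f\notin D(\tilde{\Delta}_{\alpha})$. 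This exhibits the strict inclusion and yields the contradiction, proving (ii).

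Finally, (iii) is just a reformulation of Proposition \ref{P4.1}: that proposition identifies $\mathcal{D}_{m}^{\alpha}=\{F\in D(\tilde{\Delta}_{\alpha}):\tilde{\Delta}_{\alpha}F=0\}$, and since $\mathcal{D}_{m}^{\alpha}$ is a nonzero R.K.H.S the kernel of $\tilde{\Delta}_{\alpha}$ is nontrivial, so $0$ is an eigenvalue, i.e. $0\in\sigma_{p}(\tilde{\Delta}_{\alpha})$. In summary, the entire argument is a transcription of the $\alpha=0$, $m=1$ case already treated for Proposition \ref{P3.2}, the self-adjointness of the extension $\Delta_{\alpha}$ being guaranteed by $\alpha>-1$; the single substantive point is the strict domain inclusion used in step (ii).
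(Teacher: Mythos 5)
Your proposal is correct and takes essentially the same approach as the paper, whose entire proof here is the remark that the argument of Proposition \ref{P3.2} carries over with $H_{1}$ replaced by the self-adjoint operator $\Delta_{\alpha}=H_{\frac{\alpha}{2}+1}$ (legitimate since $\alpha>-1$ gives $\nu=\frac{\alpha}{2}+1>\frac{1}{2}$), which is exactly what you do. You in fact supply one detail the paper leaves as ``not hard to see'': the strict domain inclusion $D(\tilde{\Delta}_{\alpha})\subsetneq D(\Delta_{\alpha})$, which you witness by a holomorphic $f\in\mathcal{A}^{2,\alpha}(\mathbb{D})\setminus\mathcal{D}_{m}^{\alpha}$ via the coefficient tests $(\ref{E4.19})$--$(\ref{E4.20})$, correctly noting that this step needs $m\geq1$.
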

The proof is the same as for the proposition $(\ref{P3.2})$ and can be omitted.
\subsection{Bargmann transform associated with generalized Bergman-Dirichlet space}
For the generalized Dirichlet space $\mathcal{D}_{m}^{\alpha}$, we shall associate a Bargmann transform. Precisely, we have the following proposition.
\begin{proposition}\label{P4.2} Let $\alpha>-1$ and $m\in\mathbf{Z}_{+}$, $m\geq2$. Then, we have the following unitary isomorphism
\begin{align}\label{E4.34}
 \nonumber L^{2}(\mathbb{R}_{+},& x^{\alpha}e^{-x}dx)\longrightarrow \mathcal{D}_{m}^{\alpha}\\
 &f\longmapsto B[f](z):=\int_{0}^{+\infty}K(z,x)f(x)dx,
 \end{align}
  where
 \begin{align}\label{E4.35}
  \nonumber  K(z,x)&=\frac{1}{\sqrt{\pi \Gamma(1+\alpha)}}\displaystyle{\sum_{0\leq j<m}}z^{j}L_{j}^{(\alpha)}(x)\\
   &\hspace{-1.5cm}+\frac{m!z^{m}(\Gamma(\frac{3}{2}))^{-m}\Gamma(\frac{1}{2}))^{1-m}}{\sqrt{\pi \Gamma(1+\alpha)}}
   \int_{0}^{+\infty}\omega_{(\alpha,m)}(t)(1-ze^{-t})^{-\alpha-m-1}
 \exp(\frac{-xze^{-t}}{1-ze^{-t}})L_{m}^{(\alpha)}(\frac{x}{1-ze^{-t}})dt,
   \end{align}
with $\varpi_{(\alpha,m)}$ is the function defined as follows
\begin{align}\label{E4.36}
\varpi_{(\alpha,m)}(t)=\sqrt{t}e^{-t}*[(\sqrt{t}e^{-2t})*(\frac{e^{-(\alpha+2)t}}{\sqrt{t}})]*...
*[(\sqrt{t}e^{-mt})*(\frac{e^{-(\alpha+m)t}}{\sqrt{t}})], \hspace{0.25cm}m\geq2.
\end{align}
The notation $f*g$ means the following convolution product \cite[p.91]{Sch}
\begin{align}\label{E4.37}
f*g(x)=\int_{0}^{x}f(x-y)g(y)dy.
\end{align}
\end{proposition}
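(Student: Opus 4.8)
The plan is to apply Theorem $(\ref{T2.2})$ with $M_{1}=\mathbb{R}_{+}$, $d\mu_{1}(x)=x^{\alpha}e^{-x}dx$ and $\mathcal{A}=\mathcal{D}_{m}^{\alpha}$, following verbatim the scheme of Proposition $(\ref{P3.3})$. First I would take as orthonormal basis $\mathcal{B}_{1}$ of $L^{2}(\mathbb{R}_{+},x^{\alpha}e^{-x}dx)$ the normalized Laguerre polynomials $\varphi_{j}^{(\alpha)}$ of $(\ref{E2.63})$, and as orthonormal basis $\mathcal{B}_{2}$ of $\mathcal{D}_{m}^{\alpha}$ the normalized monomials $\psi_{j}(z)=b_{j}z^{j}$, whose constants $b_{j}$ are read off from the membership criterion $(\ref{E4.7})$ (equivalently from the reproducing kernel $(\ref{E4.8})$). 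The essential feature is that the weight carried by $z^{j}$ changes across the cut $j=m$: for $0\le j<m$ it is governed by $\Gamma(\alpha+j+1)/j!$, whereas for $j\ge m$ it is governed by the Dirichlet weight $(j!)^{2}/[(j-m)!\,\Gamma(j-m+\alpha+2)]$. Hence, forming the kernel $K(z,x)=\sum_{j}\overline{\varphi_{j}^{(\alpha)}(x)}\,\psi_{j}(z)$ of point $(i)$ of Theorem $(\ref{T2.2})$ and splitting it as $\sum_{0\le j<m}+\sum_{j\ge m}$, the finite head collapses immediately to the polynomial term $\frac{1}{\sqrt{\pi\Gamma(1+\alpha)}}\sum_{0\le j<m}z^{j}L_{j}^{(\alpha)}(x)$ of $(\ref{E4.35})$, so all the work concerns the infinite tail.

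For the tail $\sum_{j\ge m}$ I would reindex $j\mapsto j+m$ and isolate the coefficient of $z^{j+m}L_{j+m}^{(\alpha)}(x)$, showing by an elementary manipulation of Pochhammer symbols that it equals $\frac{1}{\sqrt{\pi\Gamma(1+\alpha)}}\prod_{k=1}^{m}(j+k)^{-1/2}\prod_{k=2}^{m}(j+\alpha+k)^{-1/2}$. Each half-integer power $(j+c)^{-s}$ is then rewritten through the Laplace representation $(\ref{E3.39})$, namely $(j+c)^{-s}=\Gamma(s)^{-1}\int_{0}^{+\infty}e^{-(j+c)t}t^{s-1}dt$, with $s=\tfrac{3}{2}$ for the factors $(j+k)^{-3/2}$ produced by combining $(j+m)!/j!$ with $(j+k)^{-1/2}$, and $s=\tfrac{1}{2}$ for the factors $(j+\alpha+k)^{-1/2}$. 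By the convolution theorem $\mathcal{L}[f*g]=\mathcal{L}[f]\,\mathcal{L}[g]$ applied to the product $(\ref{E4.37})$, the product of these $2m-1$ Laplace transforms is precisely $\mathcal{L}[\varpi_{(\alpha,m)}](j)$ with $\varpi_{(\alpha,m)}$ the convolution of $(\ref{E4.36})$; this identifies the coefficient with $\frac{m!(\Gamma(\frac{3}{2}))^{-m}(\Gamma(\frac{1}{2}))^{1-m}}{\sqrt{\pi\Gamma(1+\alpha)}}\,\frac{(j+m)!}{m!\,j!}\,\mathcal{L}[\varpi_{(\alpha,m)}](j)$, the gamma constants carried by each transform cancelling against the prefactor of $(\ref{E4.35})$. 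Interchanging sum and integral then gives the tail as $\frac{m!z^{m}(\Gamma(\frac{3}{2}))^{-m}(\Gamma(\frac{1}{2}))^{1-m}}{\sqrt{\pi\Gamma(1+\alpha)}}\int_{0}^{+\infty}\varpi_{(\alpha,m)}(t)\sum_{j\ge0}\frac{(j+m)!}{m!\,j!}L_{j+m}^{(\alpha)}(x)(ze^{-t})^{j}\,dt$, and applying the Laguerre generating function $(\ref{E3.47})$ with $k=m$, $\beta=\alpha$, $y=x$, $s=ze^{-t}$ sums the inner series to $(1-ze^{-t})^{-\alpha-m-1}\exp\!\big(\tfrac{-xze^{-t}}{1-ze^{-t}}\big)L_{m}^{(\alpha)}\big(\tfrac{x}{1-ze^{-t}}\big)$, reproducing the integral term of $(\ref{E4.35})$.

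The main obstacle I anticipate is the rigorous justification of the interchange of summation and integration, which cannot be done naively on $[0,+\infty)$ since $\varpi_{(\alpha,m)}(t)$ decays only like $\sqrt{t}\,e^{-t}$ at infinity. I would treat it exactly as in Proposition $(\ref{P3.3})$: first establish absolute convergence on a truncated interval $[a,+\infty)$ with $a>0$, bounding the partial sums of $\sum_{j}\frac{(j+m)!}{m!\,j!}L_{j+m}^{(\alpha)}(x)(ze^{-t})^{j}$ uniformly by means of the Laguerre asymptotics $(\ref{E3.43})$--$(\ref{E3.44})$ together with the geometric decay $e^{-aj}$; invoke dominated convergence to swap sum and integral on $[a,+\infty)$; and finally let $a\to0$ by a second dominated convergence argument with integrable majorant $\tilde{C}_{z,x}\,\varpi_{(\alpha,m)}(t)$. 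A secondary technical point is to verify that the multiple convolution $(\ref{E4.36})$ is well defined and that its Laplace transform factorizes at the integer nodes $\lambda=j$; this follows since each factor is locally integrable on $\mathbb{R}_{+}$ with exponential decay, so every convolution and every Laplace transform involved converges absolutely.
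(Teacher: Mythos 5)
Your proposal is correct and takes essentially the same route as the paper's own proof: the same choice of bases, the same head/tail splitting with the Pochhammer manipulation, the same identification of the tail coefficients with the Laplace transform of the convolution $\varpi_{(\alpha,m)}$ via $\mathscr{L}(f*g)=\mathscr{L}(f)\mathscr{L}(g)$ (exactly the content of Lemma~(\ref{L4.1})), the same application of the Laguerre generating function, and the same two-step dominated-convergence argument (truncation to $[a,+\infty)$ using the asymptotics $(\ref{E3.43})$, then $a\to 0$). The only cosmetic difference is that you justify integrability of $\varpi_{(\alpha,m)}$ by a soft local-integrability/exponential-decay argument, whereas the paper proves the explicit pointwise bound $(\ref{E4.38})$ and uses it as the majorant; the substance is identical.
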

In order to prove the above proposition, we need the following key lemma.\\
With the help of the notations given in the proposition  $(\ref{P4.2})$, we have the following result.
\begin{lemma} \label{L4.1}Let $m\in\mathbb{Z}^{+}$, $m\geq2$ and $\alpha>-1$, then
\begin{enumerate}
  \item The following estimate holds
  \begin{align}\label{E4.38}
  \varpi_{(\alpha,m)}(t)\leq ({\cal{B}}(\frac{3}{2},\frac{1}{2}))^{m-1}t^{m}\sqrt{t}e^{-t},
  \end{align}
  where ${\cal{B}}(x,y)$ denotes the beta special function.
  \item The Laplace transform of $\varpi_{(\alpha,m)}(t)$ is well defined. Moreover, we have
\begin{align}\label{E4.39}
\mathscr{L}(\varpi_{(\alpha,m)}(t))(j)=\frac{(\Gamma(\frac{3}{2}))^{m}}{[(j+1)(j+2)...(j+m)]\frac{3}{2}}
\frac{(\Gamma(\frac{1}{2}))^{m-1}}{[(j+\alpha+2)(j+\alpha+3)...(j+\alpha+m)]\frac{1}{2}},
\end{align}
where $\mathscr{L}$ denotes the classical Laplace transform defined by \cite[p.2]{Sch}
\begin{align}\label{E4.40}
\mathscr{L}(f(t))(\lambda):=\int_{0}^{+\infty}e^{-\lambda t}f(t)dt, \hspace{0.25cm}\lambda>0.
\end{align}
\end{enumerate}
\end{lemma}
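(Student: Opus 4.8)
The plan is to treat the two assertions in the opposite order to which they are listed, since the second rests on the first. The Laplace-transform identity $(\ref{E4.39})$ is essentially a bookkeeping computation once the convolution has been turned into a product, so the real work sits in the pointwise estimate $(\ref{E4.38})$, which is moreover exactly what guarantees that $\mathscr{L}(\varpi_{(\alpha,m)})(j)$ makes sense (i.e. that $\int_0^{+\infty}e^{-jt}\varpi_{(\alpha,m)}(t)\,dt<+\infty$ for $j\ge 0$).

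For the estimate I would first isolate the building blocks. Writing the $k$-th bracketed factor as $G_k(t):=(\sqrt{t}e^{-kt})*(t^{-1/2}e^{-(\alpha+k)t})$ for $k=2,\dots,m$, the two exponentials combine cleanly: since $-k(t-y)-(\alpha+k)y=-kt-\alpha y$, one gets $G_k(t)=e^{-kt}\int_0^t(t-y)^{1/2}y^{-1/2}e^{-\alpha y}\,dy$. The inner integral is a Beta integral up to the weight $e^{-\alpha y}$, because $\int_0^t(t-y)^{1/2}y^{-1/2}\,dy=\mathcal{B}(\tfrac32,\tfrac12)\,t$. I would then bound $e^{-\alpha y}\le 1$ when $\alpha\ge 0$ and $e^{-\alpha y}\le e^{|\alpha|t}$ when $-1<\alpha<0$. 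The key point is that $k\ge 2$ supplies enough exponential decay to absorb this loss: since $|\alpha|<1\le k-1$, in either case $G_k(t)\le \mathcal{B}(\tfrac32,\tfrac12)\,t\,e^{-t}$ for every $t\ge 0$. This uniform rate-one bound on each block is the heart of the matter, and the range $-1<\alpha<0$ is the delicate point, for there the weight $e^{-\alpha y}$ grows and one must genuinely invoke $k\ge 2$ rather than merely $\alpha>-1$.

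With $G_k(t)\le \mathcal{B}(\tfrac32,\tfrac12)\,t\,e^{-t}$ established and the base factor equal to $t^{1/2}e^{-t}$, I would exploit monotonicity of the one-sided, nonnegative convolution $f*g(x)=\int_0^x f(x-y)g(y)\,dy$ together with the same-rate identity $(t^{a-1}e^{-t})*(t^{b-1}e^{-t})=\mathcal{B}(a,b)\,t^{a+b-1}e^{-t}$, applied repeatedly. Convolving the base with the $m-1$ blocks produces a single overall factor $e^{-t}$, a power of $t$ coming from the accumulated Beta exponents, and the constant $(\mathcal{B}(\tfrac32,\tfrac12))^{m-1}$, the remaining Beta/Gamma constants being $\le 1$ and hence harmlessly absorbed. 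This yields an estimate of the stated shape $(\mathcal{B}(\tfrac32,\tfrac12))^{m-1}\,t^{\,p}\sqrt{t}\,e^{-t}$; I would carry out the exponent bookkeeping to pin down $p$ (for $m=2$ one finds $p=m$, matching the displayed bound), the essential structural output being the global rate-one decay $e^{-t}$ with a polynomial prefactor controlled by the Beta products.

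Finally, for the Laplace transform I would invoke the convolution theorem $\mathscr{L}(f*g)=\mathscr{L}(f)\,\mathscr{L}(g)$, whose application is now legitimate precisely because the estimate makes each of the $2m-1$ elementary factors Laplace-integrable on $[0,+\infty)$. Each block is read off from $\mathscr{L}(t^{s-1}e^{-at})(\lambda)=\Gamma(s)(\lambda+a)^{-s}$: the base $\sqrt{t}e^{-t}$ contributes $\Gamma(\tfrac32)(j+1)^{-3/2}$, each $\sqrt{t}e^{-kt}$ contributes $\Gamma(\tfrac32)(j+k)^{-3/2}$, and each $t^{-1/2}e^{-(\alpha+k)t}$ contributes $\Gamma(\tfrac12)(j+\alpha+k)^{-1/2}$. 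Multiplying all contributions gathers $(\Gamma(\tfrac32))^{m}$ over $[(j+1)\cdots(j+m)]^{3/2}$ and $(\Gamma(\tfrac12))^{m-1}$ over $[(j+\alpha+2)\cdots(j+\alpha+m)]^{1/2}$, which is exactly $(\ref{E4.39})$. The only substantive input here is the justification of the product rule, i.e. the absolute-convergence (Fubini) step, which is underwritten by the estimate of the first part.
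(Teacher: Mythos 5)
Part 2 of your proposal is correct and coincides with the paper's computation: justify $\mathscr{L}(f*g)=\mathscr{L}(f)\,\mathscr{L}(g)$ by absolute convergence and read each elementary factor off from $\mathscr{L}(t^{s-1}e^{-at})(\lambda)=\Gamma(s)(\lambda+a)^{-s}$; and indeed any bound of the form $\varpi_{(\alpha,m)}(t)\le C\,t^{N}e^{-t}$ suffices for that step. The genuine gap is in part 1, and it is not deferred bookkeeping: your route cannot produce the stated exponent. Once you weaken every block to the uniform rate-one bound $G_k(t)\le\mathcal{B}(\frac{3}{2},\frac{1}{2})\,t\,e^{-t}$, discarding the factor $e^{-(k-1)t}$, the same-rate identity $(t^{a-1}e^{-t})*(t^{b-1}e^{-t})=\mathcal{B}(a,b)\,t^{a+b-1}e^{-t}$ that you yourself invoke forces each of the $m-1$ convolutions to raise the power of $t$ by exactly $2$. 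The exponent you propose to ``pin down'' is therefore $\frac{1}{2}+2(m-1)$, i.e.\ $p=2m-2$, and $p=m$ only in the single case $m=2$ that you checked. What your argument actually proves is $\varpi_{(\alpha,m)}(t)\le\mathcal{B}(\frac{3}{2},\frac{1}{2})^{m-1}\,\frac{\Gamma(3/2)}{\Gamma(2m-1/2)}\,t^{2m-2}\sqrt{t}\,e^{-t}$; for $m\ge3$ and $t\ge1$ this is strictly weaker than $(\ref{E4.38})$ (the ratio $t^{m-2}$ is unbounded), so the estimate of the lemma is not established.

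The decay you discarded is exactly what controls the exponent. The paper keeps the sharper per-block bound $\phi_{(\alpha,\ell)}(t)\le\mathcal{B}(\frac{3}{2},\frac{1}{2})\,t\,e^{(1-\ell)t}$ (its $(\ref{E4.42})$) and runs an induction on $m$: the point is that convolving an $O(t^{p}e^{-t})$ function against a factor decaying strictly faster than $e^{-t}$ costs only a multiplicative constant, not a power of $t$, since $\int_0^t(t-s)^{p}e^{-(t-s)}\,s\,e^{-(\ell-1)s}\,ds\le t^{p}e^{-t}\int_0^{+\infty}s\,e^{-(\ell-2)s}\,ds$ for $\ell\ge3$; only a block with the same rate $e^{-t}$ as the base factor can add powers of $t$. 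To repair your proof you must retain $e^{-kt}$ inside $G_k$ and argue along these lines, treating small $t$ (where the crude accumulation $t^{2m-3/2}\le t^{m+1/2}$, valid for $t\le1$, is the right estimate) and large $t$ separately. Be warned that the paper's own write-up is also careless at this point: the final inequalities of its displays $(\ref{E4.44})$--$(\ref{E4.46})$ overshoot (for instance $\int_0^{t}\sqrt{t-s}\,e^{-(t-s)}\,s\,e^{-s}\,ds=\mathcal{B}(\frac{3}{2},2)\,t^{5/2}e^{-t}$, which is not $\le t^{3/2}e^{-t}$ for large $t$), and its conclusion $(\ref{E4.47})$ even reads $t^{m-1}\sqrt{t}$ where the lemma claims $t^{m}\sqrt{t}$; but any correct repair runs through the retained per-block decay, which is precisely what your proposal gives up.
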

\begin{proof}
 For all $\ell\in\{2,3,...,m\}$, we introduce the following function
  \begin{align}\label{E4.41}
\phi_{(\alpha,\ell)}(t)=(\sqrt{t}e^{-\ell t})*(\frac{e^{-(\alpha+\ell)t}}{\sqrt{t}}).
\end{align}
Explicitly, we have
\begin{align}\label{E4.42}
\nonumber\phi_{(\alpha,\ell)}(t)&=(\sqrt{t}e^{-\ell t})*(\frac{e^{-(\alpha+\ell)t})}{\sqrt{t}})(t)\\
\nonumber&=\int_{0}^{t}\frac{\sqrt{t-s}e^{-\ell t+s-(\alpha+1)s}}{\sqrt{s}}ds\\
\nonumber&\leq e^{(1-\ell)t}\int_{0}^{t}\frac{\sqrt{t-s}}{\sqrt{s}}ds\\
\nonumber&= e^{(1-\ell)t}\int_{0}^{1}\frac{\sqrt{1-x}}{\sqrt{x}}dx,\hspace{0.25cm}x=\frac{s}{t}\\
&= {\cal{B}}(\frac{3}{2},\frac{1}{2})te^{(1-\ell)t}.
\end{align}
For $\ell\in\{2,3,...,m\}$, we can rewrite the function  $\varpi_{(\alpha,\ell)}$, defined  in $(\ref{E4.36})$, as follows
\begin{align}\label{E4.43}
\varpi_{(\alpha,\ell)}(t)=\sqrt{t}e^{-t}*\phi_{(\alpha,2)}(t)*...*\phi_{(\alpha,\ell)}(t).
\end{align}
For $\ell=2$, we obtain
\begin{align}\label{E4.44}
\nonumber\varpi_{(\alpha,2)}(t)&=\sqrt{t}e^{-t}*\phi_{(\alpha,2)}(t)\\
\nonumber&=\int_{0}^{t}\sqrt{t-s}e^{-(t-s)}\phi_{(\alpha,2)}(s)ds\\
\nonumber&\leq {\cal{B}}(\frac{3}{2},\frac{1}{2})\int_{0}^{t}\sqrt{t-s}e^{-(t-s)}se^{-s}ds\\
&\leq {\cal{B}}(\frac{3}{2},\frac{1}{2})t\sqrt{t}e^{-t}.
\end{align}
For $\ell=3$, however,  we get the following estimate
\begin{align}\label{E4.45}
\nonumber\varpi_{(\alpha,3)}(t)&=\varpi_{(\alpha,2)}(t)*\phi_{(\alpha,3)}(t)\\
\nonumber&=\int_{0}^{t}\varpi_{(\alpha,2)}(s)\phi_{(\alpha,3)}(t-s)ds\\
\nonumber&\leq [{\cal{B}}(\frac{3}{2},\frac{1}{2})]^{2}\int_{0}^{t}s\sqrt{s}e^{-s}(t-s)e^{-2t+s}ds\\
&\leq [{\cal{B}}(\frac{3}{2},\frac{1}{2})]^{2}t^{2}\sqrt{t}e^{-t}.
\end{align}
Step by step, we obtain that
\begin{align}\label{E4.46}
\nonumber\varpi_{(\alpha,m)}(t)&=\varpi_{(\alpha,m-1)}(t)*\phi_{(\alpha,m)}(t)\\
\nonumber&=\int_{0}^{t}\varpi_{(\alpha,m-1)}(s)\phi_{(\alpha,m)}(t-s)ds\\
\nonumber&\leq [{\cal{B}}(\frac{3}{2},\frac{1}{2})]^{m-2}{\cal{B}}(\frac{3}{2},\frac{1}{2})
\int_{0}^{t}s^{m-1}\sqrt{s}e^{-s}(t-s)e^{(1-m)(t-s)}ds\\
\nonumber&\leq [{\cal{B}}(\frac{3}{2},\frac{1}{2})]^{m-1}\int_{0}^{t}s^{m-1}\sqrt{s}(t-s)e^{(1-m)t+(m-2)s}ds\\
&\leq [{\cal{B}}(\frac{3}{2},\frac{1}{2})]^{m-1}t^{m-1}\sqrt{t}e^{-t}.
\end{align}
By the induction principle, we conclude that
\begin{align}\label{E4.47}
\varpi_{(\alpha,m)}(t)\leq [{\cal{B}}(\frac{3}{2},\frac{1}{2})]^{m-1}t^{m-1}\sqrt{t}e^{-t},\hspace{0.2cm}m\geq2.
\end{align}
The above inequality proves that the Laplace transform of $\varpi_{(\alpha,m)}(t)$ is well defined. By using the formula \cite[p.92]{Sch}
\begin{align}\label{E4.48}
\mathscr{L}(f*g)=\mathscr{L}(f)\mathscr{L}(g),
\end{align}
combined  with the formula \cite[p.28]{Pru}
\begin{align}\label{E4.49}
\mathscr{L}(t^{a}e^{-bt})(j)=\frac{\Gamma(a+1)}{(j+b)^{a+1}},\hspace{0.2cm} a>-1,\hspace{0.2cm} b>0,
\end{align}
we get, by a direct computation, the following required equality
\begin{align}\label{E4.50}
\nonumber \mathscr{L}(\varpi_{(\alpha,m)}(t))(j)&=\mathscr{L}(\sqrt{t}e^{-t}*\sqrt{t}e^{-2t}*...*\sqrt{t}e^{-mt})(j)
\mathscr{L}(\frac{e^{-(\alpha+2)t}}{\sqrt{t}}*...*\frac{e^{-(\alpha+m)t}}{\sqrt{t}})(j)\\
&=\frac{(\Gamma(\frac{3}{2}))^{m}}{[(j+1)(j+2)...(j+m)]\frac{3}{2}}
\frac{(\Gamma(\frac{1}{2}))^{m-1}}{[(j+\alpha+2)(j+\alpha+3)...(j+\alpha+m)]\frac{1}{2}}.
\end{align}
\end{proof}
Now, we give the proof of the proposition $(\ref{P4.2}).$
\begin{proof}
\begin{description}
  \item[(of proposition (\ref{P4.2}))]
\end{description}
In order to construct the isometry given in proposition $(\ref{P4.2})$, we consider\\
$M_{1}=\mathbb{R}_{+},$ $d\mu_{1}(x)=x^{\alpha}e^{-x}dx$ and $\mathcal{A}=\mathcal{D}_{m}^{\alpha}$ where  $<,>_{\mathcal{A}}$ is the scalar product defined in $(\ref{E4.2})$.\\
As in  $(\ref{E2.47})$, the functions defined by
\begin{align}\label{E4.51}
\varphi_{j}(x)=\sqrt{\frac{j!}{\Gamma(\alpha+j+1)}}L_{j}^{(\alpha)}(x),\hspace{0.25cm}j\in\mathbb{Z}_{+},
\end{align}
where $L_{j}^{(\alpha)}$ is the Laguerre polynomials, defined in $(\ref{E2.46})$. This constitutes an orthonormal basis of the Hilbert space $L^{2}(\mathbb{R}_{+},x^{\alpha}e^{-x}dx).$\\
An orthonormal basis of the generalized Bergman-Dirichlet space $\mathcal{D}_{m}^{\alpha}$ can be given by the following family \cite{Elh}
 \begin{align}\label{E4.52}
   \psi_{j}(z)=\left\{
                 \begin{array}{ll}
                   \sqrt{\frac{\Gamma(j+1+\alpha)}{\pi j!\Gamma(\alpha+1)}}z^{j};&j<m, \hbox{} \\\\
                   \sqrt{\frac{(j-m)!\Gamma(j-m+2+\alpha)}{\pi(j!)^{2}\Gamma(\alpha+1)}}z^{j};&j\geq m. \hbox{}
                 \end{array}
               \right.
  \end{align}
By $(ii)$ of theorem $(\ref{T2.2})$, the following series
\begin{align}\label{E4.53}
   K(z,x)&=\displaystyle{\sum_{j\in\mathbb{Z}^{+}}}\overline{\varphi_{j}(x)}\psi_{j}(z)
  \end{align}
converges $\texttt{a.e}-d\mu_{1}(x)$ for each $z\in \mathbb{C}.$\\
By replacing the functions $\varphi_{j}(x)$ and $\psi_{j}(z)$ by its expressions given in $(\ref{E4.52})$ and $(\ref{E4.53})$, the kernel $K(z,x)$ can be rewritten as
\begin{align}\label{E4.54}
  \nonumber K(z,x)&=\frac{1}{\sqrt{\pi \Gamma(1+\alpha)}}\displaystyle{\sum_{0\leq j<m}}z^{j}L_{j}^{(\alpha)}(x)\\
   \nonumber &+\frac{1}{\sqrt{\pi \Gamma(1+\alpha)}}\displaystyle{\sum_{j=m}^{+\infty}}\sqrt{\frac{(j-m)!}{j!}}
   \sqrt{\frac{\Gamma(j-m+\alpha+2)}{\Gamma(j+\alpha+1)}}z^{j}L_{j}^{(\alpha)}(x)\\
   \nonumber&=\frac{1}{\sqrt{\pi \Gamma(1+\alpha)}}\displaystyle{\sum_{0\leq j<m}}z^{j}L_{j}^{(\alpha)}(x)\\
   &+\frac{1}{\sqrt{\pi \Gamma(1+\alpha)}}\displaystyle{\sum_{k=0}^{+\infty}}\sqrt{\frac{(k)!}{(k+m)!}}
   \sqrt{\frac{\Gamma(k+\alpha+2)}{\Gamma(k+m+\alpha+1)}}z^{k+m}L_{k+m}^{(\alpha)}(x).
  \end{align}
Applying the functional equation \cite[p.2]{Mag}
  \begin{align}\label{E4.55}
  \Gamma(z+\ell)=(z)_{\ell}\Gamma(z),\hspace{0.25cm}Re(z)>0\hspace{0.25cm}and\hspace{0.25cm}\ell\in\mathbb{Z}_{+}, \end{align}
   where $(z)_{\ell}=z(z+1)...(z+\ell-1),$ for $z=k+\alpha+2$ and $\ell=m-1,$ the equation $(\ref{E4.54})$ becomes
\begin{align}\label{E4.56}
  \nonumber  K(z,x)&=\frac{1}{\sqrt{\pi \Gamma(1+\alpha)}}\displaystyle{\sum_{0\leq j<m}}z^{j}L_{j}^{(\alpha)}(x)\\
   \nonumber&+\frac{z^{m}}{\sqrt{\pi \Gamma(1+\alpha)}}\displaystyle{\sum_{k=0}^{+\infty}}\frac{1}{\sqrt{(k+1)...(k+m)}}
   \frac{1}{\sqrt{(k+\alpha+2)...(k+\alpha+m)}}z^{k}L_{k+m}^{(\alpha)}(x)\\
   &=\frac{1}{\sqrt{\pi \Gamma(1+\alpha)}}\displaystyle{\sum_{0\leq j<m}}z^{j}L_{j}^{(\alpha)}(x)+\frac{z^{m}}{\sqrt{\pi \Gamma(1+\alpha)}}S(z,x),
   \end{align}
   where $S(z,x)$ is given by
   \begin{align}\label{E4.57}
 S(z,x)&=\displaystyle{\sum_{k=0}^{+\infty}}\frac{1}{\sqrt{(k+1)...(k+m)}}
   \frac{1}{\sqrt{(k+\alpha+2)...(k+\alpha+m)}}z^{k}L_{k+m}^{(\alpha)}(x).
   \end{align}
Note that (up to our knowledge) the above series does not appear in the literature as a standard closed generating formula. To avoid this problem, we first rewrite the series $S(z,x)$ as follows
\begin{align}\label{E4.58}
 S(z,x)&=m!\displaystyle{\sum_{k=0}^{+\infty}}\frac{1}{(k+1)^{\frac{3}{2}}...(k+m)^{\frac{3}{2}}}
   \frac{1}{(k+\alpha+2)^{\frac{1}{2}}...(k+\alpha+m)^{\frac{1}{2}}}\times \frac{(k+m)!}{m!k!}z^{k}L_{k+m}^{(\alpha)}(x).
   \end{align}
Secondly, from the point $(2)$ of lemma $(\ref{L4.1})$ we have
\begin{align}\label{E4.59}
\frac{1}{(k+1)^{\frac{3}{2}}
...(k+m)^{\frac{3}{2}}(k+\alpha+2)^{\frac{1}{2}}...(k+\alpha+m)^{\frac{1}{2}}}=
(\Gamma(\frac{3}{2}))^{-m}(\Gamma(\frac{1}{2}))^{-m+1}\mathscr{L}(\omega_{(\alpha,m)}(t))(k),
\end{align}
where  $\mathscr{L}$ is the classical Laplace transform defined in $(\ref{E4.40})$.\\
Thus, the equation $(\ref{E4.58})$ becomes
\begin{align}\label{E4.60}
 S(z,x)&=m!\displaystyle{\sum_{k=0}^{+\infty}}(\Gamma(\frac{3}{2}))^{-m}(\Gamma(\frac{1}{2}))^{1-m}
 \int_{0}^{+\infty}e^{-kt}\omega_{(\alpha,m)}(t)dt\frac{(k+m)!}{m!k!}z^{k}L_{k+m}^{(\alpha)}(x).
   \end{align}
To write the above sum in a closed formula, we consider the following series
\begin{align}\label{E4.61}
 S_{a}(z,x)&=\displaystyle{\sum_{k=0}^{+\infty}}
 \int_{a}^{+\infty}e^{-kt}\omega_{(\alpha,m)}(t)dt\frac{(k+m)!}{m!k!}z^{k}L_{k+m}^{(\alpha)}(x), \hspace{0.25cm}a\geq0.
   \end{align}
 Note that
 \begin{align}\label{E4.62}
 S(z,x)=m!(\Gamma(\frac{3}{2}))^{-m}(\Gamma(\frac{1}{2}))^{1-m}S_{0}(z,x).
 \end{align}
 Now, we will prove that the series $S_{a}(z,x)$ is convergent for all $a>0$. To do so, we use the following asymptotic formula given in $(\ref{E3.43})$
 \begin{align}\label{E4.63}
 L_{j}^{(\beta)}(x)=x^{-\frac{\beta}{2}-\frac{1}{4}}O(j^{\frac{\beta}{2}-\frac{1}{4}}),
 \end{align}
  as $j\longrightarrow+\infty$, for $\frac{c}{j}\leq x\leq w$ where $c$ and $w$ are fixed positive constants. Then, for $\beta=\alpha$ and $j=k+m$ enough large $k\geq k_{0}$ and by using $(\ref{E4.63})$, we obtain the following estimate
  \begin{align}\label{E4.64}
 \mid L_{k+m}^{(\alpha)}(x)\mid\leq M_{m}\mid x\mid^{-\frac{\alpha}{2}-\frac{1}{4}} (k+m)^{\frac{\alpha}{2}-\frac{1}{4}},\hspace{0.25cm}k\geq k_{0},
 \end{align}
 where $M_{m}$ is a positive constant.\\
 Next, taking $p\in\mathbb{Z}_{+}$ enough large and by using $(\ref{E4.64})$, we obtain the following inequality
\begin{align}\label{E4.65}
   \nonumber \mid\sum_{k=0}^{p}(ze^{-t})^{k}\frac{(k+m)!}{k!}L_{k+m}^{(\alpha)}(x)\mid&\leq \sum_{k=0}^{k_{0}-1}\frac{(k+m)!}{k!}\mid L_{k+m}^{(\alpha)}(x)\mid\\
   &+M_{m}\mid x\mid^{-\frac{\alpha}{2}-\frac{1}{4}}\sum_{k=k_{0}}^{+\infty}e^{-ak}\frac{(k+m)!}{k!}
   (k+m)^{\frac{\alpha}{2}-\frac{1}{4}}.
   \end{align}
Using the asymptotic formula
\begin{align}\label{E4.66}
\frac{\Gamma(z+a)}{\Gamma(z)}\sim z^{a}\hspace{0.2cm} when \hspace{0.2cm}\mid z\mid\longrightarrow+\infty,\hspace{0.2cm}a\in\mathbb{C},
\end{align}
for $z=k+1$ and $a=m$, we get the following behavior
\begin{align}\label{E4.67}
\frac{(k+m)!}{k!}=\frac{\Gamma(k+1+m)}{\Gamma(k+1)}\sim (1+k)^{m},\hspace{0.2cm}when\hspace{0.2cm}k\longrightarrow+\infty.
\end{align}
Returning back to $(\ref{E4.65})$ and using the above behavior, we get the following estimate
\begin{align}\label{E4.68}
   \nonumber \mid\sum_{k=0}^{p}(ze^{-t})^{k}\frac{(k+m)!}{k!}L_{k+m}^{(\alpha)}(x)\mid&\leq \sum_{k=0}^{k_{0}-1}\frac{(k+m)!}{k!}\mid L_{k+m}^{(\alpha)}(x)\mid\\
   \nonumber&+\tilde{M}_{m}\mid x\mid^{\frac{\alpha}{2}-\frac{1}{4}}\sum_{k=k_{0}}^{+\infty}e^{-ak}
   (k+1)^{m}(k+m)^{\frac{\alpha}{2}-\frac{1}{4}}\\
   &=C_{m,x}<+\infty,
   \end{align}
where $\tilde{M}_{m}$ and $C_{m,x}$ are positive constants.\\
Then, by using the Lebesgue dominated convergence theorem, we can interchange the sum and the integral given by $(\ref{E4.61})$
 in which we consider $a>0$. Then, we obtain the following equality
\begin{align}\label{E4.69}
S_{a}(z,x)=\int_{a}^{+\infty}\sum_{k=0}^{+\infty}(ze^{-t})^{k}
 \frac{(k+m)!}{k!m!}L_{k+m}^{(\alpha)}(x)\omega_{(\alpha,m)}(t)dt.
  \end{align}
 Next, applying the generating function \cite[p.104]{Mou}
\begin{align}\label{E4.70}
 \sum_{k=0}^{+\infty}\frac{(k+m)!}{m!k!}L^{(\beta)}_{(k+m)}(y)s^{k}=
(1-s)^{-\beta-m-1}\exp(\frac{-ys}{1-s})L^{(\beta)}_{(m)}(\frac{y}{1-s}),
\end{align}
for $\beta=\alpha$, $y=x$ and $s=ze^{-t}$, we find
 \begin{align}\label{E4.71}
 S_{a}(z,x)=\int_{a}^{+\infty}\omega_{(\alpha,m)}(t)(1-ze^{-t})^{-\alpha-m-1}
 \exp(\frac{-xze^{-t}}{1-ze^{-t}})L_{m}^{(\alpha)}(\frac{x}{1-ze^{-t}})dt.
 \end{align}
 We rewrite the above function $S_{a}(z,x)$ as
 \begin{align}\label{E4.72}
  S_{a}(z,x)=\int_{0}^{+\infty}1_{[a,+\infty[}\omega_{(\alpha,m)}(t)(1-ze^{-t})^{-\alpha-m-1}
 \exp(\frac{-xze^{-t}}{1-ze^{-t}})L_{m}^{(\alpha)}(\frac{x}{1-ze^{-t}})dt,
  \end{align}
 where $1_{[a,+\infty[}$ is the characteristic function associated with the set $[a,+\infty[\subset\mathbb{R}_{+}$.\\
 By the continuity of the functions $s\longmapsto e^{s}$ and $s\longmapsto L_{m}^{(\alpha)}(s)$, we can prove that the involved function in the right hand side of $(\ref{E4.72})$ satisfies the following estimated form
 \begin{align}\label{E4.73}
  \mid 1_{[a,+\infty[}\omega_{(\alpha,m)}(t)(1-ze^{-t})^{-\alpha-m-1}
 \exp(\frac{-xze^{-t}}{1-ze^{-t}})L_{m}^{(\alpha)}(\frac{x}{1-ze^{-t}})\mid\leq C_{z,x}^{m}t^{m}\sqrt{t}e^{-t},\hspace{0.25cm}C_{z,x}^{m}>0,
 \end{align}
  where we have used the following inequality
  \begin{align}\label{E4.74}
  \omega_{(\alpha,m)}(t)\leq ({\cal{B}}(\frac{3}{2},\frac{1}{2}))^{m-1}t^{m}\sqrt{t}e^{-t},
  \end{align}
  given in the first point of the lemma $(\ref{L4.1})$.\\
  Finally, applying the Lebesgue dominated convergence theorem, we get
\begin{align}\label{E4.75}
  \lim_{a\longrightarrow 0}S_{a}(z,x)=\int_{0}^{+\infty}\omega_{(\alpha,m)}(t)(1-ze^{-t})^{-\alpha-m-1}
 \exp(\frac{-xze^{-t}}{1-ze^{-t}})L_{m}^{(\alpha)}(\frac{x}{1-ze^{-t}})dt.
  \end{align}
Hence, we have
 \begin{align}\label{E4.76}
 S(z,x):=S_{0}(z,x)=\int_{0}^{+\infty}\omega_{(\alpha,m)}(t)(1-ze^{-t})^{-\alpha-m-1}
 \exp(\frac{-xze^{-t}}{1-ze^{-t}})L_{m}^{(\alpha)}(\frac{x}{1-ze^{-t}})dt.
  \end{align}
Returning back to the equation $(\ref{E4.56})$, we get the desired kernel
\begin{align}\label{E4.77}
  \nonumber K(z,x)&=\frac{1}{\sqrt{\pi \Gamma(1+\alpha)}}\displaystyle{\sum_{0\leq j<m}}z^{j}L_{j}^{(\alpha)}(x)\\
   &\hspace{-1.5cm}+\frac{m!z^{m}(\Gamma(\frac{3}{2}))^{-m}\Gamma(\frac{1}{2}))^{1-m}}{\sqrt{\pi \Gamma(1+\alpha)}}
   \int_{0}^{+\infty}\omega_{(\alpha,m)}(t)(1-ze^{-t})^{-\alpha-m-1}
 \exp(\frac{-xze^{-t}}{1-ze^{-t}})L_{m}^{(\alpha)}(\frac{x}{1-ze^{-t}})dt.
   \end{align}
   This ends the proof.
\end{proof}
\begin{remark}
By considering the following natural isometry
\begin{align}\label{E4.78}
		\nonumber T_{\alpha}:\hspace{0.25cm}L^{2}(\mathbb{R}_{+},dx)&\longrightarrow L^{2}(\mathbb{R}_{+},x^{\alpha}e^{-x}dx)\\
		&f\longmapsto T_{\alpha}(f)(x)=x^{\frac{-\alpha}{2}}e^{\frac{x}{2}}f(x),
\end{align}
we obtain, in a canonical way from the last proposition, the following isometry
\begin{align}\label{E4.79}
 \nonumber L^{2}(\mathbb{R}_{+},& dx)\longrightarrow \mathcal{D}_{m}^{\alpha}\\
 &f\longmapsto B[f](z):=\int_{0}^{+\infty}K(z,x)f(x)dx,
 \end{align}
  where the kernel $K(z,x)$ is given by
  \begin{align}\label{E4.80}
  \nonumber K(z,x)&=\frac{x^{\frac{-\alpha}{2}}e^{\frac{x}{2}}}{\sqrt{\pi \Gamma(1+\alpha)}}\displaystyle{\sum_{0\leq j<m}}z^{j}L_{j}^{(\alpha)}(x)\\
   &\hspace{-1.8cm}+\frac{m!z^{m}(\Gamma(\frac{3}{2}))^{-m}\Gamma(\frac{1}{2}))^{1-m}x^{\frac{-\alpha}{2}}
   e^{\frac{x}{2}}}{\sqrt{\pi \Gamma(1+\alpha)}}
   \int_{0}^{+\infty}\omega_{(\alpha,m)}(t)(1-ze^{-t})^{-\alpha-m-1}
 \exp(\frac{-xze^{-t}}{1-ze^{-t}})L_{m}^{(\alpha)}(\frac{x}{1-ze^{-t}})dt.
   \end{align}
\end{remark}
\section*{Acknowledgements} The assistance of the members of the seminary "Spectral theory and quantum dynamics" is gratefully acknowledged, specially Adil Belhaj for his helpful discussion.

\end{document}